\documentclass{article}
\usepackage{amsmath,amssymb,amsthm,mathtools}
\usepackage{xspace}

\theoremstyle{plain}
\newtheorem{theorem}{Theorem}
\newtheorem{lemma}{Lemma}
\newtheorem{proposition}{Proposition}
\newtheorem{corollary}{Corollary}
\theoremstyle{definition}

\IfFileExists{latex_macros.sty}{
  \usepackage{latex_macros}
}{}
\providecommand{\real}{\mathbb{R}}
\providecommand{\Exp}{\mathbb{E}}
\providecommand{\IdMat}{I}

\providecommand{\inprod}[2]{\left\langle #1,#2\right\rangle}
\providecommand{\mycomment}[1]{}

\usepackage{fullpage}
\usepackage{xcolor}
\IfFileExists{fontawesome5.sty}{
  \usepackage{fontawesome5}

}{

}

\usepackage[
    colorlinks=true,
    linkcolor=magenta,
    citecolor=blue,
    urlcolor=blue
]{hyperref}

\usepackage[capitalize,noabbrev,nameinlink]{cleveref}
\crefname{assumption}{Assumption}{Assumptions}
\Crefname{assumption}{Assumption}{Assumptions}

\usepackage{enumitem}

\newcommand{\Xvar}{\ensuremath{X}}
\newcommand{\usedim}{d}

\newcommand{\defn}{\coloneqq}
\newcommand{\Normal}{N}

\newcommand{\Gvar}{G}
\newcommand{\Yvar}{Y}

\newcommand{\Bvar}{B}
\newcommand{\lam}{\lambda}

\newcommand{\Ylam}{\Yvar_\lam}

\newcommand{\Denoise}{m}
\newcommand{\Info}{\ShanInfo}

\newcommand{\Jinfo}{\mathsf{G}}
\newcommand{\Hinfo}{\mathsf{H}}
\newcommand{\Dinfo}{\mathsf{D}}
\newcommand{\Escore}{\mathsf{E}_{\scaleto{\operatorname{sc}}{3pt}}}

\newcommand{\EulKL}{\Gamma_{\scaleto{\operatorname{Eul}}{4pt}}}

\newcommand{\Cov}{\operatorname{cov}}

\providecommand{\Cmat}{\ensuremath{\mathbf{C}}}
\newcommand{\Amat}{\ensuremath{\mathbf{A}}}
\newcommand{\Var}{\operatorname{Var}}
\newcommand{\Tr}{\operatorname{tr}}

\newcommand{\OpNorm}[1]{\left\|#1\right\|_{\mathrm{op}}}
\newcommand{\KL}{\ensuremath{D_{\scaleto{\operatorname{KL}}{4pt}}}}

\newcommand{\Score}{\mathsf{s}}
\newcommand{\scorehat}{\widehat{\score}}
\newcommand{\score}{\Score}

\newcommand{\Hblk}{\mathsf H}
\newcommand{\ValFun}{\mathsf{V}}
\newcommand{\gfun}{\mathsf g}
\newcommand{\hfun}{\mathsf h}

\newcommand{\rhat}{\widehat r}

\newcommand{\Zvar}{Z}
\newcommand{\Qhat}{Q}
\newcommand{\Nscore}{N}
\newcommand{\NscoreTot}{\ensuremath{\Nscore_{\scaleto{\mathrm{tot}}{3pt}}}}
\newcommand{\NscoreInit}{\ensuremath{\Nscore_{\scaleto{\mathrm{init}}{3pt}}}}
\newcommand{\Ninit}{\ensuremath{\NscoreInit( \tfrac{\varepsilon}{2}; \Tfinal)}}
\newcommand{\Nwainwright}{\ensuremath{N_{\scaleto{\mbox{Wai}}{3pt}}}}
\newcommand{\NSLC}{N_{\scaleto{\mbox{SLC}}{4pt}}}

\newcommand{\Zhat}{\ensuremath{\widehat{Z}}}

\newcommand{\ShannonInfo}{\ensuremath{\operatorname{Info}}}
\newcommand{\ShanInfo}{\ShannonInfo}

\newcommand{\lamzero}{\ensuremath{\lam_0}}
\newcommand{\lamone}{\ensuremath{\lam_1}}

\newcommand{\Xtil}{\ensuremath{\widetilde{X}}}

\newcommand{\cov}{\Cov}

\providecommand{\Order}{\mathcal{O}}

\newcommand{\Blam}{\ensuremath{B_\lam}}

\newcommand{\Prob}{\ensuremath{\mathbb{P}}}
\newcommand{\Qprob}{\ensuremath{\mathbb{Q}}}

\newcommand{\Yhat}{\widehat{Y}}

\newcommand{\YhatS}[1]{\Yhat_{#1}}

\renewcommand{\var}{\ensuremath{\Var}}

\newcommand{\CovZ}{\boldsymbol{\Sigma}_Z}

\newcommand{\SIEuler}{SI-Euler\xspace}
\newcommand{\SIEulerRho}{SI-Euler$(\rho)$\xspace}

\newcommand{\Zset}{\ensuremath{\mathcal{Z}}}

\newcommand{\ShanDist}{\ensuremath{\mathcal{R}}}

\newcommand{\Gauss}{\ensuremath{\mathbb{G}}}

\newcommand{\Ent}{\ensuremath{\operatorname{Ent}}}
\newcommand{\polylog}{\operatorname{polylog}}

\newcommand{\Pcar}{C_{\scaleto{P}{4pt}}}

\newcommand{\Poincare}{Poincar\'{e}\xspace}

\newcommand{\Sprob}{\ensuremath{\mathbb{S}}}
\newcommand{\Partition}{\ensuremath{\mathcal{P}}}

\newcommand{\PartH}{\ensuremath{\PartComp(\Partition)}}
\newcommand{\PartComp}{\mathsf{C}_{\scaleto{\operatorname{\dgc}}{4pt}}}
\newcommand{\PartCompHat}{\widehat{\mathsf{C}}_{\scaleto{\operatorname{\dgc}}{4pt}}}

\newcommand{\Len}{\ensuremath{\operatorname{Len}}}

\newcommand{\Xhat}{\widehat{X}}
\newcommand{\DenTime}{\mu}

\newcommand{\iter}{j}
\newcommand{\MaxIter}{N}

\newcommand{\bind}{k}
\newcommand{\Btot}{K}

\newcommand{\block}{b}
\newcommand{\jind}{\iter}
\newcommand{\Ptrue}{P}

\newcommand{\Tinit}{\ensuremath{\delta}}
\newcommand{\Tfinal}{\ensuremath{T}}

\newcommand{\dyadic}{\ensuremath{v}}

\newcommand{\DyaTot}{\ensuremath{L}}

\newcommand{\Hupper}{\overline{\Hinfo}}
\newcommand{\Hhat}{\widehat{\Hinfo}}

\newcommand{\eigval}{\ensuremath{\lambda}}

\newcommand{\dgclong}{denoising growth complexity\xspace}
\newcommand{\dgc}{\textsf{DGC}\xspace}

\newcommand{\numobs}{\ensuremath{m}}
\newcommand{\Zup}[1]{\ensuremath{\Zvar^{(#1)}}}

\newcommand{\Qfunup}[1]{\Qfun^{(#1)}}

\newcommand{\HackErr}{\widehat{r}_\numobs(\eta)}
\newcommand{\NewHackErr}{\widehat{r}_{\bind, \numobs}(\eta/\Btot)}

\newcommand{\Sinfo}{\ensuremath{\mathsf{S}}}

\newcommand{\dmin}{\ensuremath{d_{\scaleto{\operatorname{eff}}{4pt}}}}
\newcommand{\Mani}{\ensuremath{\mathcal{M}}}
\newcommand{\mdim}{m}
\newcommand{\Wass}{\ensuremath{\mathcal{W}}}
\newcommand{\Ztil}{\ensuremath{\widetilde{Z}}}

\newcommand{\Vhat}{\ensuremath{\widehat{V}}}
\newcommand{\Nhat}{\ensuremath{\widehat{N}}}

\newcommand{\Nideal}{\ensuremath{N_{\scaleto{\operatorname{ideal}}{4pt}}}}

\newcommand{\qdens}{\mathsf{q}}

\newcommand{\DenTimeHat}{\widehat{\DenTime}}
\newcommand{\Hmod}{\ensuremath{\widetilde{\Hinfo}}}
\newcommand{\DinfoTil}{\ensuremath{\widetilde{\Dinfo}}}

\newcommand{\rhohat}{\ensuremath{\widehat{\rho}}}
\newcommand{\PartHfine}{\mathsf{P}_{\scaleto{\mathrm{\dgc}}{4pt}}}

\renewcommand{\Normal}{\ensuremath{\mathcal{N}}}
\newcommand{\DenError}{\ensuremath{\mathcal{E}}}

\newcommand{\Jtot}{J}

\newcommand{\fdens}{\ensuremath{\mathsf{f}}}
\renewcommand{\Len}{\ensuremath{L}}
\newcommand{\UniK}{\ensuremath{\Uni_\Btot}}
\newcommand{\Uni}{\ensuremath{\mathcal{U}}}
\newcommand{\Lip}{\ensuremath{\operatorname{Lip}}}

\makeatletter
\long\def\@makecaption#1#2{
        \vskip 0.8ex
        \setbox\@tempboxa\hbox{\small {\bf #1:} #2}
        \parindent 1.5em  
        \dimen0=\hsize
        \advance\dimen0 by -3em
        \ifdim \wd\@tempboxa >\dimen0
                \hbox to \hsize{
                        \parindent 0em
                        \hfil 
                        \parbox{\dimen0}{\def\baselinestretch{0.96}\small
                                {\bf #1.} #2
                                } 
                        \hfil}
        \else \hbox to \hsize{\hfil \box\@tempboxa \hfil}
        \fi
        }
\makeatother

\newenvironment{researchquestion}
  {\begin{center}\begin{minipage}{0.98\textwidth}}
  {\end{minipage}\end{center}}
  
\begin{document}

\begin{center}
  {\Large\bfseries
    Denoising growth complexity: \\    
    Data geometry and certified schedules for diffusion sampling\\
}

\vspace*{0.3in}

  \begin{tabular}{c}
    Martin J. Wainwright \\ \texttt{mjwain@mit.edu}
  \end{tabular}

  \vspace*{0.2in}    
  \begin{tabular}{c}
    Lab for Information and Decision Systems \\
    Statistics and Data Science Center \\       
    EECS and Mathematics, \\
    Massachusetts Institute of Technology 
  \end{tabular}
  
  \vspace*{0.25in}
  \today
  \vspace*{0.25in}

  \begin{abstract}
    Two central challenges in diffusion-based sampling are the
    theoretical one of understanding their remarkable effectiveness
    even in high-dimensional settings, and the practical one of
    designing algorithms with certified performance guarantees.  We
    show that these questions are intimately connected via the
    \emph{denoising growth complexity} ($\mathsf{DGC}$).  It is a
    geometric measure defined by a log-time weighted integral of the
    derivative of the denoising mean-squared error along the Gaussian
    heat flow.  We show how the $\mathsf{DGC}$ increments lead to a
    simple and explicit bound on the KL error of an Euler scheme
    applied to the stochastic innovations representation.  The bound
    is local along the path: each step is controlled by the
    corresponding \dgc increment and its relative stepsize.  This
    structure allows us to derive KL sampling guarantees for optimized
    stepsize schedules, both in a simpler single-block setting and in a
    more refined $K$-block setting. The \dgc function has a
    natural martingale structure, which we exploit to develop fully
    data-certified versions of these algorithms.  It also admits
    information-theoretic upper bounds in terms of covariance, rate
    distortion, metric entropy, and the Poincar\'e constant, thereby
    recovering and sharpening a range of existing diffusion-sampling
    guarantees, as well as giving new results.  In log heat-time, the
    fine partition limit is governed by an integral involving the square
    root of the $\mathsf{DGC}$ density, whereas a single-block schedule depends
    on its ordinary integral.  This comparison precisely characterizes
    when adaptation to data geometry yields substantial computational
    gains, including logarithmic-to-constant separations for simple
    Gaussian mixture models.
  \end{abstract}
\end{center}



\section{Introduction}

Diffusion-based methods have proven to be remarkably effective for
sampling from high-dimensional distributions, and they now underpin
powerful methods for generating images, audio, and other structured
data~\cite{RomEtAl22,CroEtAl23,YanEtAl25,CheEtAl24}.  Diffusion
samplers are built around some type of forward Gaussian noising
process~\cite{SohEtAl15,SonErmo19,HoEtAl20,SonEtAl21}. In continuous
time, the forward process is described by a stochastic differential
equation (SDE), and sampling can be performed by (approximately)
simulating the associated reverse-time SDE~\cite{HauPard86,And82}.  By
the Tweedie--Miyasawa formula (e.g.,~\cite{Rob56,Miy61,RapSim11}), the
score functions that govern the backward dynamics are determined by
the denoising functions, and so can be estimated via regression.
Given such (estimated) score functions, there are a variety of methods
for traversing the backward path, including stochastic samplers that
discretize the SDE
(e.g.,~\cite{HoEtAl20,SonEtAl21,LeeEtAl22,LiEtAl23,LeeEtAl23,CheEtAl23c,BenEtAl24});
and deterministic samplers that track the ordinary differential
equation (ODE) defining the probability flow
(e.g.,~\cite{SonEtAl21,BenEtAl23,AlbEtAl23,CheEtAl23e,CaiLi25,GatEtAl26}).

This paper is motivated by two questions associated with the analysis
and design of diffusion samplers.  A striking property of these methods
is their \emph{unreasonable effectiveness} relative to what generic
worst-case complexity bounds suggest.  This phenomenon appears to be
closely tied to structure in real-world distributions.  Although this
connection has been studied in various specific settings, the broader
question remains:
\begin{researchquestion} {\bf{Q1:}}
  Can the performance of diffusion sampling be explained and
  quantified, in some generality, by a measure tied to data geometry?
\end{researchquestion}
At the same time, a major practical challenge is to design sampling
procedures that have: (i) the best possible computational efficiency;
and (ii) outputs equipped with \emph{certified guarantees}.  Thus, we
are led to our second question:
\begin{researchquestion}
{\bf{Q2:}} Is it possible to exploit a diffusion-based measure of data
geometry to design and certify practical sampling schemes?
\end{researchquestion}
In this paper, we provide affirmative answers to both of these
questions, in particular via the notion of \emph{\dgclong}.


\subsection{Our contributions}

We now provide a high-level summary of the contributions of this
paper.  Focusing on a variant of a standard Euler discretization, our
main result (\Cref{ThmMaster}) gives an explicit upper bound on the KL
discretization error, which exposes the fundamental role of the
\dgclong.  Letting $t \mapsto \hfun(t)$ denote the mean-squared
denoising error along the heat path, the \dgc function $\Hinfo$ arises
as a log-time weighted integral of the derivative $\hfun'$; see
equation~\eqref{EqnDefnHinfo} for a precise definition.  As we show
in~\Cref{SecPropDGC}, the function $\Hinfo$ can be expressed in
information-theoretic terms, and its log-time derivative reveals the
geometry of the data distribution along the heat path.
See~\Cref{FigQplots,FigHier} for some representative plots.  Notably,
the proof of~\Cref{ThmMaster}, given in~\Cref{SecProofThmMaster},
requires fewer than three pages of elementary analysis.  Despite
this simplicity, we show that it recovers and sharpens a large number
of existing results on diffusion samplers.

The remainder of the paper develops various consequences
of~\Cref{ThmMaster} for different stepsize schedules.  We begin
in~\Cref{SecSingle} with the simplest single-block setting, where a
single geometric multiplier specifies stepsizes along the entire path.
\Cref{CorNewSingle}, which follows as an immediate consequence
of~\Cref{ThmMaster}, gives a precise upper bound on the KL sampling
error that depends on the \dgc function evaluated over the full path.
Notably, by exploiting the martingale structure of the \dgc function,
we also develop a fully data-certified version of this result, one
which uses trajectories of the heat path to estimate the \dgc
increments.  See~\Cref{SecDataSingle} and in
particular~\Cref{PropDataSingle} for details of this certification
procedure.  Next, in~\Cref{SecInfoUpper}, we develop various
model-specific consequences of our single-block theory, and show how
to recover and sharpen various results from past work, including sharp
dimension scaling; dimension-independent results in certain cases;
sharp dependence on support geometry and intrinsic dimensions; results
for manifold and mixture models in both exact and approximate
settings; and a novel result with logarithmic dependence on the
\Poincare constant. See~\Cref{SecRelated} for discussion of
connections to related work.

In~\Cref{SecTracking}, we harness the full power of~\Cref{ThmMaster}
by analyzing multi-block schedules, as well as the fine partition
limit thereof.  Our second main result (\Cref{ThmTrackMulti}) gives
explicit choices of geometric multipliers over a $\Btot$-partition
that achieve near-optimal iteration complexity for
$\varepsilon$-accurate sampling in KL divergence.  Notably, this
scheme remains amenable to data-certified guarantees;
in~\Cref{CorCertifiedMulti}, we specify a fully certified way of
choosing data-dependent multipliers that achieve the same guarantee up
to a factor of two, along with perturbations due to data
certification.  In~\Cref{SecTrackingBlock}, we show how to use dynamic
programming (DP) to construct optimal $\Btot$-block partitions; again,
the DP depends on \dgc increments that can be estimated via the
methods and guarantees of~\Cref{PropDataSingle}.  Finally,
in~\Cref{SecFine}, we analyze the fine partition limit, and prove a
partition-complexity sandwich (\Cref{LemFinePartSandwich}) that
quantifies the relation between a single-block scheme, its
$\Btot$-block refinement, and the limiting object. The latter object
depends on the log-time \dgc density $\qdens$, and it gives a crisp
characterization of when and how multi-block schemes lead to
    speed-ups.  In particular, the single-block iteration complexity
depends on an integral of $\qdens$, whereas the fine partition limit
depends on an integral involving $\sqrt{\qdens}$.  This
characterization, when coupled with the connection between the \dgc
density and data geometry in~\Cref{FigQplots}, shows the fundamental
link between questions {\bf{Q1}} and {\bf{Q2}} posed in the
introduction.


\subsection{Related work}
\label{SecRelated}

To put our results in context, let us now describe how they relate to
past work on diffusion sampling.  The literature now includes a wide
range of theoretical results, applying to both stochastic (SDE-based)
samplers
(e.g.,~\cite{LeeEtAl22,LiEtAl23,LeeEtAl23,CheEtAl23c,CheEtAl23a,BenEtAl24})
as well as (ODE or flow-based) deterministic ones
(e.g.,~\cite{SonEtAl21,AlbEtAl23,BenEtAl23,CheEtAl23e,CaiLi25,GatEtAl26}).
For either type of sampler, the dependence on the ambient dimension
$\usedim$ is now well-understood.  Initial results gave guarantees
with superlinear but polynomial dependence on
dimension~\cite{CheEtAl23a,LiEtAl23}, which were then refined to a
sharper linear dependence, up to logarithmic factors, in later
work~\cite{BenEtAl24,LiEtAl24}.  Our results show that the single
block scheme recovers linear scaling without any logarithmic overhead
via a covariance-based upper bound (\Cref{PropCovariance}) on the \dgc
complexity.  A notable exception to linear scaling is the recent
paper~\cite{GatEtAl26}, which analyzes a high-accuracy deterministic
sampler with only logarithmic dependence on dimension; under the same
assumptions, our results show that a stochastic Euler scheme, while
not giving the polylogarithmic $(1/\varepsilon)$-dependence of a
high-accuracy sampler, is in fact \emph{dimension-independent}.  See
the discussion following~\Cref{PropCovariance} for further details.

There is also a rich set of guarantees for diffusion samplers applied
to problems with some form of lower \emph{intrinsic dimension},
including manifold structures and other related conditions
(e.g.,~\cite{Bor22,Pid22,BorEtAl22,CheEtAl23d,AzaEtAl24}).  Various
researchers have shown that diffusion samplers can adapt to a lower
intrinsic dimension $k$, with the initial guarantees having
superlinear polynomial dependence on $k$~\cite{LiEtAl24b,AzaEtAl24},
and then refined to linear dependence, up to logarithmic factors, by
later work~\cite{PotEtAl24,HuaEtAl24a}.  Our results also capture this
linear scaling without any logarithmic overhead, and have natural
extensions to data distributions that lie close to a structure with
low intrinsic dimension.  Other work gives guarantees sampling
\emph{Gaussian mixture models} using diffusion methods
(e.g.,~\cite{ShaEtAl23,GatEtAl24,CheKonSha25,LiCaiWei26}).  Our
results capture the dimension-independent scaling given in the
paper~\cite{LiCaiWei26}, as well as logarithmic dependence on the
number of mixture components.  Within the framework of this paper, all
of these claims are captured via a general result
(\Cref{PropGoodRateDistor}) that connects the \dgc complexity to
low-distortion quantizers, and thereby to the Shannon rate-distortion
function of the data source.

Our work connects the typical heat-time analysis of diffusion samplers
with its equivalent parameterization in terms of stochastic
localization~\cite{Eld13,Mon23}.  In particular, the \dgc complexity
arises naturally from the Euler--Maruyama discretization of a
stochastic innovations SDE, specified as
equation~\eqref{EqnInnovationsSDE}.  This particular SDE, in turn,
arises from an application of classical nonlinear filtering
theory~\cite{KalStr68,FujKalKun72} to the stochastic localization
path.  While we focus only on the Euler discretization, a growing body
of work analyzes accelerated discretizations of reverse diffusion SDEs
including higher-order approximation, randomized-midpoint, and
stochastic Runge--Kutta
methods~\cite{LiEtAl24Accel,LiCai24,LiJia25,LiEtAl25Higher}.  Under
differing regularity assumptions and error metrics, these methods
improve the polynomial dependence on the target accuracy beyond the
$1/\varepsilon$ scaling of Euler-based guarantees.  A very recent line
of work~\cite{HuaEtAl24b,CheEtAl26,GatEtAl26,Wai26} has focused on
high-accuracy guarantees for diffusion samplers, meaning methods that
guarantee $\varepsilon$-accuracy with iteration complexity scaling
polylogarithmically in $1/\varepsilon$.  An interesting open question
is whether high-accuracy solvers can be adapted to the stochastic
innovations SDE that underlies our approach, thereby yielding refined
procedures that adapt to the data geometry and can be certified.

Portions of our analysis make use of the I--MMSE integral
identity~\cite{GuoEtAl05,GuoEtAl11} that relates the denoising
mean-squared error (MSE) function $\hfun$ to the mutual
information. In this way, it has connections to the
paper~\cite{ReePfi25}, which analyzes diffusions using
information-theoretic methods and the I--MMSE identity.  The \dgc
complexity identified in our work relies instead on a weighted
integral of the \emph{derivative} $\hfun'$ of the denoising MSE, which
arises from a locally multiplicative upper bound on the KL
discretization error (see~\Cref{LemGeoOneStep}).  Using the I-MMSE
equivalence, the \dgc function can also be related to mutual
information increments (see~\Cref{SecHinfoInfoRep}).  Hence, via this
connection, our guarantee for the special single-block scheme
(\Cref{CorNewSingle}) is connected to the guarantees in the
paper~\cite{ReePfi25}.  It should be noted that the fully general form
of~\Cref{ThmMaster} decomposes the full path into local increments of
the \dgc function; this additivity property leads to schemes that
outperform any single-block method.

Our multi-block results in~\Cref{SecTracking} provide a principled
basis for designing practical and fully certified stepsize schedules
that adapt to data geometry.  Some practical stepsize selection
schemes include fixed parametric rules, such as power-law grids in the
noise standard deviation~\cite{KarEtAl22}, or grids that are uniform
in solver-adapted coordinates such as log-SNR~\cite{LuEtAl22}.  An
evolving line of work~\cite{WatEtAl21,SabFidKre24,TonEtAl25} has
introduced solver- and model-specific optimization criteria, including
those based on ELBO bounds or path space KL divergence, which lead to
numerical procedures. In contrast, our schedules are derived
analytically from the \dgc profile itself, and come with direct
guarantees for the resulting KL sampling error of the output.
Moreover, we give data-dependent estimators for stepsize choices with
rigorous guarantees (see~\Cref{PropDataSingle}, as well
as~\Cref{CorCertifiedMulti}).


\section{General guarantee via \dgclong}
\label{SecGeneral}

In this paper, we analyze diffusion samplers that are based on the
\emph{Gaussian heat flow} $\{X_t \}_{t \geq 0}$, also referred to as
the forward heat path.  It is generated via $X_t = \Zvar + B_t$, where
$\Zvar$ is the target variable from which we wish to sample, and
$\{B_t\}_{t \geq 0}$ is a Brownian motion.  This choice is also known
as the variance-exploding parameterization, since the variance of
$X_t$ grows in time.  Sampling algorithms involve traversing the
reverse path, using estimates of the score function $\score_t(x_t)
\defn \nabla \log p_{t}(x_t)$ along the heat path.  By the
Tweedie--Miyasawa formula~\cite{Rob56,Miy61,RapSim11,Efr11}, this
score function is connected to the optimal denoiser; see
equation~\eqref{EqnTweedie} below.

A typical algorithm is based on traversing the heat path, moving from
a larger temperature $\Tfinal$ backwards to an initial $\Tinit \in (0,
\Tfinal)$.  In this paper, we analyze the following simple Euler-type
scheme.  Fix an arbitrary grid of distinct points on the interval
$[\Tinit, \Tfinal]$, moving backwards from $\Tfinal$ to $\Tinit$:

\begin{align}
\label{EqnGrid}  
  \Tinit = t_{\MaxIter} < t_{\MaxIter - 1} < \ldots < t_1 < t_0 =
  \Tfinal.
\end{align}
Beginning with some initial point $\Xhat_0 \sim \Qprob_\Tfinal$, we
analyze the following discrete-time updates:
\mygraybox{ {\bf{\SIEuler-scheme:}} For $\iter = 0, \ldots, \MaxIter -
  1$:
\begin{align}
\label{EqnHeatAlgorithm}
  \Xhat_{\iter +1} & = \Xhat_\iter + (t_\iter - t_{\iter+1})
  \score_{t_\jind}(\Xhat_{\jind}) + \sqrt{(t_\iter - t_{\iter+1})
    \frac{t_{\iter+1}}{t_\iter}} W_\iter \qquad \mbox{where $W_\iter \sim
    \Normal(0, \IdMat)$.}
\end{align}
}
To clarify the terminology \SIEuler, these updates are closely
related, but subtly distinct from the standard Euler--Maruyama
discretization of the reverse heat SDE.  The difference is the
presence of the factor $t_{\iter+1}/t_\iter$ multiplying the additive
Gaussian noise $W_\iter$.  As our analysis makes clear, the
algorithm~\eqref{EqnHeatAlgorithm} corresponds to the Euler--Maruyama
discretization of a stochastic innovations (SI) SDE from nonlinear
filtering (see equation~\eqref{EqnInnovationsSDE}), and the additional
factor arises from the conversion between the standard heat path and
the stochastic innovations path.  While we have described the
algorithm that uses exact score functions, our analysis also extends
to approximate score functions $\{\scorehat_t \}_{t \geq 0}$; see
equation~\eqref{EqnScoreError} below.


\subsection{Main guarantee}
\label{SecMain}

We now present the main guarantee of this paper, from which a number
of interesting consequences follow.  First, recall that by the
Tweedie--Miyasawa formula~\cite{Rob56,Miy61,RapSim11,Efr11}, the score
function $\score_t$ is connected to estimation of $\Zvar$ based on
$X_t$ via the relation
\begin{align}
\label{EqnTweedie}  
\score_t(x) & = \frac{\DenTime_{t}(x) - x}{t} \qquad \mbox{where
  $\DenTime_t(x) \defn \Exs[\Zvar \mid X_t = x]$.}
\end{align}
Here the denoising operator $x \mapsto \DenTime_t(x)$ defines the
minimum mean-squared error estimate of $\Zvar$ given the observation
$\Xvar_t = x$.

Our theory involves the associated mean-squared error (MSE) function
\begin{subequations}
\begin{align}
  \label{EqnHeatMSE}
  \hfun(t) & \defn \Exs \| \Zvar - \DenTime_t(X_t) \|_2^2 \qquad
  \mbox{where $X_t = Z + B_t$,}
\end{align}
and we assume that the derivative $\hfun'$ exists and is integrable.
For any pair $0 < a < b < \infty$, we define the \emph{\dgclong}, or
\dgc for short, via
\begin{align}
\label{EqnDefnHinfo}  
\Hinfo(a,b) & \defn \frac{1}{2} \int_a^b \frac{\hfun'(t)}{t} dt.
\end{align}
\end{subequations}
Letting $\Xhat_N \sim \Qprob_\Tinit$ denote the distribution of the
algorithm's~\eqref{EqnHeatAlgorithm} final iterate, our main result is
a very simple upper bound on the KL divergence $\KL(\Prob_\Tinit \|
\Qprob_\Tinit)$ in terms of the \dgc function along the path:
\mygraybox{
\begin{theorem}
\label{ThmMaster}
Suppose that $\Zvar$ has finite second moments, and the MSE function
$\hfun$ is differentiable. Then given an initial value $\Xhat_0 \sim
\Qprob_\Tfinal$, running the \SIEuler scheme using any
$\{t_\iter\}_{\iter=0}^{\MaxIter}$ grid over $[\Tinit, \Tfinal]$
yields a terminal output $\Xhat_\MaxIter \sim \Qprob_\Tinit$ such that
\begin{align}
\label{EqnMaster}  
\KL(\Prob_\Tinit \| \Qprob_\Tinit) & \leq \sum_{\iter =
  0}^{\MaxIter-1} \big( \tfrac{t_\iter}{t_{\iter + 1}} - 1 \big)
\Hinfo(t_{\iter+1}, t_\iter) + \KL(\Prob_\Tfinal \| \Qprob_\Tfinal).
  \end{align}
  \end{theorem}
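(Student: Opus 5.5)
The plan is to reparametrize the heat path as a stochastic localization process, recognize the \SIEuler updates~\eqref{EqnHeatAlgorithm} as an exact Euler--Maruyama scheme for the associated innovations SDE, and then bound the path-space KL by Girsanov. The resulting drift mismatch is handled through the martingale property of the posterior mean.

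\emph{Step 1 (change of variables).} Set $\lam = 1/t$ and $\Ylam \defn \lam X_{1/\lam}$. By time inversion of Brownian motion, $\Ylam = \lam \Zvar + \tilde B_\lam$ for another Brownian motion $\tilde B$. This is the stochastic localization path. Classical nonlinear filtering gives the innovations representation
\[
d\Ylam = \DenTime_{1/\lam}(\Ylam/\lam)\, d\lam + d\bar B_\lam,
\]
where $\bar B$ is a Brownian motion adapted to the filtration of $Y$. Writing $\lam_\iter = 1/t_\iter$, the Euler--Maruyama step of this SDE freezes the drift at $\lam_\iter$ and adds Gaussian noise of variance $\lam_{\iter+1}-\lam_\iter$. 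Converting back via $\Xhat = t\,\Yhat$ and using Tweedie~\eqref{EqnTweedie}, the noise variance becomes $t_{\iter+1}^2(1/t_{\iter+1}-1/t_\iter) = (t_\iter - t_{\iter+1})\,t_{\iter+1}/t_\iter$. The drift becomes $(t_\iter-t_{\iter+1})\score_{t_\iter}(\Xhat_\iter)$, which reproduces~\eqref{EqnHeatAlgorithm} exactly.

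Since $x\mapsto x/t$ is a bijection at each fixed time, KL divergences are unchanged. It therefore suffices to bound the KL between the law of $Y_{\lam_\MaxIter}$ and that of the Euler output.

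\emph{Step 2 (Girsanov / path KL).} Interpolate the Euler scheme in continuous time on each interval $[\lam_\iter,\lam_{\iter+1}]$, with the drift frozen at $\DenTime$ evaluated at $(\lam_\iter, Y_{\lam_\iter})$. By data processing, the terminal KL is at most the path-space KL. By Girsanov together with the chain rule, the path-space KL equals the initial KL $\KL(\Prob_\Tfinal\|\Qprob_\Tfinal)$ plus
\[
\tfrac12\sum_\iter \int_{\lam_\iter}^{\lam_{\iter+1}} \Exs\big\|\Exs[\Zvar\mid\mathcal F_\lam]-\Exs[\Zvar\mid \mathcal F_{\lam_\iter}]\big\|_2^2\, d\lam.
\]
Here I use that the drift at time $\lam$ equals $\Exs[\Zvar\mid \mathcal F^Y_\lam]$, which holds because $\Ylam$ is sufficient for the past.

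\emph{Step 3 (martingale identity).} Because $M_\lam \defn \Exs[\Zvar\mid\mathcal F_\lam]$ is a martingale, its increments are orthogonal. Hence
\[
\Exs\|M_\lam - M_{\lam_\iter}\|^2 = \Exs\|M_\lam\|^2-\Exs\|M_{\lam_\iter}\|^2 = \hfun(t_\iter)-\hfun(1/\lam).
\]
Write this difference as $\int_{1/\lam}^{t_\iter}\hfun'(s)\,ds$ and apply Fubini with $d\lam$ measured in $t$-coordinates. The $\iter$-th term becomes
\[
\tfrac12\int_{t_{\iter+1}}^{t_\iter}\hfun'(s)\,\big(\tfrac1{t_{\iter+1}}-\tfrac1s\big)\,ds.
\]
Now $\tfrac1{t_{\iter+1}}-\tfrac1s = \tfrac{s-t_{\iter+1}}{s\,t_{\iter+1}} \le \tfrac{1}{s}\big(\tfrac{t_\iter}{t_{\iter+1}}-1\big)$ for $s\le t_\iter$. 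The MSE $\hfun$ is nondecreasing, since more noise yields a worse MMSE, so $\hfun'\ge 0$ and the inequality can be integrated against $\hfun'$. This gives exactly $\big(\tfrac{t_\iter}{t_{\iter+1}}-1\big)\Hinfo(t_{\iter+1},t_\iter)$, and summing over $\iter$ yields~\eqref{EqnMaster}.

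\emph{Main obstacle.} I expect Step 2 to be the main obstacle, specifically justifying Girsanov under only finite second moments of $\Zvar$. My first approach would be to avoid Novikov's condition by using the chain-rule/entropy formulation of path-space KL for diffusions with square-integrable drift difference, in the style used by Chen et al. Alternatively, I would truncate and use lower semicontinuity of KL. A second point that needs care is verifying that the innovations drift is $\Exs[\Zvar\mid\Ylam]$ and that $\bar B$ is a Brownian motion, which follows from the Fujisaki--Kallianpur--Kunita theorem.
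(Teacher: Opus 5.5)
Your argument is correct and yields exactly \eqref{EqnMaster}. It shares the paper's skeleton: the stochastic-localization reparametrization, the identification of SI-Euler with Euler--Maruyama for the innovations SDE, invariance of KL under $x \mapsto x/t$, and accumulation over steps. Where it differs is in how the per-step error is evaluated.

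\textbf{The paper's route.} The paper never works in path space (Lemma~\ref{LemGeoOneStep}).
Conditional on $Y_s = u$, it compares the exact transition, a posterior mixture of $\mathcal{N}(u + hz, hI)$, with the Euler Gaussian $\mathcal{N}(u + h\, m_s(u), hI)$ by an entropy/cross-entropy computation. This gives $\tfrac{h}{2}\operatorname{tr}\mathbf{C}_s(u) - I(Z; Y_{s+h} \mid Y_s = u)$. The mutual information is then converted into $\tfrac12\int_s^{s+h}\gfun(r)\,dr$ by the conditional I--MMSE identity, and only the finite-dimensional KL chain rule over grid points is needed.

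\textbf{Your route.} You replace I--MMSE by Girsanov with the filter drift plus martingale orthogonality, which is the identity behind Lemma~\ref{LemDinfoSandwich}.

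\textbf{Relation between the two.} The intermediate quantities are in fact identical. With $s = \lambda_j$ and $s+h = \lambda_{j+1}$, your $j$th term $\tfrac12\int_{\lambda_j}^{\lambda_{j+1}}\{\hfun(t_j) - \hfun(1/\lambda)\}\,d\lambda$ is the paper's exact representation $\tfrac12\int_s^{s+h}\{\gfun(s) - \gfun(r)\}\,dr$. Passing to path space costs nothing here. Conditional on $Y_{\lambda_j}$, the Euler path is Brownian motion with constant drift and the true path is a posterior mixture of such. Both therefore have Cameron--Martin densities that depend only on the endpoint $Y_{\lambda_{j+1}}$.

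\textbf{What each buys.} The paper's route avoids path-space technicalities altogether. The obstacle you flag is mild anyway. The per-step likelihood ratio is explicit, and $\mathbb{E}\|M_\lambda\|_2^2 \le \mathbb{E}\|Z\|_2^2$ makes the stochastic-integral term in its logarithm mean-zero, so no Novikov-type condition is needed.

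Your route buys two small things:
\begin{itemize}
\item The score-error term \eqref{EqnScoreError} falls out of the same orthogonality, since $\widehat{m}_{\lambda_j}(Y_{\lambda_j})$ is $\mathcal{F}_{\lambda_j}$-measurable.
\item Your Fubini step gives the exact per-step expression $\tfrac12\int_{t_{j+1}}^{t_j}\hfun'(u)\big(\tfrac{1}{t_{j+1}} - \tfrac{1}{u}\big)\,du$. From this, \eqref{EqnMaster} follows by a single pointwise comparison of weights. The weight ratio $(u - t_{j+1})/(t_j - t_{j+1})$ also makes the factor-$\tfrac12$ short-interval sharpness asserted in Section~\ref{SecLocalSharpness} transparent.
\end{itemize}
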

}

\noindent See~\Cref{SecProofThmMaster} for the proof.  Notably, it is
less than three pages long, and involves only elementary analysis.
Here we comment on a few features of this bound.

\paragraph{Incorporating score error:}
As stated, \Cref{ThmMaster} applies to the idealized algorithm that
uses exact score functions $\score_t$ to perform the
updates~\eqref{EqnHeatAlgorithm}.  In practice, the algorithm might be
implemented with estimates $\scorehat_t$ of the score function at each
time $t$.  Our result remains robust to such errors in the standard
way: in particular, the upper bound~\eqref{EqnMaster} remains valid
after the addition of the score error term
\begin{align}
\label{EqnScoreError}
  \Escore(t_0^{\MaxIter-1}) & \defn \frac{1}{2} \sum_{\iter =
    0}^{\MaxIter - 1} \big \{ \tfrac{t_\iter}{t_{\iter+1}} - 1 \big \}
  \; t_\iter \; \Exs \big[ \| \scorehat_{t_\iter}(X_{t_\iter}) -
    \score_{t_\iter}(X_{t_\iter}) \|_2^2 \big] \qquad \mbox{where
    $X_{t_\iter} \sim \Prob_{t_\iter}$.}
  \end{align}
We establish this fact as part of the proof
in~\Cref{SecProofThmMaster}.

\paragraph{Initialization cost:}  The initialization error
$\KL(\Prob_\Tfinal \| \Qprob_\Tfinal)$ is straightforward to handle,
with several choices available.  The simplest is to initialize the
algorithm with a sample $\Xtil_\Tfinal \sim \Gauss_\Tfinal$, where
$\Gauss_\Tfinal$ denotes the $N(0, \Tfinal \IdMat)$ distribution, so
that no further score evaluations are required.  Letting $\CovZ \defn
\cov(Z)$ denote the covariance matrix of $Z$, an elementary
calculation shows that, under the centering assumption $\Exs[\Zvar] =
0$, we have $\KL(\Prob_\Tfinal \| \Gauss_\Tfinal) \leq \frac{\Tr
  \CovZ}{2 \Tfinal}$, so that setting $\Tfinal = \frac{\Tr
  \CovZ}{\varepsilon}$ ensures that the initialization has KL-error at
most $\varepsilon/2$.

This naive initialization leads to a choice of $\Tfinal$ that depends
on $\varepsilon$.  This can be avoided by making use of more
sophisticated results that determine the smallest $\Tfinal$ such that
$\Prob_\Tfinal$ is strongly log-concave and well-conditioned, in which case
we can obtain an $\varepsilon/2$ accurate initialization using
$\Order(1/\varepsilon)$ rounds of the unadjusted Langevin algorithm
(ULA) at the terminal stage.


\subsection{Properties of \dgclong}
\label{SecPropDGC}

The \dgc function~\eqref{EqnDefnHinfo}, which plays a central role
in~\Cref{ThmMaster}, has a number of interesting properties that are
conceptually valuable, and turn out to be extremely useful for
algorithmic tuning.

\subsubsection{Local sharpness and additivity}
\label{SecLocalSharpness}
As shown in~\Cref{SecProofThmMaster}, the proof of~\Cref{ThmMaster}
relies upon an upper bound on the error in the Euler
discretization~\eqref{EqnHeatAlgorithm}.  In particular, for $0 < \eta
< s$, letting $\EulKL(s - \eta, s)$ denote the KL error incurred by
the discretization over the interval $[s - \eta, s]$, we prove the
upper bound
\begin{subequations}
  \begin{align}
\label{EqnNewEuler}    
\EulKL(s - \eta, s) & \leq \psi \Big(\frac{\eta}{s} \Big) \, \; \Hinfo(s -
\eta, s) \qquad \mbox{where $\psi(t) = \dfrac{t}{1-t}$.}
\end{align}
  See~\Cref{LemGeoOneStep} for the details of this claim.  To connect
  with~\Cref{ThmMaster}, observe that for the choices $s = t_{j}$ and
  $\eta = t_j - t_{j+1}$, we have $\eta/s = 1 - (t_{j+1}/t_j)$, and
  $\psi(\eta/s) = \frac{t_j}{t_{j+1}} - 1$, consistent with the terms
  in equation~\eqref{EqnMaster}.

Given the upper bound~\eqref{EqnNewEuler}, it is natural to wonder
whether or not $\Hinfo$ is fundamental, or just a convenient device
for analysis. It turns out that, under some mild regularity
conditions, the $\Hinfo$-based upper bound~\eqref{EqnNewEuler} is
sharp up to a constant factor as the interval shrinks.  In particular,
when $\hfun'$ is continuous at $s$ with $\hfun'(s) > 0$, a Taylor
argument can be used to show that
\begin{align*}
\frac{\EulKL(s - \eta, s)}{\psi(\eta/s) \; \Hinfo(s - \eta, s)}
  & = \frac{1}{2} + o(1) \quad \mbox{as $\eta \rightarrow 0$.}
\end{align*}
Thus, the \dgc function is fundamental, in the sense that it recovers
the KL error up to a factor of two in the short interval limit, under
mild regularity requirements.

For future reference, we note that for any triple $0 < a < b < c$, it
is an immediate consequence of the definition~\eqref{EqnDefnHinfo} that
the \dgc function satisfies the additivity property
\begin{align}
  \label{EqnHinfoAdditive}
  \Hinfo(a, c) & = \Hinfo(a, b) + \Hinfo(b, c).
\end{align}
\end{subequations}
This simple property is very useful, since it allows us to decompose
arbitrary intervals into sums over shorter intervals.


\subsubsection{Information representation}
\label{SecHinfoInfoRep}

The \dgc function admits an equivalent representation in terms of the
information lost along the heat path.  While less practically useful
than the integral representation~\eqref{EqnDefnHinfo}, this
information-theoretic representation provides conceptual insight, and
can be used to derive various analytic upper bounds. Letting $\Info(U;
V)$ denote the mutual information between random variables $U$ and
$V$, it can be shown that the \dgc function~\eqref{EqnDefnHinfo} has
the equivalent representation
\begin{align}
  \label{EqnHinfoInfoRep}
  \Hinfo(a, b) & = \Info(\Zvar; X_a) - \Info(\Zvar; X_b) + \frac{1}{2}
  \left\{ \frac{\hfun(b)}{b} - \frac{\hfun(a)}{a} \right\}.
\end{align}
Thus, we see that $\Hinfo(a, b)$ measures the decrease in information
about the latent variable $\Zvar$ between noise levels $a$ and $b$,
together with an explicit correction determined by the endpoint
denoising errors.  As shown in~\Cref{SecProofEqnHinfoRep}, the
equivalence~\eqref{EqnHinfoInfoRep} follows by integration by parts,
combined with the I--MMSE identity~\cite{GuoEtAl05}.


\subsubsection{The log-time \dgc density}
\label{SecLogTimeDGC}

Let us now introduce a fundamental quantity that plays a role in the
sequel.  In particular, we define
\begin{align}
\label{EqnDefnOrigQdens}  
\qdens(r) & \defn 2 \frac{d}{dr} \Hinfo(\Tinit, \Tinit e^r),
\end{align}
corresponding to the \emph{log-time \dgc density}.  In terms of
$\hfun'$, it can be verified that we have the equivalence $\qdens(r) =
\hfun'(\Tinit e^r)$.  In~\Cref{SecFine}, we discuss the key role that
this density, along with its square root $\sqrt{\qdens(r)}$, plays in
understanding the accuracy of the general schemes
from~\Cref{ThmMaster}.

In addition to these analytical properties, the log-time \dgc density
provides insight into the geometric structure of the latent variable
$\Zvar$ as we illustrate via some simple examples here.
\begin{figure}[h]
  \begin{center}
    \begin{tabular}{ccc}
      \widgraph{0.3\textwidth}{\figdir/fig_standard_gaussian} &
      \widgraph{0.3\textwidth}{\figdir/fig_two_gaussian} &
      \widgraph{0.3\textwidth}{\figdir/fig_hier_gmm}
      \\
      (a) & (b) & (c)
    \end{tabular}
    \caption{Plots of the log-time \dgc
      density~\eqref{EqnDefnOrigQdens} for three different choices of
      the latent variable $\Zvar$.  (a) A standard Gaussian
      $\Normal(0,1)$.  (b) A Gaussian mixture model with $2$
      components.  (c) A hierarchical Gaussian mixture model with the
      nested structure shown in~\Cref{FigHier}.}
        \label{FigQplots}
  \end{center}
\end{figure}
\Cref{FigQplots} shows plots of the function $\qdens$ for three
representative instances of $\Zvar$, all univariate variables in this
simple illustration.  Panel (a) corresponds to the standard normal
$\Normal(0,1)$; in this case, we see that the log-time \dgc density is a
very simple and monotonic function.  In panel (b), we plot $\qdens$
for a $2$-component Gaussian mixture model, with components chosen
with relatively small variances.  This choice leads to a $\qdens$
function with a sharp peak in the middle, corresponding to the
log-heat time at which the two mixture components are resolved.

\begin{figure}[h]
  \begin{center}
    \begin{tabular}{cc}
      \widgraph{0.5\textwidth}{\figdir/fig_full_density_hier} &
      \widgraph{0.5\textwidth}{\figdir/fig_heat_waterfall_hier} \\ (a)
      Density of $\Zvar$. & (b) Waterfall structure of heat path.
    \end{tabular}
    \caption{Structure of the hierarchical mixture model.  (a) The
      density is separated into mixture components that are resolved
      at a sequence of scales. (b) Waterfall structure of the heat
      path, showing the resolution of mixture components of $\Zvar$ at
      four distinct heat times.}
        \label{FigHier}
  \end{center}
\end{figure}

Finally, panel (c) in~\Cref{FigQplots} shows a more interesting
example of $\qdens$, with a multiscale geometry carefully designed to
generate a \dgc density with $L = 4$ well-separated peaks.  The
structure of the underlying latent variable $\Zvar$ is shown
in~\Cref{FigHier}.  As can be seen from the density in panel (a), it
is a hierarchical Gaussian mixture model, with a total of $3^4$
mixture components, organized into a hierarchy of $L = 4$ levels.
Panel (b) shows the waterfall structure of the heat path $X_t = \Zvar
+ B_t$, with different resolutions of structure revealed at different
heat times $t$.  In particular, there are four distinct times at which
the heat path resolves a particular hierarchy of the mixture model.
By the latest heat time $t = 0.04$, the smoothed density has
become unimodal, whereas the density at the initial heat time has a
highly complex structure.  It is this hierarchical structure that leads to $L
= 4$ distinct bumps in the $\qdens$-profile in~\Cref{FigQplots}(c),
and a total of $C^L = 3^4$ components in the original GMM
in~\Cref{FigHier}(a).

\subsubsection{Denoising increments and data-based estimation}
\label{SecMartingale}

Finally, let us elucidate an important connection between the \dgc
function and the structure of the heat path, one that allows us to
develop data-dependent procedures with certified guarantees.  Given
the forward heat path $X_t \defn Z + B_t$, where $\{B_t \}_{t \geq 0}$
is a Brownian motion, we define the \emph{denoising increment}
\begin{align}
\label{EqnDefnDinfo}
\Dinfo(s, t) & \defn \Exs \| \DenTime_s(X_s) - \DenTime_t(X_t) \|_2^2,
\qquad \mbox{for any pair $0 < s < t$,}
\end{align}
where we recall from equation~\eqref{EqnTweedie} the definition
$\DenTime_t(x) \defn \Exs[ \Zvar \mid X_t = x]$ of the denoiser.  By
construction, the denoising increment $\Dinfo(s,t)$ lends itself
naturally to a Monte Carlo estimate, assuming that we have samples of
the latent variable $Z$ available for constructing the forward heat
path. The following result relates the denoising increment to the MSE
function $\hfun(t) = \Exs \|Z - \DenTime_t(X_t)\|_2^2$, and shows how
it can be used to sandwich the \dgclong $\Hinfo$.

\mygraybox{
\begin{lemma}
\label{LemDinfoSandwich}
For any pair $0 < s < t$, we have the equivalence
\begin{subequations}
  \begin{align}
\label{EqnDinfoExplicit}    
 \Dinfo(s, t) = \hfun(t) - \hfun(s) \qquad \mbox{where $\hfun(t) =
   \Exs \|\Zvar - \DenTime_t(X_t)\|_2^2$.}
\end{align}
as well as the sandwich relation
\begin{align}
\label{EqnDinfoSandwich}  
\frac{\Dinfo(s, t)}{t} \; \leq \; 2 \Hinfo(s, t) \leq \frac{\Dinfo(s,
  t)}{s}.
\end{align}
\end{subequations}
\end{lemma}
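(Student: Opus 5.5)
The equivalence~\eqref{EqnDinfoExplicit} comes from the martingale structure of the reverse filtration. For $s < t$, the observation $X_t = X_s + (B_t - B_s)$ is a function of $X_s$ and a Brownian increment independent of $(Z, X_s)$. Hence $\sigma(X_t) \subseteq \sigma(X_s, B_t - B_s)$, and
\[
\Exs[Z \mid X_s, B_t - B_s] = \Exs[Z \mid X_s] = \DenTime_s(X_s).
\]
The tower property then gives $\Exs[\DenTime_s(X_s) \mid X_t] = \DenTime_t(X_t)$, so $\{\DenTime_t(X_t)\}$ is a reverse-time martingale. Its increments are orthogonal.

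We decompose
\[
Z - \DenTime_t(X_t) = \big(Z - \DenTime_s(X_s)\big) + \big(\DenTime_s(X_s) - \DenTime_t(X_t)\big).
\]
The first summand is orthogonal to every square-integrable function of $(X_s, B_t - B_s)$, and the second summand is such a function. Expanding the square therefore gives $\hfun(t) = \hfun(s) + \Dinfo(s,t)$, which is~\eqref{EqnDinfoExplicit}. Finite second moments of $Z$ make every term finite.

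The sandwich~\eqref{EqnDinfoSandwich} follows from the definition~\eqref{EqnDefnHinfo}, which gives $2\Hinfo(s,t) = \int_s^t \hfun'(u)\,u^{-1}\,du$. The first step shows that $\hfun$ is nondecreasing, since $\hfun(t) - \hfun(s) = \Dinfo(s,t) \geq 0$ for all $s<t$. A differentiable nondecreasing function has $\hfun' \geq 0$ everywhere. On $[s,t]$ we have $1/t \le 1/u \le 1/s$, so multiplying by the nonnegative $\hfun'(u)$ and integrating yields
\[
\frac{1}{t}\int_s^t \hfun'(u)\,du \;\le\; 2\Hinfo(s,t) \;\le\; \frac{1}{s}\int_s^t \hfun'(u)\,du.
\]

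It remains to identify $\int_s^t \hfun' = \hfun(t)-\hfun(s) = \Dinfo(s,t)$. This fundamental-theorem-of-calculus step is the only delicate point, since it needs $\hfun$ to be absolutely continuous on $[s,t]$. The paper assumes $\hfun'$ exists everywhere and is integrable. A function differentiable everywhere on $[s,t]$ with integrable derivative satisfies the FTC, which is a standard real-analysis fact. Alternatively, one can use that $\hfun$ is monotone and argue directly. I therefore expect no real obstacle beyond verifying the conditional-independence claim carefully: $B_t - B_s$ must be independent of $(Z, B_s)$, which holds because $Z$ is independent of the Brownian motion.
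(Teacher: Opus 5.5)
Your proposal is correct and follows essentially the paper's route. For the identity $\Dinfo(s,t) = \hfun(t) - \hfun(s)$, both arguments use the reverse-martingale (orthogonality) structure of $\DenTime_t(X_t)$ together with a Pythagorean expansion: the paper expands $\Dinfo$ and evaluates the cross term via the martingale property, while you split $Z - \DenTime_t(X_t)$ directly, which is the same computation. The sandwich then follows, as in the paper, from $\hfun' \geq 0$ and $1/t \leq 1/u \leq 1/s$; deriving $\hfun' \geq 0$ from the first identity and justifying the fundamental-theorem-of-calculus step explicitly are useful details the paper leaves implicit.
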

}

\paragraph{Utility of denoising increments:}  
The sandwich property~\eqref{EqnDinfoSandwich} is particularly useful
when applied to an interval of a given multiplicative length---i.e.,
for pairs $s$ and $t = c s$ for some constant $c > 1$.  In this
special case, the sandwich property yields the \emph{constant-factor
approximation}
\begin{subequations}
\begin{align}
\label{EqnConstantFactor}  
\frac{1}{c} \; \frac{\Dinfo(s, t)}{s} \; \leq \; 2 \Hinfo(s, t) \;
\leq \; \frac{\Dinfo(s, t)}{s}.
\end{align}
Moreover, by leveraging the additivity
property~\eqref{EqnHinfoAdditive}, we can obtain constant-factor
approximations over blocks of arbitrary length.  We make use of both
Monte Carlo approximations and these constant-factor approximations in
our analysis of data-based methods for selecting a single stepsize
in~\Cref{SecDataSingle}, and for multiple stepsizes
in~\Cref{SecDataMultiple}.

\paragraph{Martingale structure of the heat path:}
The proof of~\Cref{LemDinfoSandwich} makes use of properties of the
forward heat path $X_t = Z + B_t$. In particular, for any $t > s$, we
have the Markov relation $Z \rightarrow X_s \rightarrow X_t$, and
hence we have $\Exs[\Zvar \mid X_s, X_t] = \Exs[\Zvar \mid X_s] =
\DenTime_s(X_s)$.  As a result, we have the equivalence
\begin{align}
\label{EqnReverseMartingale}  
\Exs[ \DenTime_s(X_s) \mid X_t] & = \Exs \left[ \Exs[\Zvar \mid X_s,
    X_t] \mid X_t \right] \; = \; \Exs[\Zvar \mid X_t] \; = \;
\DenTime_t(X_t),
\end{align}
\end{subequations}
which can be understood as a (reverse) martingale property for the
denoising functions $\{\DenTime_t(X_t) \}_{t \geq 0}$. As shown
in~\Cref{SecProofLemDinfoSandwich}, the
equivalence~\eqref{EqnDinfoExplicit} can be established by using this
martingale property in conjunction with the Pythagorean theorem.

As for the sandwich relation~\eqref{EqnDinfoSandwich}, it follows from
the definition~\eqref{EqnDefnHinfo} of the \dgc function, combined
with the fact that $\hfun'(u) \geq 0$, corresponding to the fact that
the MSE is non-decreasing along the forward heat path.  More
precisely, we have
\begin{align*}
\Hinfo(s,t) & \stackrel{(i)}{=} \frac{1}{2} \int_s^t
\frac{\hfun'(u)}{u} du \; \stackrel{(ii)}{\leq} \frac{1}{2 s} \int_s^t
\hfun'(u) du \; = \; \frac{1}{2} \frac{\hfun(t) - \hfun(s)}{s},
\end{align*}
where step (i) follows from the definition~\eqref{EqnDefnHinfo} of
$\Hinfo$; step (ii) follows since $h'(t) \geq 0$ and $1/u \leq 1/s$
for $u \in [s,t]$, and the final step follows by computing the
integral.  This argument establishes the upper bound in the
sandwich relation~\eqref{EqnDinfoSandwich}, and the lower bound
follows by an entirely analogous argument.


\section{Single-block guarantees}
\label{SecSingle}

In this section, we explore a simple but easily interpretable
consequence of the general \dgc guarantee in~\Cref{ThmMaster}.  It
involves a geometrically decaying choice of the stepsizes based on a
\emph{single parameter} $\rho$, and it leads to bounds that depend on
the \dgc complexity $\Hinfo(\delta, T)$ of the full interval $[\delta,
  T]$.  In~\Cref{SecSingleGuarantee}, we derive this upper bound as a
direct corollary of~\Cref{ThmMaster}.  In~\Cref{SecDataSingle}, we
exploit the properties of the \dgc function to derive data-adaptive
parameter choices that achieve our upper bound up to constant factors.
Finally, \Cref{SecInfoUpper} is devoted to information-theoretic upper
bounds on $\Hinfo(\delta, T)$; exploration of consequences for
specific model classes; as well as comparison to related results.

\subsection{Guarantees for a geometric schedule}
\label{SecSingleGuarantee}

Suppose that for some $\rho > 0$, we initialize with $t_0 = \Tfinal$,
and choose grid points according to the decreasing geometric schedule
\begin{align}
  \label{EqnDecreasingGeometric}
  t_{\iter + 1} = \max \Big \{ \frac{t_\iter}{1 + \rho}, \Tinit \Big
  \} \qquad \mbox{terminating at the smallest $\MaxIter$ such that
    $t_\MaxIter = \Tinit$.}
\end{align}
We refer to the updates~\eqref{EqnHeatAlgorithm} using this geometric
schedule as the \emph{\SIEulerRho procedure}, and the following result
provides a direct analysis of its KL error.

\mygraybox{
  \begin{corollary}
\label{CorNewSingle}
For any given iteration budget $\Nscore \geq \log(\Tfinal/\Tinit)$,
suppose that we set $\rho = \exp \{
\frac{\log(\Tfinal/\Tinit)}{\Nscore} \} - 1$.  Then the \SIEulerRho
procedure yields terminal output $X_N \sim \Qprob_\Tinit$ with
\begin{align}
\label{EqnNewSingle}
  \KL(\Prob_\Tinit \| \Qprob_\Tinit) & \leq \frac{2 \,\Hinfo(\Tinit,
    \Tfinal) \log(\Tfinal/\Tinit)}{\Nscore} + \KL(\Prob_\Tfinal \|
  \Qprob_\Tfinal).
\end{align}
  \end{corollary}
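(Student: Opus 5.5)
The plan is to apply \Cref{ThmMaster} directly to the geometric grid~\eqref{EqnDecreasingGeometric}. The per-step multipliers are uniformly bounded, and additivity~\eqref{EqnHinfoAdditive} then collapses the sum of \dgc increments into $\Hinfo(\Tinit,\Tfinal)$.

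\emph{Step 1: bound the multipliers.} For every non-final step we have $t_\iter/t_{\iter+1} = 1+\rho$. For the final step the $\max$ in~\eqref{EqnDecreasingGeometric} may truncate, giving $t_\iter/t_{\iter+1} \leq 1+\rho$. Hence $\tfrac{t_\iter}{t_{\iter+1}} - 1 \leq \rho$ for all $\iter$. Since $\hfun' \geq 0$, each $\Hinfo(t_{\iter+1}, t_\iter)$ is nonnegative. Therefore the sum in~\eqref{EqnMaster} is at most $\rho \sum_\iter \Hinfo(t_{\iter+1},t_\iter)$, which equals $\rho\, \Hinfo(\Tinit,\Tfinal)$ by repeated use of~\eqref{EqnHinfoAdditive}.

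\emph{Step 2: the number of steps.} With $\rho = e^{L/\Nscore}-1$, where $L \defn \log(\Tfinal/\Tinit)$, we get $(1+\rho)^{\Nscore} = \Tfinal/\Tinit$. So the schedule reaches $\Tinit$ in exactly $\Nscore$ steps when $\Nscore$ is an integer, and in $\lceil \Nscore\rceil$ steps otherwise. This confirms that $\Nscore$ is the iteration budget; the KL bound itself depends only on $\rho$.

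\emph{Step 3: the elementary inequality, which is the only real point.} We need $\rho \leq 2L/\Nscore$. Set $x = L/\Nscore$; the hypothesis $\Nscore \geq L$ gives $x \in (0,1]$. The function $e^x - 1$ is convex and vanishes at $0$, so on $[0,1]$ it lies below its chord: $e^x - 1 \leq (e-1)x \leq 2x$. Substituting $\rho \leq 2L/\Nscore$ into the bound from Step 1 yields~\eqref{EqnNewSingle}.

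I expect no serious obstacle. The only care needed is to check that the truncated last step does not break the bound $\tfrac{t_\iter}{t_{\iter+1}}-1\le\rho$, and to use the nonnegativity of $\Hinfo$ when bounding the weighted sum. If score error is present, the same argument carries the term~\eqref{EqnScoreError} along unchanged.
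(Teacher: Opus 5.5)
Your proof is correct and matches the paper's argument: bound each multiplier $t_\iter/t_{\iter+1}-1$ by $\rho$, collapse the sum via additivity~\eqref{EqnHinfoAdditive}, and finish with $e^x-1\leq 2x$ on $[0,1]$. Your explicit use of $\Hinfo(t_{\iter+1},t_\iter)\geq 0$, your handling of the truncated last step, and your chord argument for the elementary inequality spell out details the paper leaves implicit.
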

}
\noindent The proof is an easy consequence of~\Cref{ThmMaster}, so we
give it here.
\begin{proof}
Our choice of $\rho$ and the geometric
schedule~\eqref{EqnDecreasingGeometric} ensures that we traverse the
interval $[\Tinit, \Tfinal]$ in exactly $\Nscore$ steps.  Moreover,
the geometric schedule ensures that $\tfrac{t_\iter}{t_{\iter + 1}} -
1 \leq \rho$, so that the bound~\eqref{EqnMaster} then implies that
\begin{align}
  \label{EqnMasterSingle}
  \KL(\Prob_\Tinit \| \Qprob_\Tinit) & \leq \rho \sum_{\iter =
    0}^{\MaxIter - 1} \Hinfo(t_{\iter + 1}, t_\iter) +
  \KL(\Prob_\Tfinal \| \Qprob_\Tfinal) \; \stackrel{(i)}{=} \; \rho
  \Hinfo(\Tinit, \Tfinal) + \KL(\Prob_\Tfinal \| \Qprob_\Tfinal),
\end{align}
where equality (i) follows from the fact that $\sum_{\iter =
  0}^{\MaxIter - 1} \Hinfo(t_{\iter + 1}, t_\iter) = \Hinfo(\Tinit,
\Tfinal)$, using the additivity property~\eqref{EqnHinfoAdditive}.
Finally, using the upper bound $e^t - 1 \leq 2 t$, valid for $t \in
[0,1]$, we observe that $\rho \leq 2
\frac{\log(\Tfinal/\Tinit)}{\Nscore}$ when $\Nscore \geq
\log(\Tfinal/\Tinit)$.
\end{proof}

\paragraph{Iteration complexity:}  For future reference,
it is also useful to restate the bound~\eqref{EqnNewSingle} in terms
of the iteration complexity.  Let $\Ninit$ denote the iteration
complexity of a procedure for drawing $\Xhat_0 \sim \Qprob_\Tfinal$
such that $\KL(\Prob_\Tfinal \| \Qprob_\Tfinal) \leq \varepsilon/2$.
We then apply the procedure in~\Cref{CorNewSingle} with the iteration
budget
\begin{subequations}
  \begin{align}
\label{EqnSingleBudget}
N(\varepsilon/2) & \defn \left \lceil \max \left \{ \log(T/\delta),
\frac{4 \Hinfo(\Tinit, \Tfinal) \log(\Tfinal/\Tinit)}{\varepsilon}
\right \} \right \rceil \; \leq \; 1 + \log(\Tfinal/\Tinit) +
\frac{4}{\varepsilon} \; \Hinfo \big(\Tinit, \Tfinal\big) \: \log
\big(\tfrac{\Tfinal}{\Tinit} \big),
\end{align}
thereby ensuring that the first term in the bound~\eqref{EqnNewSingle}
is at most $\varepsilon/2$.  Combining the ingredients, we can sample
to $\varepsilon$-accuracy in KL divergence using at most
\begin{align}
\label{EqnSingleBlockStrong}  
\NscoreTot(\varepsilon) & \leq 1 + \log(\Tfinal/\Tinit) +
\frac{4}{\varepsilon} \; \Hinfo \big(\Tinit, \Tfinal\big) \: \log
\big(\tfrac{\Tfinal}{\Tinit} \big) + \Ninit
\end{align}
\end{subequations}
iterations.


\subsection{Data-dependent method for certified guarantees}
\label{SecDataSingle}

Given a fixed iteration budget $N$, we can achieve the
bound~\eqref{EqnNewSingle} from~\Cref{CorNewSingle} with a fully
implementable scheme, since all elements of the triple $(N, \Tinit,
\Tfinal)$ that determine the geometric multiplier $\rho$ are known.
However, the more challenging and user-relevant problem is to give a
\emph{certified guarantee} on the accuracy of the final output
$\Xhat_N$.  Concretely, for a given accuracy $\varepsilon > 0$, say
that we wish to certify the first term in the
bound~\eqref{EqnNewSingle} is at most $\varepsilon/2$; doing so
requires an iteration number $N \geq \frac{4 \Hinfo(\Tinit, \Tfinal)
  \log(\Tfinal/\Tinit)}{\varepsilon}$.  Consequently, the key
technical challenge is to obtain high-probabability bounds on the \dgc
function $\Hinfo(\Tinit, \Tfinal)$, ideally ones that are tight to
within a constant multiplicative factor.  In this section, we develop
and analyze a procedure for doing so based on samples $\{\Zup{i}
\}_{i=1}^m$.  It exploits the denoising increments and sandwich
property introduced in~\Cref{LemDinfoSandwich}
from~\Cref{SecMartingale}.

\subsubsection{A tail-robust estimator}
\label{SecTailRobust}
For future usability of this result, let us describe the procedure for
estimating $\Hinfo(a,b)$ over an arbitrary interval $[a,b]$.  We then
specialize to the interval $[\Tinit, \Tfinal]$ so as to obtain a
certified version of~\Cref{CorNewSingle}.

\paragraph{Constant-factor approximation:}
We begin by observing that we can obtain a constant-factor
approximation by splitting the interval $[a,b]$ with the dyadic
partition $\dyadic_\ell = \min \{ 2^\ell a, b \}$, for $\ell = 0, 1,
\ldots, \DyaTot$; observe that the number of points is bounded as
$\DyaTot \leq 1 + \log_2(b/a)$.  Moreover, the additivity
property~\eqref{EqnHinfoAdditive} ensures that \mbox{$\Hinfo(a,b) =
  \sum_{\ell=0}^{\DyaTot-1} \Hinfo(\dyadic_\ell, \dyadic_{\ell+1})$.}
Recall the definition~\eqref{EqnDefnDinfo} of the denoising increment
\mbox{$\Dinfo(s, t) \defn \Exs \| \DenTime_s(X_s) - \DenTime_t(X_t)
  \|_2^2$,} as well as the constant-factor approximation
property~\eqref{EqnConstantFactor}.  Combining these ingredients, our
dyadic partition yields the sandwich relation
\begin{align}
  \label{EqnNewSandwich}
  \frac{1}{2} \Hupper(a,b) \; \leq \; \Hinfo(a,b) \; \leq \Hupper(a,b)
  \qquad \mbox{where $\quad \Hupper(a,b) \defn \dfrac{1}{2} \sum
    \limits_{\ell=0}^{\DyaTot-1} \dfrac{\Dinfo(\dyadic_\ell,
      \dyadic_{\ell+1})}{\dyadic_\ell}$.}
\end{align}
which is the population-level basis for our estimators.

\paragraph{Monte Carlo estimates:}  Having reduced our problem
to estimating the function $\Hupper(a,b)$, we first describe how to
construct an unbiased Monte Carlo estimate, followed by a more
sophisticated truncated estimator to handle heavy tails.  Given a
sample $\Zvar \sim \Prob_Z$, consider the forward heat path $X_t =
\Zvar + B_t$, where $\{B_t \}_{t \geq 0}$ is Brownian motion.  We can
evaluate this forward heat path over the dyadic grid by adding
suitable Brownian increments to $\Zvar$, thereby obtaining the vector
$\{\Xvar_{\dyadic_{\ell}} \}_{\ell=0}^{\DyaTot}$.  Using this vector,
we can compute the statistic
\begin{align}
\label{EqnBasicQ}
\Qfun & \defn \frac{1}{2} \sum_{\ell = 0}^{\DyaTot - 1} \frac{ \left\|
  \DenTime_{\dyadic_\ell}(X_{\dyadic_\ell}) - \DenTime_{\dyadic_{\ell
      + 1}}(X_{\dyadic_{\ell + 1}}) \right\|_2^2 }{\dyadic_\ell},
\qquad \mbox{which satisfies $\Exs[\Qfun] = \Hupper(a,b)$ by
  construction.}
\end{align}
Thus, given $\numobs$ i.i.d. samples $\{\Zup{i} \}_{i=1}^\numobs$, we
can compute the statistic $\Qfunup{i}$ for each $\Zup{i}$, and thereby
form the naive Monte Carlo estimate $\frac{1}{m} \sum_{i=1}^\numobs
\Qfunup{i}$.

\paragraph{Tail-robust procedure:}  For tail-robustness,
here we analyze a slightly more sophisticated procedure that involves
truncation.  We assume a known constant $M_p$ such that the denoising
function satisfies the $p^{th}$-moment bound
\begin{align}
\label{EqnMomentCondition}
\left( \Exs \left[ \left\| \DenTime_{t}(X_{t}) \right\|_2^p \right]
\right)^{1/p} & \leq M_p.
\end{align}
For instance, this bound holds trivially with $M_p = R$ if
$\|\Zvar\|_2 \leq R$, but more generally, it allows for unbounded
supports with only polynomial tail decay.
  
Under the condition~\eqref{EqnMomentCondition} and for a given failure
probability $\eta \in (0,1)$, we analyze the truncated estimator
\begin{align}
\label{EqnDefnHhat}  
\Hhat(a,b) & \defn \frac{1}{\numobs} \sum_{i=1}^\numobs \min \{
\Qfunup{i}, \tau \} \qquad \mbox{where $\tau \defn \dfrac{4 M_p^2}{a}
  \left( \dfrac{ 3(p - 2)(\numobs - 1) }{ 14 \log(4 / \eta) }
  \right)^{2/p}$.}
\end{align}

The upper confidence correction~\eqref{EqnDefnHackErr} in our
guarantee involves the \emph{empirical variance estimate}
\begin{align}
\label{EqnDefnEmpBernstein}
  \Vhat \defn \frac{1}{\numobs - 1} \sum_{i = 1}^{\numobs} \left( \min
  \{ \Qfunup{i}, \tau \} - \Hhat(a,b) \right)^2.
\end{align}

\mygraybox{
\begin{proposition}[Data-dependent sandwich on \dgc function]
\label{PropDataSingle}
Under the moment condition~\eqref{EqnMomentCondition}, for any $\eta
\in (0,1)$, the \dgc function $\Hinfo(a,b)$ satisfies the
data-dependent sandwich
\begin{subequations}
\begin{align}
\label{EqnDataSingle}  
\Hinfo(a,b) \; \stackrel{(I)}{\leq} \; \Hhat(a,b) + \HackErr \;
\stackrel{(II)}{\leq} \; 2 \, \big \{ \Hinfo(a,b ) + \HackErr \big \}
\qquad \mbox{with probability at least $1 - \eta$,}
\end{align}
where the upper confidence correction is given by
\begin{align}
  \label{EqnDefnHackErr}
\HackErr & \defn \sqrt{ \frac{ 2 \Vhat \log(4 / \eta) }{\numobs} } +
\frac{8 M_p^2}{a} \left( \frac{ 7 \log(4 / \eta) }{ 3(\numobs - 1) }
\right)^{1 - 2/p}.
\end{align}
\end{subequations}
\end{proposition}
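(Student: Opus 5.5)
The argument combines the population sandwich~\eqref{EqnNewSandwich}, $\tfrac12\Hupper(a,b)\le\Hinfo(a,b)\le\Hupper(a,b)$, with a two-sided concentration bound for the truncated mean. Concretely, I will show that with probability at least $1-\eta$,
\begin{align*}
|\Hhat(a,b) - \Hupper(a,b)| \le \HackErr .
\end{align*}
Given this, inequality (I) is immediate: $\Hinfo(a,b) \le \Hupper(a,b) \le \Hhat(a,b) + \HackErr$. For inequality (II), I use $\Hhat(a,b) \le \Hupper(a,b) + \HackErr \le 2\Hinfo(a,b) + \HackErr$, so that $\Hhat + \HackErr \le 2\{\Hinfo + \HackErr\}$. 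Everything therefore reduces to the deviation bound. I split it into a bias part, $\Exs[\Qfun] - \Exs[\min\{\Qfun,\tau\}]$, and a fluctuation part, handled by an empirical Bernstein inequality.

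\emph{Moment bound on $\Qfun$.} Since $\dyadic_\ell \ge a$, we have $\Qfun \le \frac{1}{2a}\sum_\ell \|\DenTime_{\dyadic_\ell} - \DenTime_{\dyadic_{\ell+1}}\|_2^2$. Crude bounds via $\|u-v\|^2 \le 2\|u\|^2+2\|v\|^2$ would pick up a factor of $\DyaTot$, which the constant $4M_p^2/a$ does not allow, so I need a telescoping argument instead. Write $\mu_\ell \defn \DenTime_{\dyadic_\ell}(X_{\dyadic_\ell})$. The reverse martingale property~\eqref{EqnReverseMartingale} makes $\{\mu_\ell\}$ a martingale read in the reverse direction, so $S \defn \sum_\ell \|\mu_\ell - \mu_{\ell+1}\|^2$ is its square function. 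Burkholder's inequality would give $\|S\|_{p/2} \lesssim \|\mu_0 - \mu_\DyaTot\|_p^2 \le 4M_p^2$, but with constants depending on $p$. I would first try to obtain the clean constant directly. If that fails, I would settle for a bound of the form $\Exs[\Qfun^{p/2}]^{2/p} \le C_p M_p^2/a$ and track how $C_p$ enters. In either case the target is $\Exs[\Qfun^{p/2}] \le (2M_p^2/a)^{p/2}$ up to constants.

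\emph{Bias.} For the nonnegative variable $\Qfun$ and $q = p/2 > 1$, Markov's inequality gives $\Exs[(\Qfun - \tau)_+] \le \Exs[\Qfun^{q}]/\tau^{q-1}$. Substituting the definition of $\tau$ from~\eqref{EqnDefnHhat} should produce exactly the second term of $\HackErr$, namely $\frac{8M_p^2}{a}\big(\frac{7\log(4/\eta)}{3(\numobs-1)}\big)^{1-2/p}$. The factor $(p-2)$ inside $\tau$ comes from optimizing the sum of the bias and the range term of the Bernstein bound. This bias is one-sided: it only affects the direction $\Hupper \le \Hhat + \HackErr$, while the reverse direction holds automatically since $\min\{\Qfun,\tau\} \le \Qfun$.

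\emph{Fluctuation.} The truncated variables lie in $[0,\tau]$, so I apply the Maurer--Pontil empirical Bernstein inequality with confidence $\eta/2$ on each side. This gives a deviation at most $\sqrt{2\Vhat\log(4/\eta)/\numobs} + \frac{7\tau\log(4/\eta)}{3(\numobs-1)}$. Plugging in $\tau$ turns the range term into $\frac{4M_p^2}{a}(\cdot)^{1-2/p}$ times a $p$-dependent factor, which is absorbed into the second term of $\HackErr$.

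The main obstacle is getting the $p$-th moment of the sum of dyadic increments with the stated constant and no dependence on $\DyaTot$. This requires the martingale (Burkholder/square-function) structure, and the constants must match the specific form of $\tau$; I expect the algebra here to be the delicate part.
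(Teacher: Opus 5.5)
Your reduction is the same as the paper's. You prove the two-sided deviation $|\Hupper(a,b)-\Hhat(a,b)|\le\HackErr$, read off (I) and (II) from the population sandwich, bound the truncation bias by Markov's inequality using the $p/2$-moment of $Q$, and apply Maurer--Pontil at level $\eta/2$ on each side.

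The gap is the moment step, which you flag as the main obstacle but leave unresolved. The factor of $\DyaTot$ appears only because you replaced every weight $1/\dyadic_\ell$ by $1/a$; keep the weights instead. For $\ell<\DyaTot$ you have $\dyadic_\ell=2^\ell a$, so $\sum_{\ell=0}^{\DyaTot-1}1/\dyadic_\ell\le 2/a$. Write $\Delta_\ell\defn\DenTime_{\dyadic_\ell}(X_{\dyadic_\ell})-\DenTime_{\dyadic_{\ell+1}}(X_{\dyadic_{\ell+1}})$. The triangle inequality in $L^p$ gives $\|\Delta_\ell\|_{L^p}\le 2M_p$, and Minkowski's inequality in $L^{p/2}$, applied term by term, gives
\[
\big(\mathbb{E}[Q^{p/2}]\big)^{2/p}\;\le\;\frac12\sum_{\ell=0}^{\DyaTot-1}\frac{\|\Delta_\ell\|_{L^p}^2}{\dyadic_\ell}\;\le\;2M_p^2\sum_{\ell=0}^{\DyaTot-1}\frac{1}{2^\ell a}\;\le\;\frac{4M_p^2}{a}.
\]
This has no dependence on $\DyaTot$ or $p$ and uses no martingale structure. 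It is exactly the paper's argument, and $4M_p^2/a$ is the constant built into $\tau$.

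Your Burkholder fallback does not prove the statement as written. The square-function constant depends on $p$; for instance $p-1$ for $p\ge 2$ gives roughly $2(p-1)^2M_p^2/a$, already $18M_p^2/a$ at $p=4$. Because $\tau$ is pinned to $4M_p^2/a$, a moment bound $CM_p^2/a$ with $C>4$ multiplies the bias by $(C/4)^{p/2}$, and that cannot be absorbed into $\HackErr$.

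Your constant bookkeeping also needs adjusting: the second term of $\HackErr$ must cover the bias and the Bernstein range term together. Set $x\defn 3(\numobs-1)/(7\log(4/\eta))$ and $y\defn(p-2)/2$. The choice of $\tau$ makes the bias $\frac{4M_p^2}{a}y^{-(1-2/p)}x^{-(1-2/p)}$ and the range term $\frac{4M_p^2}{a}y^{2/p}x^{-(1-2/p)}$. Their sum is $\frac{p}{p-2}\big(\frac{p-2}{2}\big)^{2/p}\cdot\frac{4M_p^2}{a}x^{-(1-2/p)}$. The prefactor is at most $2$, with equality at $p=4$, which yields exactly $\frac{8M_p^2}{a}\big(\frac{7\log(4/\eta)}{3(\numobs-1)}\big)^{1-2/p}$; neither piece alone equals that term.

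Finally, the lower deviation $\Hhat\le\Hupper+\HackErr$ is not automatic. The bias merely has the favorable sign there, and you still need the empirical Bernstein bound to get it.
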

}


\subsubsection{A data-certified sampling guarantee}
\label{SecSingleCertified}
When combined with~\Cref{CorNewSingle}, \Cref{PropDataSingle} has an
immediate practical consequence that is very precise.  For a given
target accuracy $\varepsilon > 0$, suppose that we are given an
initialization $X_T \sim \Qprob_T$ such that $\KL(\Prob_T \|
\Qprob_\Tfinal) \leq \varepsilon/2$.  Here we specify a simple
data-dependent procedure whose output has final KL error at most
$\varepsilon$ with probability at least $1 - \eta$. \\

\mygraybox{\paragraph{Data-dependent procedure with certification:}
  Given i.i.d. samples $\{\Zup{i} \}_{i=1}^\numobs$ and a failure
  probability $\eta \in (0,1)$, we perform the following steps:
\begin{enumerate}
\item[(1)] Using the definitions~\eqref{EqnDefnHhat}
  and~\eqref{EqnDefnHackErr}, compute the estimate $\Hhat(\Tinit,
  \Tfinal)$ and confidence correction $\HackErr$.
\item[(2)] Form the empirical iteration count $\Nhat \defn \left
  \lceil \dfrac{4}{\varepsilon} \; (\Hhat(\Tinit, \Tfinal) + \HackErr)
  \: \log(\Tfinal/\Tinit) \right \rceil$.
\item[(3)] Run the \SIEulerRho procedure from~\Cref{CorNewSingle} with
  iteration budget $\Nhat$, and return the output $\Xhat_{\Nhat} \sim
  \Qprob_\Tinit$.
\end{enumerate}
}

We then have the sequence of bounds
\begin{subequations}
\begin{align}
  \KL( \Prob_\Tinit \| \Qprob_\Tinit) \; \stackrel{(i)}{\leq} \;
  \frac{2 \Hinfo(\Tinit, \Tfinal) \log(\Tfinal/\Tinit) }{\Nhat} +
  \frac{\varepsilon}{2} & \; \stackrel{(ii)}{\leq} \;
  \frac{\varepsilon}{2} \; \frac{\Hinfo(\Tinit, \Tfinal)}{\big[
      \Hhat(\Tinit, \Tfinal) + \HackErr \big]} + \frac{\varepsilon}{2}
  \notag \\
\label{EqnCertBound}  
  & \; \stackrel{(iii)}{\leq} \; \frac{\varepsilon}{2} +
\frac{\varepsilon}{2} \; = \; \varepsilon,
\end{align}
where step (i) follows from the bound~\eqref{EqnNewSingle} with
iterations $\Nhat$, along with the initialization assumption; step
(ii) follows from our choice of $\Nhat$; and the inequality (iii)
holds with probability at least $1 - \eta$, with the bound (I) from
the guarantee~\eqref{EqnDataSingle} in~\Cref{PropDataSingle}.

Moreover, note that the bound (II) from equation~\eqref{EqnDataSingle}
guarantees that the empirical iteration count $\Nhat$ is not much
larger than the oracle requirement $\Nideal = \lceil
\frac{4}{\varepsilon} \Hinfo(\Tinit, \Tfinal) \log(\Tfinal/\Tinit)
\rceil$.  In particular, we have
\begin{align}
\label{EqnNhatBound}
  \Nhat & \leq \left \lceil \dfrac{8}{\varepsilon} \big \{
  \Hinfo(\Tinit, \Tfinal) + \HackErr \big \} \: \log(\Tfinal/\Tinit)
  \right \rceil \; \leq 2 \Nideal + \left \lceil
  \dfrac{8}{\varepsilon} \: \HackErr \log(\Tfinal/\Tinit) \right
  \rceil,
\end{align}
\end{subequations}
so that we have lost at most a factor of two, along with the upper
confidence correction.

\subsubsection{Robustness to learned score functions:}

As noted following~\Cref{ThmMaster}, in practice, diffusion algorithms
are run with estimated score functions $\scorehat_t$, or equivalently
with estimated denoisers $\DenTimeHat_t$.  Recall that our estimator
is built on the sandwich relation~\eqref{EqnNewSandwich} involving
$\Hupper(a,b)$.  Here we describe the extension of this sandwich
relation to the statistic
\begin{subequations}
\begin{align}
\label{EqnModifiedHupper}  
\Hmod(a,b) \defn \dfrac{1}{2} \sum \limits_{\ell=0}^{\DyaTot-1}
\dfrac{\DinfoTil(\dyadic_\ell, \dyadic_{\ell+1})}{\dyadic_\ell}.
\end{align}
This function is based on the modified denoising increment
$\DinfoTil(s, t) \defn \Exs \| \DenTimeHat_s(X_s) - \DenTimeHat_t(X_t)
\|_2^2$ that depends on the estimated denoisers $\DenTimeHat_t$, as
opposed to the exact ones.  Finally, to track the denoiser errors, we
define
\begin{align*}
  \DenError(a, b) & \defn \sum_{\ell = 0}^{\DyaTot}
  \frac{\Exs \| \DenTime_{\dyadic_\ell}(X_{\dyadic_\ell})
    - \DenTimeHat_{\dyadic_\ell}(X_{\dyadic_\ell}) \|_2^2}
  {\dyadic_\ell}.
\end{align*}
\end{subequations}
With these definitions, we can prove the \emph{perturbed sandwich
relation}
\begin{align}
\label{EqnRobustSandwich}
\frac{1}{2} \left( \sqrt{\Hmod(a, b)} - \sqrt{3 \DenError(a, b)}
\right)_+^2 & \leq \Hinfo(a, b) \leq \left( \sqrt{\Hmod(a, b)} +
\sqrt{3 \DenError(a, b)} \right)^2.
\end{align}
This bound is the natural generalization of the sandwich
relation~\eqref{EqnNewSandwich} based on $\Hupper(a, b)$; it reduces to
it in the special case of zero denoiser error.  Recall
that the sandwich relation~\eqref{EqnNewSandwich} was the basis for
the tail-robust Monte Carlo procedure given in~\Cref{PropDataSingle}.
The modified sandwich~\eqref{EqnRobustSandwich} allows us to
pursue this same approach in the learned score setting.
See~\Cref{SecProofEqnRobustSandwich} for the proof.


\subsection{Information-theoretic upper bounds and model-specific consequences}
\label{SecInfoUpper}

Our primary interest---and the result of most practical relevance---is
the data-based bound on the \dgclong function $\Hinfo(\Tinit,
\Tfinal)$ from~\Cref{PropDataSingle}.  For conceptual purposes, on the
other hand, analytical upper bounds can provide useful insight.
Accordingly, in this section, we explore various analytical upper
bounds; derive their consequences for specific model classes; and
discuss connections to related work.

Recalling the equivalent representation~\eqref{EqnHinfoInfoRep} of
$\Hinfo$, we have the sequence of upper bounds
\begin{align}
\label{EqnWeaker}
\Hinfo(\Tinit, T) & \; \stackrel{(i)}{\leq} \; \Info(Z; X_\Tinit) -
\Info(Z; X_T) + \frac{\hfun(T)}{T} \; \stackrel{(ii)}{\leq} \Info(Z;
X_\Tinit) + \frac{\hfun(T)}{T},
\end{align}
where we have used the non-negativity of the MSE function $\hfun$ and
mutual information to drop terms in each step.  The term $\hfun(T)/T$
is straightforward to control, so that the remaining issue is to upper
bound either the \emph{information increment} $\Info(Z; X_\Tinit) -
\Info(Z; X_\Tfinal)$, or simply the \emph{terminal information}
$\Info(Z; X_\Tinit)$.

In the remainder of this section, we derive various upper bounds on
$\Hinfo(\Tinit, \Tfinal)$, either by controlling the information
increment, or the terminal information.  We discuss how these bounds
recover various guarantees on diffusion sampling from past work.

\subsubsection{Covariance-based bounds}

We begin with some simple yet informative upper bounds that depend
only on the covariance structure of the latent variable $\Zvar$, and
allow us to rederive some sampling guarantees from past work in a
direct manner.
\mygraybox{
\begin{proposition}
\label{PropCovariance}
  In terms of the eigenvalues $\{\eigval_j\}_{j=1}^d$ of $\CovZ \defn
  \cov(Z)$, we have the upper bounds
  \begin{align}
\label{EqnCovariance}    
\Hinfo(\Tinit, T) & \leq \frac{1}{2} \sum_{j=1}^d \log \left \{
\frac{1 + (\eigval_j/\Tinit)}{1 + (\eigval_j/T)} \right \} +
\frac{\hfun(T)}{T} \; \leq \; \frac{1}{2} \log \det \Big( \IdMat +
\frac{\CovZ}{\Tinit} \Big) + \frac{\hfun(T)}{T}.
\end{align}
\end{proposition}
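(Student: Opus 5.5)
Starting from the weakened bound~\eqref{EqnWeaker}(i), namely
\begin{align*}
\Hinfo(\Tinit, T) \leq \Info(Z; X_\Tinit) - \Info(Z; X_T) + \frac{\hfun(T)}{T},
\end{align*}
the task reduces to bounding the information increment $\Info(Z; X_\Tinit) - \Info(Z; X_T)$ by the Gaussian value $\frac{1}{2}\sum_j \log\frac{1+\eigval_j/\Tinit}{1+\eigval_j/T}$. The route is the I--MMSE identity combined with the Gaussian (linear-estimator) bound on the MMSE. In heat-time parameterization, with signal-to-noise ratio $\gamma = 1/t$, the I--MMSE identity~\cite{GuoEtAl05} gives $\frac{d}{dt}\Info(Z;X_t) = -\frac{\hfun(t)}{2t^2}$. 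Integrating yields
\begin{align*}
\Info(Z; X_\Tinit) - \Info(Z; X_T) = \frac{1}{2}\int_\Tinit^T \frac{\hfun(t)}{t^2}\, dt.
\end{align*}
This is the same identity underlying~\eqref{EqnHinfoInfoRep}, so I can take it as available.

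Next, bound $\hfun(t)$ by the MSE of the best affine estimator of $Z$ from $X_t = Z + B_t$. Since $\DenTime_t$ is the MMSE estimator, it does at least as well as the linear MMSE estimator $\Exs[Z] + \CovZ(\CovZ + t\IdMat)^{-1}(X_t - \Exs[Z])$. The error of that estimator is $\Tr\big(\CovZ - \CovZ(\CovZ+t\IdMat)^{-1}\CovZ\big)$. Diagonalizing $\CovZ$ gives
\begin{align*}
\hfun(t) \leq \sum_{j=1}^d \frac{t\,\eigval_j}{t+\eigval_j}.
\end{align*}
This step only uses finite second moments of $Z$, which is the standing assumption of \Cref{ThmMaster}.

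Plugging in and integrating term by term,
\begin{align*}
\frac{1}{2}\int_\Tinit^T \frac{\eigval_j}{t(t+\eigval_j)}\,dt = \frac{1}{2}\Big[\log\frac{t}{t+\eigval_j}\Big]_\Tinit^T = \frac{1}{2}\log\frac{1+\eigval_j/\Tinit}{1+\eigval_j/T},
\end{align*}
which gives the first inequality in~\eqref{EqnCovariance}. The second inequality is immediate: each term satisfies $\log\{(1+\eigval_j/\Tinit)/(1+\eigval_j/T)\}\le \log(1+\eigval_j/\Tinit)$ because $\eigval_j \ge 0$, and summing over $j$ gives $\frac{1}{2}\log\det(\IdMat+\CovZ/\Tinit)$.

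The only delicate point is justifying the I--MMSE integral form, including the sign and the chain-rule factor $d\gamma/dt = -1/t^2$. If I prefer to avoid it, an alternative is to start directly from the definition~\eqref{EqnDefnHinfo} and integrate by parts to get $\Hinfo(\Tinit,T) = \frac{\hfun(T)}{2T} - \frac{\hfun(\Tinit)}{2\Tinit} + \frac{1}{2}\int_\Tinit^T \hfun(t)/t^2\,dt$. The same linear-estimator bound on $\hfun$ then applies inside the integral, and dropping the negative boundary term recovers the claim, in fact with the tighter constant $\hfun(T)/(2T)$. I would use this route, since it relies only on the definition and the differentiability assumption on $\hfun$.
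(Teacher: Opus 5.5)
Your proof is correct, and it reaches the same bound by a genuinely different mechanism than the paper.

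The paper argues at the level of entropies. Using the coupling $\Zvar \to \Xvar_\Tinit \to \Xvar_T$, it writes the increment as
$\Info(\Zvar;\Xvar_\Tinit \mid \Xvar_T) = \Ent(\Xvar_\Tinit\mid\Xvar_T) - \Ent(\Xvar_\Tinit \mid \Zvar,\Xvar_T)$.
It evaluates the Brownian-bridge entropy exactly. It then bounds $\Ent(\Xvar_\Tinit\mid\Xvar_T)$ by the Gaussian maximum entropy of the linear residual $\Xvar_\Tinit - \Amat\Xvar_T$. The determinant $\frac{1}{2}\log\det[(\IdMat+\CovZ/\Tinit)(\IdMat+\CovZ/T)^{-1}]$ therefore appears in one step, with no integration over $t$, and is then combined with \eqref{EqnWeaker}(i).

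You instead place the Gaussian-extremality step at the level of estimation error. You compare $\hfun(t)$ pointwise with the linear-MMSE error and integrate against $dt/(2t^2)$.

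Your integration-by-parts variant is the more elementary of the two. It is exactly the first half of the paper's derivation of \eqref{EqnHinfoInfoRep}, stopped before $\frac{1}{2}\int_\Tinit^T \hfun(t)/t^2\,dt$ is converted into mutual information. As a result it needs neither differential entropies nor the I--MMSE identity, which the paper relies on implicitly to reach \eqref{EqnWeaker}. It uses only second moments and the standing assumption on $\hfun'$, and it delivers the sharper terminal term $\hfun(T)/(2T)$. Moving the derivative off $\hfun$ is what makes the pointwise comparison usable, since $\hfun'$ itself cannot in general be dominated pointwise by its Gaussian counterpart.

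What the paper's route buys in exchange is a direct information-theoretic statement about the increment $\Info(\Zvar;\Xvar_\Tinit)-\Info(\Zvar;\Xvar_T)$. This is the same condition-on-$\Xvar_T$ style it later uses for \Cref{PropGoodRateDistor}, and your I--MMSE version recovers that statement as well.
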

}
\noindent This corollary recovers and sharpens a number of results
from past work; let us discuss a few here.

\paragraph{Worst-case linear scaling in dimension:}

Initial results on the iteration complexity of diffusion sampling
established polynomial but superlinear dependence on dimension
(e.g.,~\cite{LeeEtAl22,CheEtAl23a}).  Later work then reduced the
scaling to linear in dimension~\cite{ConEtAl23,BenEtAl24,LiEtAl24},
which is optimal in a worst-case sense.  Here we observe that this
linear scaling in dimension $\usedim$ is an immediate consequence
of~\Cref{PropCovariance}, since
\begin{align}
\label{EqnInterCovBound}
\log \det \Big( \IdMat + \frac{1}{\Tinit} \CovZ \Big) & \;
\stackrel{(i)}{\leq} \; \usedim \: \log \Big(1 + \frac{\Tr
  \CovZ}{\usedim \Tinit} \Big) \; \stackrel{(ii)}{\leq} \; \usedim \:
\log \Big(1 + \frac{\OpNorm{\CovZ}}{\Tinit} \Big),
\end{align}
where step (i) follows from Jensen's inequality; whereas step (ii)
follows since $\Tr \CovZ \leq \usedim \OpNorm{\CovZ}$.

\paragraph{Bounded $R$-models:}
A frequent assumption
(e.g.,~\cite{Bor22,LeeEtAl23,LiEtAl24,BenEtAl24,GatEtAl26}) in
analysis of diffusion-based sampling is that $\Zvar$ is uniformly
bounded, say as $\|\Zvar\|_2 \leq R$ for some radius $R$.  In this
case, we can be fully explicit in terms of the triple $(R, \Tinit,
\usedim)$.  In particular, we have $\Tr \CovZ \; \leq \; \Exs
\|\Zvar\|_2^2 \leq R^2$, and $\hfun(T) \leq \Tr \CovZ \leq R^2$.
Consequently, for any $\Tfinal \geq R^2$, we have the upper bound
$\Hinfo(\Tinit, \Tfinal) \leq 1 + \frac{\usedim}{2} \log \big( 1 +
\frac{R^2}{\usedim \Tinit} \big)$.  This bound exhibits linear scaling
in $\usedim$, and logarithmic scaling in the ratio $R^2/\Tinit$,
consistent with the sharpest results on bounded support
models~\cite{LiEtAl24}.

\paragraph{Dimension-independence:}  If one is willing to assume
that the ratio $R^2/\Tinit$ is bounded independently of dimension,
then our results yield iteration complexity that is
dimension-independent.  In particular, the inequality $\log(1 + t)
\leq t$, applied to the inequality following from step (i), yields the
upper bound $\log \det( \IdMat + \frac{1}{\Tinit} \CovZ ) \leq \Tr
\CovZ / \Tinit \leq R^2/\Tinit$.  Combining the ingredients yields an
iteration complexity \mbox{$\Nscore(\varepsilon) \asymp (R^2/\Tinit)
  \;(1/\varepsilon)$} for sampling in KL divergence.  This should be
  compared with the recent paper~\cite{GatEtAl26}, which analyzes a
  higher-order flow solver with iteration complexity bounded as
  $(R^2/\Tinit) \; \polylog(d, R^2/\Tinit, \varepsilon)$.  By
  comparison, our guarantee for the simpler Euler method fully removes
  the dimension-dependence, at the price of polynomial instead of
  logarithmic scaling in $1/\varepsilon$.

\paragraph{Alternative route to data-based certification:}

The data-based certification procedure in~\Cref{SecSingleCertified}
requires samples of $\Zvar$, and if we also incorporate score-based
errors, these samples must not be used to fit the score functions.
(This hold-out property is required so as to ensure that our Monte
Carlo estimates of the denoising increments are unbiased.)  In
contrast, the bounds from~\Cref{PropCovariance} depend only on the
eigenvalues of the covariance matrix $\CovZ$, which can be directly
estimated from samples $\Zvar$, regardless of whether or not they were
used in the score training process.  The trade-off is that~\Cref{PropDataSingle} targets the finer-grained measure
$\Hinfo(\Tinit, \Tfinal)$, as opposed to the cruder covariance-based bound
from~\Cref{PropCovariance}.


\subsubsection{Rate-distortion and metric entropy}
\label{SecGoodRateDistor}

The covariance-based bound~\eqref{EqnCovariance} is relatively crude,
involving only second-moment structure.  Here we describe a more
refined bound that connects to both rate-distortion theory and
metric entropy.  These bounds are based on a (possibly stochastic)
encoding $\Zhat$, either based on the pair $(\Zvar, X_\Tfinal)$ or on
the variable $\Zvar$ alone.  We represent these encodings by
conditional distributions of the form $\Prob_{\Zhat \mid \Zvar,
  X_\Tfinal}$ and $\Prob_{\Zhat \mid \Zvar}$.  With this notation, we
have the following guarantees:
\mygraybox{
\begin{proposition}[Stochastic encodings and rate-distortion]
\label{PropGoodRateDistor}
We have the upper bound
\begin{subequations}
\begin{align}
  \label{EqnGoodRateDistor}
  \Hinfo(\delta, T) & \leq \inf_{\Prob_{\Zhat \mid \Zvar, X_T}}
  \left\{ \ShanInfo(\Zvar; \Zhat \mid X_T) + \frac{1}{2} \left(
  \frac{1}{\delta} - \frac{1}{T} \right) \Exs \|\Zvar - \Zhat\|_2^2
  \right\} + \frac{\hfun(T)}{T}.
\end{align}
By ignoring $\Xvar_T$, we obtain the weaker upper bound
\begin{align}
  \label{EqnGoodRateDistorWeak}
 \Hinfo(\delta, T) & \leq \inf_{\Prob_{\Zhat \mid \Zvar}} \left\{
 \ShanInfo(\Zvar; \Zhat) + \frac{1}{2 \delta} \Exs \|\Zvar -
 \Zhat\|_2^2 \right\} + \frac{\hfun(T)}{T}.
\end{align}
\end{subequations}
\end{proposition}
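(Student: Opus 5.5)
Starting from the information representation~\eqref{EqnHinfoInfoRep} and discarding the nonpositive term $-\hfun(\delta)/(2\delta)$, as in step (i) of~\eqref{EqnWeaker}, we obtain
\begin{align*}
\Hinfo(\delta, T) \leq \Info(\Zvar; X_\delta) - \Info(\Zvar; X_T) + \frac{\hfun(T)}{T}.
\end{align*}
We use the Markov chain $\Zvar \rightarrow X_\delta \rightarrow X_T$, so that $\Info(\Zvar; X_\delta, X_T) = \Info(\Zvar; X_\delta)$. The chain rule then turns the information increment into the conditional information $\Info(\Zvar; X_\delta \mid X_T)$. The problem therefore reduces to showing that, for any encoding $\Prob_{\Zhat \mid \Zvar, X_T}$ with $\Zhat$ generated conditionally independently of $X_\delta$ given $(\Zvar, X_T)$,
\begin{align*}
\Info(\Zvar; X_\delta \mid X_T) \leq \Info(\Zvar; \Zhat \mid X_T) + \frac{1}{2}\Big(\frac{1}{\delta} - \frac{1}{T}\Big) \Exs\|\Zvar - \Zhat\|_2^2.
\end{align*}

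To bound the conditional information, we use the chain rule on $\Info(\Zvar, \Zhat; X_\delta \mid X_T)$ in two ways:
\begin{align*}
\Info(\Zvar; X_\delta \mid X_T) \leq \Info(\Zvar,\Zhat; X_\delta \mid X_T) = \Info(\Zhat; X_\delta \mid X_T) + \Info(\Zvar; X_\delta \mid X_T, \Zhat).
\end{align*}
Data processing along $\Zhat - (\Zvar, X_T) - X_\delta$ gives $\Info(\Zhat; X_\delta \mid X_T) \leq \Info(\Zhat; \Zvar, X_\delta \mid X_T) = \Info(\Zhat; \Zvar \mid X_T)$. The last equality holds because $\Zhat$ is conditionally independent of $X_\delta$ given $(\Zvar, X_T)$.

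The remaining term $\Info(\Zvar; X_\delta \mid X_T, \Zhat)$ is the main computation, and it is a Gaussian channel calculation. Conditionally on $\Zvar$, the bridge gives $X_\delta \mid (\Zvar, X_T) \sim \Normal\big(\tfrac{\delta}{T}X_T + (1-\tfrac{\delta}{T})\Zvar,\ \tfrac{\delta(T-\delta)}{T}\IdMat\big)$, and $\Zhat$ adds no further dependence. Hence, given $(X_T, \Zhat)$, the map $\Zvar \mapsto X_\delta$ is an additive Gaussian channel with input $(1-\delta/T)\Zvar$ and noise variance $\sigma^2 = \delta(T-\delta)/T$. We bound the mutual information by the expected KL divergence between the true conditional law and the reference Gaussian obtained by substituting $\Zhat$ for $\Zvar$. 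This is the standard golden formula: mutual information is at most the KL divergence to any reference output law, here a Gaussian centered at $\tfrac{\delta}{T}X_T + (1-\tfrac{\delta}{T})\Zhat$. The bound is
\begin{align*}
\frac{(1-\delta/T)^2\,\Exs\|\Zvar-\Zhat\|_2^2}{2\sigma^2} = \frac{(T-\delta)}{2\delta T}\Exs\|\Zvar - \Zhat\|_2^2 = \frac12\Big(\frac1\delta - \frac1T\Big)\Exs\|\Zvar-\Zhat\|_2^2,
\end{align*}
which is exactly the required coefficient. Taking the infimum over encodings gives~\eqref{EqnGoodRateDistor}.

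For the weak form~\eqref{EqnGoodRateDistorWeak}, restrict to $\Zhat$ that depend on $\Zvar$ only. Then $\Info(\Zvar; \Zhat \mid X_T) \leq \Info(\Zvar; \Zhat)$, because $\Zhat - \Zvar - X_T$ is Markov: by the chain rule, $\Info(\Zhat; \Zvar \mid X_T) = \Info(\Zhat; \Zvar, X_T) - \Info(\Zhat; X_T) \leq \Info(\Zhat;\Zvar)$. We also drop the $-1/T$ term. The main obstacle is getting the conditional-independence bookkeeping right, since $\Zhat$ must be generated from $(\Zvar, X_T)$ without access to $X_\delta$, and identifying the Brownian bridge variance so that the constant comes out exactly.
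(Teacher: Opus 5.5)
Your proof is correct, and its skeleton matches the paper's. You bound $\Hinfo(\delta,T)$ by $I(Z;X_\delta\mid X_T)+\hfun(T)/T$ using the information representation and the Markov chain; the paper leaves this step implicit. You then split off $I(Z;\widehat{Z}\mid X_T)$ and bound the residual $I(Z;X_\delta\mid X_T,\widehat{Z})$.

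The genuine difference is in that last step. The paper passes to the localization path $Y_\lambda=\lambda Z+B_\lambda$ and uses the conditional I--MMSE identity to write the residual exactly as $\tfrac12\int_{1/T}^{1/\delta}\mathbb{E}\|Z-\mathbb{E}[Z\mid \widehat{Z},X_T,Y_\lambda]\|_2^2\,d\lambda$. It then bounds each MMSE by the error of the admissible estimator $\widehat{Z}$. You instead bound the residual in one shot with the golden-formula characterization of mutual information. Your ingredients are the Brownian-bridge law $X_\delta\mid(Z,X_T)\sim\mathcal{N}\big(\tfrac{\delta}{T}X_T+(1-\tfrac{\delta}{T})Z,\ \tfrac{\delta(T-\delta)}{T}\IdMat\big)$ and the plug-in reference obtained by substituting $\widehat{Z}$ for $Z$. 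Both routes give the same constant.

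Your route is more elementary. It needs only the bridge covariance and the Gaussian KL formula, with no I--MMSE identity or localization coupling. It also makes the coefficient transparent: it is the squared mean shift $(1-\delta/T)^2$ divided by twice the bridge variance. The paper's route produces an exact integral representation before any estimator is plugged in. That leaves room for sharper bounds using precision-dependent estimators built from $(\widehat{Z},X_T,Y_\lambda)$, and it reuses the I--MMSE/localization machinery found elsewhere in the paper.

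There is also a minor bookkeeping difference. The paper's split $I(Z;X_\delta\mid X_T)\le I(Z;X_\delta,\widehat{Z}\mid X_T)=I(Z;\widehat{Z}\mid X_T)+I(Z;X_\delta\mid \widehat{Z},X_T)$ needs no conditional independence. Your expansion of $I(Z,\widehat{Z};X_\delta\mid X_T)$ already uses conditional independence, together with data processing, at this stage. This costs nothing, because both arguments need $\widehat{Z}$ and $X_\delta$ to be conditionally independent given $(Z,X_T)$ for the Gaussian step anyway.
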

}
\noindent See~\Cref{SecProofPropGoodRateDistor} for the proof. \\

These inequalities can be related to the Shannon rate-distortion
functions $\ShanDist_{Z \mid X_T}$ and $\ShanDist_{Z}$, respectively.
Focusing on the second inequality, if we minimize over all encodings
$\Prob_{\Zhat \mid Z}$ such that $\Exs \|\Zhat - Z\|_2^2 \leq u$, then
the minimal value of $\ShanInfo(\Zhat; \Zvar)$ is equal to the rate
distortion function $\ShanDist_\Zvar(u)$.  Since this argument applies
for any $u \geq 0$, we find that $\Hinfo(\delta, T) \leq \inf_{u \geq
  0} \Big \{ \ShanDist_\Zvar(u) + \frac{u}{2 \Tinit} \Big \} +
\frac{\hfun(\Tfinal)}{\Tfinal}$.


\paragraph{Metric entropy bounds:}

When $\Zvar$ has compact support $\Zset$, we can
use~\Cref{PropGoodRateDistor} to derive bounds involving the
\emph{metric entropy} $\log M_2(t; \Zset)$, defined to be the
(logarithm of the) smallest number of Euclidean balls of radius $t$
whose union covers $\Zset$.  (See the standard sources~\cite{Wai19a}
for further background on metric entropy and its properties.)  For any
error $t > 0$, fix a covering $\{z^1, \ldots, z^M \}$ of $\Zset$ with
$M = M_2(t; \Zset)$ elements, and then define the encoding $\Zhat =
\arg \min_{j = 1, \ldots, M} \|Z - z^j\|_2$.  By construction, we have
$\Exs \|\Zhat - \Zvar\|_2^2 \leq t^2$, and since $\Zhat$ is a discrete
random variable taking at most $M$ values, we have $\ShanInfo(\Zvar;
\Zhat) \leq \Ent(\Zhat) \leq \log M$. Applying the
bound~\eqref{EqnGoodRateDistorWeak} with these ingredients, we find
that
\begin{subequations}
\begin{align}
  \label{EqnMetricBound} 
  \Hinfo(\delta, T) & \leq \inf_{t > 0} \Big \{ \log M_2(t; \Zset) +
  \frac{t^2}{2 \delta} \big \} + \frac{\hfun(T)}{T}.
\end{align}
This bound is useful because it connects directly to the metric
entropy literature, but it can be loose, since the bound $\Ent(\Zhat)
\leq \log M$ ignores any probabilistic structure present in the
encoding $\Zhat$.

\paragraph{Consequences for metric effective dimension:}
Let us say that a set $\Zset$ has polynomial metric entropy with
\emph{metric effective dimension} $\dmin$ if $\log M_2(t; \Zset)
\leq c_0 + \dmin \log \big(1 + \frac{c_1}{t} \big)$ for all $t \in
(0,1]$, where $(c_0, c_1)$ are universal constants.  Assuming
$\delta \leq 1$ and setting $t = \sqrt{\delta}$ in the
bound~\eqref{EqnMetricBound}, we find that
\begin{align*}
  \Hinfo(\Tinit, \Tfinal)
\leq 1 + c_0 + \dmin \log \Big(1 + \frac{c_1}{\sqrt{\Tinit}} \Big) +
\frac{\hfun(\Tfinal)}{\Tfinal}.
\end{align*}
Combined with our general sampling guarantee, this yields iteration
complexity that is linear in the metric effective dimension $\dmin$.
This matches and removes the logarithmic slack from the
intrinsic-dimension dependence established in recent DDPM
analyses~\cite{LiYan24,HuaEtAl24a}.  Since a compact
$\mdim$-dimensional regular manifold has metric dimension $\mdim$, it
matches results~\cite{PotEtAl24} for manifold-supported distributions,
while applying under the broader metric-entropy condition in the
paper~\cite{HuaEtAl24a}.

\paragraph{Consequences for Gaussian mixtures:}

Recent work has established rigorous guarantees for learning and
sampling Gaussian mixture models using diffusion methods
(e.g.,~\cite{ShaEtAl23,GatEtAl24,CheKonSha25,LiCaiWei26}).
\Cref{CorNewSingle} and~\Cref{PropGoodRateDistor} have concrete
implications for such models.  More precisely, suppose that $\Zvar$ is
a discrete random variable taking values in the subset $\{\mu_1,
\ldots, \mu_K \}$ for a collection of centers such that $\|\mu_k\|_2
\leq R$ for each $k \in \{1, \ldots, K\}$.  The distribution
$\Prob_\Tinit$ is then a $K$-component Gaussian mixture.  Setting
$\Zhat = Z$ in the bound~\eqref{EqnGoodRateDistorWeak}, we find that
$\Hinfo(\Tinit, \Tfinal) \leq \Info(Z; Z) + \frac{\hfun(T)}{T} =
\Ent(Z) + \frac{\hfun(T)}{T}$, where $\Ent$ is the Shannon entropy. As
a consequence, using the iteration complexity
bound~\eqref{EqnSingleBlockStrong} with $T = 2R^2$, we can sample to
$\varepsilon$ accuracy from the Gaussian mixture $\Prob_\Tinit$ using
at most
\begin{align}
\label{EqnGaussMixNscore}
\NscoreTot(\varepsilon) & \leq 2 + \log(2R^2/\Tinit) +
\frac{4}{\varepsilon} \big \{ \Ent(Z) + 1 \big \} \; \log(2
R^2/\Tinit)
\end{align}
\end{subequations}
score evaluations.  Note that $\Ent(Z) \leq \log K$, with equality
achieved when $Z$ has a uniform distribution, so that the
bound~\eqref{EqnGaussMixNscore} gives dimension-independent scaling
with logarithmic scaling in $K$, as in the paper~\cite{LiCaiWei26}.
This analysis can also be extended to the setting in which the
distribution is not a Gaussian mixture, but can be well-approximated
by one in the $\Wass_2$-distance; see~\Cref{SecApproxStructureMix} for
details.  This leads to approximation-theoretic guarantees that are
related to but distinct from the TV guarantees given in the
paper~\cite{LiCaiWei26}.


\subsubsection{Logarithmic dependence on Poincar\'{e} constant}

It is well known that various geometric properties of the target
distribution $\Prob_Z$ affect the iteration complexity of sampling.
The most restrictive property is that of strong log-concavity, which
can be substantially weakened via the notion of log-Sobolev or
\Poincare constants; see the book~\cite{Che25} for relevant
background.  Here we consider the \Poincare condition, which is the
weakest of the three.  We say that $\Zvar$ has \emph{\Poincare
constant} $\Pcar$ if
\begin{subequations}
\begin{align}
\label{EqnDefnPoincare}
\var(g(\Zvar)) & \leq \Pcar \Exs \|\nabla g(\Zvar)\|_2^2
\end{align}
for all differentiable functions $g: \real^d \rightarrow \real$.  In
the analysis here, we also assume that the density $p_\Zvar$ is
differentiable and satisfies a \emph{one-sided $L$-smoothness
condition}, meaning that $f(z) \defn - \log p_\Zvar(z)$ satisfies the
upper bound
\begin{align}
  \label{EqnLsmooth}
f(z + u) & \leq f(z) + \inprod{\nabla f(z)}{u} + \tfrac{L}{2} \|u\|_2^2
\qquad \mbox{for all $z, u \in \real^d$.}
\end{align}
\end{subequations}
The following result applies to a centered variable ($\Exs[\Zvar] =
0$), with $\Tinit \defn \frac{\varepsilon}{8 L \usedim}$, and \SIEuler
initialization $\Xhat_0 \sim \Normal(0, \Tfinal \IdMat)$ where $\Tfinal
\defn \Tinit + \frac{2 \Pcar \usedim}{\varepsilon}$.
\mygraybox{
  \begin{proposition}[Logarithmic dependence on \Poincare constant]
\label{PropPoincare}      
Under conditions~\eqref{EqnDefnPoincare} and~\eqref{EqnLsmooth} and
the specified initialization for any $\varepsilon \in (0,1)$, running
the \SIEuler scheme yields samples $\Xtil \sim \Qprob$ such that
\mbox{$\KL( \Prob_\Zvar \| \Qprob) \leq \varepsilon$} with iteration
complexity at most
\begin{align}
 \label{EqnPoincare}
\NscoreTot(\varepsilon) & \leq 1 + \frac{4}{\varepsilon} \; \Big \{
\usedim \log \big (1 + \frac{8 \usedim \kappa}{\varepsilon}) + 1 \Big
\} \log \left(1 + \frac{16 \kappa \usedim^2}{\varepsilon^2} \right),
\end{align}
where $\kappa \defn L \: \Pcar$ is the effective condition number.
\end{proposition}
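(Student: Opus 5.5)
The plan is to apply \Cref{CorNewSingle} on $[\Tinit,\Tfinal]$ with the stated choices of $\Tinit$ and $\Tfinal$. I would split the error budget into three parts: an initialization error of at most $\varepsilon/4$, a discretization error of at most $\varepsilon/2$, and a smoothing error of at most $\varepsilon/16$ for the gap between $\Prob_\Zvar$ and $\Prob_\Tinit$. I then bound $\Hinfo(\Tinit,\Tfinal)$ through \Cref{PropCovariance}, with the \Poincare constant controlling the covariance.

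\emph{Step 1 (covariance and initialization).} Applying~\eqref{EqnDefnPoincare} to the linear functions $g(z)=\langle e_j, z\rangle$ gives $\lambda_j \le \OpNorm{\CovZ}\le \Pcar$, and hence $\Tr\CovZ\le \usedim\Pcar$. Since $\Exs[\Zvar]=0$, the initialization bound gives $\KL(\Prob_\Tfinal\|\Gauss_\Tfinal)\le \Tr\CovZ/(2\Tfinal)\le \usedim\Pcar/(2\Tfinal)\le \varepsilon/4$, because $\Tfinal\ge 2\Pcar\usedim/\varepsilon$. No score evaluations are needed for this step.

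\emph{Step 2 (DGC bound).} By~\eqref{EqnCovariance} and~\eqref{EqnInterCovBound},
\begin{align*}
\Hinfo(\Tinit,\Tfinal) \le \tfrac{\usedim}{2}\log\big(1+\Pcar/\Tinit\big) + \hfun(\Tfinal)/\Tfinal
\le \usedim\log\big(1+\tfrac{8\usedim\kappa}{\varepsilon}\big) + 1,
\end{align*}
using $\Pcar/\Tinit = 8L\usedim\Pcar/\varepsilon$, $\hfun(\Tfinal)\le \Tr\CovZ\le \usedim\Pcar$, and $\Tfinal\ge \usedim\Pcar$ (since $\varepsilon<1$). Similarly,
\[
\log(\Tfinal/\Tinit) = \log\big(1 + 2\Pcar\usedim/(\varepsilon\Tinit)\big) = \log\big(1+16\kappa\usedim^2/\varepsilon^2\big).
\]
Choosing $\Nscore=\lceil \tfrac{4}{\varepsilon}\Hinfo\log(\Tfinal/\Tinit)\rceil$ in \Cref{CorNewSingle} makes the discretization term at most $\varepsilon/2$. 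This budget matches~\eqref{EqnPoincare}; the side condition $\Nscore\ge\log(\Tfinal/\Tinit)$ holds automatically because $\Hinfo$ is at least $1$-ish in the bound above and $\varepsilon<1$.

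\emph{Step 3 (from $\Prob_\Tinit$ to $\Prob_\Zvar$).} Condition~\eqref{EqnLsmooth} gives a pointwise density comparison. Writing $p_\Tinit(z)=\Exs\, p_\Zvar(z-B_\Tinit)$, Jensen's inequality yields
\[
\log p_\Tinit(z) \ge -\Exs f(z-B_\Tinit) \ge -f(z) - \tfrac{L\usedim\Tinit}{2},
\]
since the linear term has mean zero. Hence $\log\big(p_\Zvar/p_\Tinit\big)\le L\usedim\Tinit/2=\varepsilon/16$ pointwise. Taking $\Xtil=\Xhat_\MaxIter$, I would then decompose
\[
\KL(\Prob_\Zvar\|\Qprob) = \Exs_{\Prob_\Zvar}\big[\log(p_\Zvar/p_\Tinit)\big] + \Exs_{\Prob_\Zvar}\big[\log(p_\Tinit/q)\big].
\]
The first term is at most $\varepsilon/16$.

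\emph{Main obstacle.} The hard part is the second term in Step 3, because it is an expectation under $\Prob_\Zvar$ rather than under $\Prob_\Tinit$. I see two routes.
\begin{enumerate}
\item Extend the path-space argument in the proof of \Cref{ThmMaster} by one more conditional step, coupling $(\Zvar, X_\Tinit)$ against the algorithm's output and applying the chain rule together with data processing.
\item Use the pointwise bound from Step 3 directly, $p_\Zvar\le e^{\varepsilon/16}p_\Tinit$, which gives $\Exs_{\Prob_\Zvar}[\log(p_\Tinit/q)]$ in terms of $\KL(\Prob_\Tinit\|\Qprob_\Tinit)$, up to the factor $e^{\varepsilon/16}$ and a control of the negative part of $\log(p_\Tinit/q)$.
\end{enumerate}
I would try the second route first; it should give $\KL(\Prob_\Zvar\|\Qprob)\le \varepsilon/16 + e^{\varepsilon/16}\cdot\tfrac{3\varepsilon}{4}\le\varepsilon$. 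Making this step fully rigorous is the main obstacle.
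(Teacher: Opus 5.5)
\textbf{Gap in Step 3, and a budget that does not close.} Your Steps 1--2 and the density-ratio bound in Step 3 are sound. The Jensen argument giving $p_\Zvar \le e^{L\usedim\Tinit/2}\,p_\Tinit = e^{\varepsilon/16}\,p_\Tinit$ is a slicker route to the constant $b \le e^{\varepsilon/16}$ that the paper obtains from an explicit Gaussian integral.

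The gap is where you flagged it, and it is not only a matter of rigor. The inequality Route 2 aims at,
\[
\mathbb{E}_{\Prob_\Zvar}\big[\log(p_\Tinit/q)\big] \le b\,\KL(\Prob_\Tinit\|\Qprob),
\]
is false in general. Write $f = p_\Zvar/p_\Tinit \in [0,b]$ and $u = \log(p_\Tinit/q)$. The left side equals $\KL(\Prob_\Tinit\|\Qprob) + \mathbb{E}_{\Prob_\Tinit}[(f-1)u]$. On $\{u<0\}\cap\{f<1\}$ the integrand $(1-f)|u|$ is positive, and you only have an upper bound on $f$, so nothing makes it small. This cross term can be as large as order $\sqrt{\KL(\Prob_\Zvar\|\Prob_\Tinit)\,\KL(\Prob_\Tinit\|\Qprob)}$. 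That is order $\varepsilon$, the same as your main terms.

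A concrete counterexample: take $S = (\tfrac12,\tfrac12)$, $P = (\tfrac12+\eta,\tfrac12-\eta)$, $Q = (\tfrac12-\gamma,\tfrac12+\gamma)$, so $b = 1+2\eta$. Then $\mathbb{E}_P[\log(dS/dQ)] = 2\gamma^2 + 4\eta\gamma + O(\gamma^3)$, while $b\,\KL(S\|Q) = 2\gamma^2(1+2\eta) + O(\gamma^4)$. For $\eta=\gamma$ this is roughly $6\gamma^2$ against $2\gamma^2$.

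Route 1 does not rescue this. The output $\Xhat_\MaxIter$ is not passed through any kernel carrying $\Prob_\Tinit$ to $\Prob_\Zvar$, so data processing cannot absorb the bias between $\Prob_\Zvar$ and $\Prob_\Tinit$.

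\textbf{What the paper uses, and the budget fix.} The correct tool is the paper's KL-transfer lemma. It is a square-root triangle inequality: $\sqrt{\KL(P\|Q)} \le \sqrt{\KL(P\|S)} + \sqrt{b\,\KL(S\|Q)}$ whenever $dP/dS \le b$. The proof starts from the same chain rule. It bounds $(f-1)u \le (b-1)\phi(u) + |f-1|\sqrt{2\phi(u)}$ with $\phi(t) = t+e^{-t}-1$, so that $\mathbb{E}_S\phi(u)\le \KL(S\|Q)$. It then applies Cauchy--Schwarz together with $\chi^2(P\|S)\le 2b\,\KL(P\|S)$, which is where $f\le b$ enters.

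This changes your budget. With $\KL(\Prob_\Tinit\|\Qprob)\le 3\varepsilon/4$, the lemma only certifies
\[
\KL(\Prob_\Zvar\|\Qprob)\le\big(\tfrac14+e^{\varepsilon/32}\tfrac{\sqrt3}{2}\big)^2\varepsilon,
\]
which exceeds $1.2\,\varepsilon$. You need $\KL(\Prob_\Tinit\|\Qprob)\le\varepsilon/2$.

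Your own iteration count already delivers this if you stop discarding the factor $\tfrac12$ in Step 2. Since $\hfun(\Tfinal)/\Tfinal\le \usedim\Pcar/\Tfinal\le\varepsilon/2$, you have $\Hinfo(\Tinit,\Tfinal)\le\tfrac{\usedim}{2}\log(1+\Pcar/\Tinit)+\tfrac{\varepsilon}{2}$. So the same $\Nscore$ makes the discretization term at most $\varepsilon/4$. Define $\Nscore$ through the explicit upper bound, which also secures $\Nscore\ge\log(\Tfinal/\Tinit)$. Then
\[
\sqrt{\KL(\Prob_\Zvar\|\Qprob)}\le\sqrt{\varepsilon/16}+e^{\varepsilon/32}\sqrt{\varepsilon/2}\le\big(\tfrac14+e^{1/32}/\sqrt2\big)\sqrt{\varepsilon}\le\sqrt{\varepsilon}.
\]
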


}
\noindent See~\Cref{SecProofPropPoincare} for the proof. \\

This bound should be compared with our own recent work~\cite{Wai26},
which uses the diffusion path to reduce to a sequence of strongly
log-concave (SLC) problems, and proved (cf. Thm. 1 in the
paper~\cite{Wai26}) the iteration complexity bound
\begin{subequations}
\begin{align}
\label{EqnWainwrightGeneral}  
  \Nwainwright(\varepsilon) \leq \big \{ 2 + \log(\kappa) \big \} \;
  \NSLC \big(\tfrac{\varepsilon}{2 + \log (\kappa)} \big),
\end{align}
where $\kappa \defn L/m$ is the condition number of the problem.  Here
$\NSLC(u)$ denotes the iteration complexity of any procedure for
sampling from an SLC distribution with constant conditioning.  If, for
example, we instantiate this general guarantee with $\NSLC$ chosen to
be the complexity of unadjusted Langevin (ULA), then we obtain a final
guarantee of
\begin{align}
\label{EqnWainwright}
\Nwainwright(\varepsilon) & \leq \frac{c}{\varepsilon} \usedim (2 +
\log (\kappa))^2
\end{align}
\end{subequations}
for a universal constant $c$.  To compare with the
guarantee~\eqref{EqnPoincare}, we note that any $m$-strongly
log-concave distribution has \Poincare constant $\Pcar \leq 1/m$, so
that $L \Pcar \leq \kappa = L/m$.  Thus, for the ULA sampler,
\Cref{PropPoincare} recovers the guarantee~\eqref{EqnWainwright} up to
poly-logarithmic factors in $(\usedim, 1/\varepsilon)$.  If the
general bound in~\Cref{ThmMaster} is instantiated with different SLC
samplers, then stronger guarantees can be obtained: for instance, with
the scaling $\sqrt{\usedim} \polylog(1/\varepsilon)$ when using
Metropolis-type samplers, or $d^{1/3}/\epsilon^{2/3}$ when using
randomized midpoint methods.  That being said, \Cref{PropPoincare}
applies to the far broader problem class of distributions with finite
\Poincare constant.

\subsubsection{Relation to a prior single-block bound}

As previously noted, Reeves and Pfister~\cite{ReePfi25} used I--MMSE
identities and information-theoretic methods to control the error of
KL discretization, using a single-block approach.  Using our
notation, their analysis (cf. Thm. 2 and Cor. 2 in their paper) with
optimized stepsize parameter leads to iteration complexity
proportional to the terminal mutual information $\Info(Z; X_\delta)$.
Note that the \dgclong $\Hinfo(\delta, T)$ is upper bounded by
$\Info(Z; X_\delta)$ up to a terminal correction;
cf. inequality~\eqref{EqnWeaker}(ii).  Thus, the single-block
guarantees developed in this section are related to their result, but
arise here as a very special case of the arbitrary-grid bound
in~\Cref{ThmMaster}. As opposed to a single-block approach, the
general form of~\Cref{ThmMaster} decomposes the full path into local
increments $\Hinfo(t_{\iter+1}, t_\iter)$. This interval-local form
becomes essential in the next section, where different portions of the
diffusion path are assigned different geometric resolutions, thereby
leading to a procedure with strictly tighter performance guarantees.


\section{Optimal bounds for tracking the \dgc path}
\label{SecTracking}

Thus far, we discussed a very simple consequence of~\Cref{ThmMaster},
involving a geometric multiplier $\rho$ over the full interval
$[\Tinit, \Tfinal]$.  However, the \dgc function is additive across
intervals~\eqref{EqnHinfoAdditive}, which enables us to pursue a more
refined analysis.  More precisely, we can obtain sharper guarantees by
partitioning the interval $[\Tinit, \Tfinal]$ into a sequence of
blocks, and then assigning a different geometric multiplier to each
block.  Doing so allows the algorithm to adapt to the structure of the
\dgc path.  In this section, we develop this strategy via a sequence
of refinements.  We start in~\Cref{SecTrackingMulti} by fixing a
collection of blocks, and showing how to optimize the choices of
geometric multipliers.  This optimization leads to a functional of the
partition~\eqref{EqnDGCPartComplex} that we refer to as the
\emph{\dgc-based partition complexity}. In~\Cref{SecDataMultiple}, we
exploit the tail-robust estimator from~\Cref{SecDataSingle} to show
how this optimization can be performed in a data-dependent manner.
Finally, in~\Cref{SecTrackingBlock}, we discuss how dynamic
programming can be used to select optimal block boundaries.  Overall,
these results allow us to achieve bounds based on an increasingly
refined tracking of the \dgc path.


\subsection{Bounds using optimal $\Btot$-block schemes}
\label{SecTrackingMulti}

We begin with the simplest multi-block extension.  Fix a $\Btot$-block
partition of the interval $[\Tinit, \Tfinal]$, say of the form $\Tinit
= \block_0 < \block_1 < \cdots < \block_\Btot = \Tfinal$, yielding the
collection of blocks
\begin{subequations}
\begin{align}
\label{EqnDefnPartition}  
\Partition & \defn \{ [\block_{\bind}, \block_{\bind+1}] \mid \bind =
0, \ldots, \Btot - 1 \}.
\end{align}
Associated with each block are the \emph{log-heat-time block length}
$\Sinfo_\bind \defn \log\left( \frac{\block_{\bind + 1}}{\block_\bind}
\right)$ and the \emph{\dgc block length} $\Hinfo_\bind \defn
\Hinfo(\block_\bind, \block_{\bind + 1})$.\label{EqnDefnSinfo}  Given a total iteration
budget $N$, suppose that we allocate $N_\bind$ iterations to each
$[\block_{\bind+1}, \block_\bind]$, subject to the constraint
$\sum_{\bind=0}^{\Btot-1} N_\bind = N$.  Applying~\Cref{ThmMaster}
with this block partition then ensures that
\begin{align}
\label{EqnMultiUpper}  
\KL( \Prob_\Tinit \| \Qprob_\Tinit) & \leq \sum_{\bind = 0}^{\Btot-1}
\rho_\bind \Hinfo_\bind + \KL( \Prob_\Tfinal \| \Qprob_\Tfinal) \qquad
\mbox{where $\rho_\bind \defn \exp \{\Sinfo_\bind/N_\bind \} - 1$.}
\end{align}
Combining the ingredients, we arrive at the following
integer-programming problem
\begin{align}
\label{EqnKblockDP}
  \min_{(N_0, \ldots, N_{\Btot-1})} \Big \{ \sum_{\bind = 0}^{\Btot-1}
  \big[ \exp \{\Sinfo_\bind/N_\bind \} - 1 \big] \Hinfo_\bind \Big \}
  \qquad \mbox{subject to the constraint $\sum
    \limits_{\bind=0}^{\Btot-1} N_\bind = N$.}
\end{align}
\end{subequations}
It is easy to see that this problem can be solved via dynamic
programming; the resulting solution allows us to achieve the optimal
guarantee afforded by~\Cref{ThmMaster} for the given $K$-block
partition.  Moreover, as the partition is refined by increasing
$\Btot$, we can argue that (under mild regularity conditions on
$\hfun'$), this optimal guarantee should approach the optimal KL
discretization error; see~\Cref{SecLocalSharpness} for the relevant
discussion.

\subsubsection{Guarantees for an explicit procedure}

While an explicit dynamic programming procedure is feasible, it is
helpful---both for theoretical understanding and for the data-driven
procedure developed in~\Cref{SecDataMultiple}---to describe a simple
and completely explicit allocation that yields an order-optimal
solution to the dynamic program~\eqref{EqnKblockDP}.  Moreover, this
result highlights the relevance of the \emph{\dgc-based partition
complexity} defined by
\begin{align}
  \label{EqnDGCPartComplex}
  \PartH & \defn \left( \sum_{\bind = 0}^{\Btot - 1}
  \sqrt{\Sinfo_\bind \Hblk_\bind} \right)^2.
\end{align}

\mygraybox{
\begin{theorem}[Optimal geometric schedules for $\Btot$-block partition]
\label{ThmTrackMulti}
Given the $\Btot$-block partition $\Partition $ and an iteration
budget $\Nscore \geq 2 \left\{ K + 2 \log(\Tfinal/\Tinit) \right\}$,
define the geometric multipliers
\begin{subequations}
\begin{align}
  \label{EqnFixedBudgetRho}
  \rho_\bind & \defn \min\left\{ 1, 4 \frac{\sqrt{\PartH}}{\Nscore}
  \sqrt{\frac{\Sinfo_\bind}{\Hblk_\bind}} \right\} \quad \mbox{for $\bind
    = 0, 1, \ldots, \Btot-1$.}
\end{align}
Then the $\Btot$-block \SIEulerRho procedure based on partition
$\Partition$ uses at most $\Nscore$ score evaluations, and yields
output $\Xhat_\Nscore \sim \Qprob_\Tinit$ such that
\begin{align}
\label{EqnFixedBudgetKL}
\KL(\Prob_\Tinit \| \Qprob_\Tinit) & \leq \frac{4 \PartH}{\Nscore} +
\KL(\Prob_\Tfinal \| \Qprob_\Tfinal).
\end{align}
Moreover, the optimal DP solution~\eqref{EqnKblockDP} based on
iteration budget $N$ satisfies the lower bound
\begin{align}
\label{EqnDGCLower}  
\sum_{\bind = 0}^{\Btot-1} \rho_\bind \Hinfo_\bind & \geq
\frac{\PartH}{N}.
\end{align}
\end{subequations}
\end{theorem}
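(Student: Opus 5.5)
The plan is to run, inside each block $[\block_\bind, \block_{\bind+1}]$, the decreasing geometric schedule~\eqref{EqnDecreasingGeometric} with multiplier $\rho_\bind$, clipped at $\block_\bind$. Concatenating these yields a single grid over $[\Tinit, \Tfinal]$ to which \Cref{ThmMaster} applies directly.

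\textbf{KL bound.} Every step in block $\bind$ satisfies $t_\iter/t_{\iter+1} - 1 \leq \rho_\bind$, and the last step may be shorter. So, exactly as in the proof of \Cref{CorNewSingle}, the additivity property~\eqref{EqnHinfoAdditive} applied within each block gives
\begin{align*}
\KL(\Prob_\Tinit \| \Qprob_\Tinit) \leq \sum_{\bind=0}^{\Btot-1} \rho_\bind \Hblk_\bind + \KL(\Prob_\Tfinal \| \Qprob_\Tfinal).
\end{align*}
Using the bound $\rho_\bind \leq 4 \frac{\sqrt{\PartH}}{\Nscore}\sqrt{\Sinfo_\bind/\Hblk_\bind}$, the sum is at most
\begin{align*}
\frac{4\sqrt{\PartH}}{\Nscore}\sum_\bind \sqrt{\Sinfo_\bind \Hblk_\bind} = \frac{4\PartH}{\Nscore},
\end{align*}
which is the bound~\eqref{EqnFixedBudgetKL}. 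If $\Hblk_\bind = 0$, the convention is $\rho_\bind = 1$, and that block contributes nothing to the sum.

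\textbf{Budget count.} I expect this step to be the main bookkeeping obstacle, because of the ceilings and the truncated final step in each block. Block $\bind$ uses $\Nscore_\bind = \lceil \Sinfo_\bind / \log(1+\rho_\bind) \rceil \leq 1 + \Sinfo_\bind/\log(1+\rho_\bind)$ steps. Since $\rho_\bind \in (0,1]$, concavity gives $\log(1+x) \geq x\log 2 \geq x/2$ on this range, so $\Nscore_\bind \leq 1 + 2\Sinfo_\bind/\rho_\bind$. I then split into two cases.
\begin{itemize}
\item If $\rho_\bind = 1$, then $\Nscore_\bind \leq 1 + 2\Sinfo_\bind$.
\item Otherwise, $2\Sinfo_\bind/\rho_\bind = \Nscore \sqrt{\Sinfo_\bind\Hblk_\bind}/(2\sqrt{\PartH})$.
\end{itemize}
Summing over blocks, and bounding the sum of the second-case terms by $\Nscore/2$ via the definition~\eqref{EqnDGCPartComplex}, gives
\begin{align*}
\sum_\bind \Nscore_\bind \leq \Btot + 2\log(\Tfinal/\Tinit) + \Nscore/2 \leq \Nscore,
\end{align*}
where the last inequality is the hypothesis $\Nscore \geq 2\{\Btot + 2\log(\Tfinal/\Tinit)\}$, which is more than what is needed. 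Any unused budget can simply be left unspent.

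\textbf{Lower bound~\eqref{EqnDGCLower}.} Take any feasible allocation $(\Nscore_0,\ldots,\Nscore_{\Btot-1})$ with $\sum_\bind \Nscore_\bind = \Nscore$, and set $\rho_\bind = e^{\Sinfo_\bind/\Nscore_\bind} - 1$. Use $e^x - 1 \geq x$, and then Cauchy--Schwarz in the form $\sum_\bind a_\bind^2/\Nscore_\bind \geq (\sum_\bind a_\bind)^2/\sum_\bind \Nscore_\bind$ with $a_\bind = \sqrt{\Sinfo_\bind\Hblk_\bind}$. This gives
\begin{align*}
\sum_\bind \rho_\bind\Hblk_\bind \geq \sum_\bind \frac{\Sinfo_\bind\Hblk_\bind}{\Nscore_\bind} \geq \frac{\PartH}{\Nscore}.
\end{align*}
The bound holds for every feasible allocation, so in particular for the optimal DP solution of~\eqref{EqnKblockDP}. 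Hence the explicit choice~\eqref{EqnFixedBudgetRho} is optimal up to the factor $4$.
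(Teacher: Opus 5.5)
Your proposal is correct and follows essentially the same route as the paper's proof. It gets the KL bound from $\rho_\bind \leq 4\frac{\sqrt{\PartH}}{\Nscore}\sqrt{\Sinfo_\bind/\Hblk_\bind}$ and the definition of $\PartH$, the budget count from $\log(1+\rho) \geq \rho/2$ with the split $\rho_\bind = 1$ versus $\rho_\bind < 1$, and the lower bound from $e^x - 1 \geq x$ plus Cauchy--Schwarz. Your tally $\Btot + 2\log(\Tfinal/\Tinit) + \Nscore/2$ is the right one, and the hypothesis $\Nscore \geq 2\{\Btot + 2\log(\Tfinal/\Tinit)\}$ is exactly what closes it, not more than needed.
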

}
\noindent See~\Cref{SecProofThmTrackMulti} for the proof of this
claim.

\paragraph{Comparison to the DP optimum:}
Ignoring the terminal cost $\KL(\Prob_\Tfinal \| \Qprob_\Tfinal)$,
note that the upper bound~\eqref{EqnFixedBudgetKL} and lower
bound~\eqref{EqnDGCLower} ensure that we have obtained a solution that
is within a factor of $4$ of the optimal value of the dynamic
programming formulation~\eqref{EqnKblockDP}.

\paragraph{Comparison to the single-block complexity:}
From the inequality~\eqref{EqnFixedBudgetKL}, we see that iteration
complexity scales linearly in the partition complexity $\PartH$ from
equation~\eqref{EqnDGCPartComplex}. It is worth comparing this with
the single-block guarantees from~\Cref{CorNewSingle}.  Observe that if
we use the partition $\Partition_1 = \{ [\Tinit, \Tfinal] \}$ with a
single block, we have
\begin{subequations}
\begin{align*}
\PartComp([\Tinit, \Tfinal]) & = \log (\Tfinal/\Tinit) \Hinfo(\Tinit,
\Tfinal),
\end{align*}
which matches the complexity appearing in the
bound~\eqref{EqnNewSingle} from~\Cref{CorNewSingle}.  Moreover,
observe that for any partition $\Partition$, we have
\begin{align}
\label{EqnCauchy}
\PartH =\left( \sum_{\bind = 0}^{\Btot - 1} \sqrt{\Sinfo_\bind
  \Hblk_\bind} \right)^2 & \stackrel{(i)}{\leq}\Big(
\sum_{\bind=0}^{\Btot-1} \Sinfo_\bind \Big) \; \Big(
\sum_{\bind=0}^{\Btot-1} \Hblk_\bind \Big) \; \stackrel{(ii)}{=} \;
\PartComp([\Tinit, \Tfinal]),
\end{align}
\end{subequations}
where step (i) uses the Cauchy--Schwarz inequality; and step (ii) uses
the relation $\sum_{\bind=0}^{\Btot-1} {\Sinfo_\bind} =
\log(\Tfinal/\Tinit)$, and $\sum_{\bind=0}^{\Btot-1} \Hblk_\bind =
\Hinfo(\Tinit, \Tfinal)$, where the latter property follows from the
additivity~\eqref{EqnHinfoAdditive} of the \dgc function $\Hinfo$.
Thus, we see that the multi-block complexity is never larger than the
single-block complexity, as should be expected.  More importantly,
this argument illustrates when the multi-block approach yields gains
over the single-block approach.  The Cauchy--Schwarz step is the only
source of looseness, and it holds with equality if and only if there
is some scalar $c > 0$ such that $\Sinfo_\bind = c \Hinfo_\bind$ for
each block.  This is a highly restrictive condition, and
in~\Cref{SecFine}, we explore the nature of this gap in more detail.
In particular, \Cref{SecSeparation} describes a very simple ensemble
of Gaussian mixtures for which the multi-block and single-block
complexities exhibit an exponential separation in terms of the log
radius $R$.


\subsubsection{Certified guarantees for multi-block schemes}
\label{SecDataMultiple}

\Cref{ThmTrackMulti} provides an attractive and near-optimal upper
bound, but observe that the stepsize choices~\eqref{EqnFixedBudgetRho}
depend on knowledge of the \dgc increments $\Hinfo_\bind$, as well as
the partition complexity $\PartComp(\Partition) = \big(
\sum_{\bind=0}^{\Btot-1} \sqrt{\Sinfo_\bind \Hinfo_\bind} \big)^2$.
The increments $\Sinfo_\bind$ are known quantities, so that it only
remains to bound $\Hinfo_\bind$.  In~\Cref{SecDataSingle}, we
described a data-based procedure to bound the \dgc increment
$\Hinfo(a,b)$ for any interval $[a,b]$.  Here we show how the
estimator from~\Cref{PropDataSingle}, when combined with the arguments
underlying the proof of~\Cref{ThmTrackMulti}, can be used to construct
a multi-block procedure with certified guarantees.

Suppose that, for each block $[\block_{\bind}, \block_{\bind+1}]$, we
apply the tail-robust estimator from~\Cref{PropDataSingle} with
failure probability $\eta/\Btot$.  It provides estimates $\Hhat_\bind
\defn \Hhat(\block_{\bind}, \block_{\bind+1})$ such that
\begin{subequations}
  \begin{align}
\label{EqnHhatBound}   
\underbrace{ \Hinfo(\block_{\bind}, \block_{\bind+1})}_{\equiv
  \Hinfo_\bind} & \leq \underbrace{\Hhat(\block_{\bind},
  \block_{\bind+1})}_{\equiv \Hhat_\bind} +
\underbrace{\NewHackErr}_{\equiv \rhat_k}.
\end{align}
  Our choice of failure probability $\eta/\Btot$, the union bound, and
  the guarantee from~\Cref{PropDataSingle} ensure that these bounds
  hold uniformly over all blocks with probability at least $1 - \eta$.
  Here the upper confidence bound is given by
\begin{align}
\label{EqnNewHackErr}
\NewHackErr & \defn \sqrt{ \frac{ 2 \Vhat_\bind \log(4 \Btot / \eta) }{\numobs}
} + \frac{8 M_p^2}{\block_{\bind}} \left( \frac{ 7 \log(4 \Btot /
  \eta) }{ 3(\numobs - 1) } \right)^{1 - 2/p},
\end{align}
where $\Vhat_k$ is the empirical variance estimate for the block.
Finally, we define the surrogate partition complexity
\begin{align}
  \label{EqnDefnPartCompHat}
\PartCompHat(\Partition) & \defn \Big( \sum_{\bind =0}^{\Btot-1}
\sqrt{\Sinfo_k} \sqrt{\Hhat_k + \rhat_k} \Big)^2.
\end{align}
\end{subequations}
With this set-up, we have the following \emph{certified analogue}
of~\Cref{ThmTrackMulti}.

\mygraybox{
  \begin{corollary}[Data-certified guarantees for multi-block scheme]
\label{CorCertifiedMulti}    
Given a failure probability $\eta \in (0, 1)$, a $\Btot$-block
partition and an iteration budget $\Nscore \geq 2 \left\{ K + 2
\log(\Tfinal/\Tinit) \right\}$, suppose that we run the \SIEulerRho
scheme with the geometric multipliers
\begin{subequations}
\begin{align}
    \label{EqnCertifiedMultiRho}
    \rhohat_\bind & \defn \min\left\{ 1, \frac{4
      \sqrt{\PartCompHat(\Partition)}}{\Nscore}
    \sqrt{\frac{\Sinfo_\bind}{\Hhat_\bind + \rhat_\bind}} \right\}
    \qquad \mbox{for $\bind = 0, 1, \ldots, \Btot - 1$.}
  \end{align}
Then the output $\Xhat_N \sim \Qprob_\Tinit$ satisfies the bound
\begin{align}
    \label{EqnCertifiedMulti}
    \KL(\Prob_\Tinit \| \Qprob_\Tinit) & \leq \frac{4
      \PartCompHat(\Partition)}{\Nscore} + \KL(\Prob_\Tfinal \|
    \Qprob_\Tfinal) \qquad \mbox{with probability at least $1 -
      \eta$.}
  \end{align}
\end{subequations}
\end{corollary}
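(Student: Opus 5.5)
Work on the event $\mathcal{A}$ where the bounds~\eqref{EqnHhatBound} hold simultaneously for all $\Btot$ blocks; by~\Cref{PropDataSingle} applied with failure probability $\eta/\Btot$ and the union bound, $\mathcal{A}$ has probability at least $1-\eta$. On $\mathcal{A}$, write $\Hinfo_\bind \leq \widetilde{\Hinfo}_\bind \defn \Hhat_\bind + \rhat_\bind$. The statement then follows by rerunning the proof of~\Cref{ThmTrackMulti} with the true increments $\Hinfo_\bind$ replaced by the surrogates $\widetilde{\Hinfo}_\bind$. These surrogates are data-computable, positive, and satisfy $\PartCompHat(\Partition) = \big(\sum_\bind \sqrt{\Sinfo_\bind \widetilde{\Hinfo}_\bind}\big)^2$. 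The multipliers $\rhohat_\bind$ in~\eqref{EqnCertifiedMultiRho} are exactly the multipliers~\eqref{EqnFixedBudgetRho} for this surrogate instance.

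The key steps, in order, are as follows. \emph{(a) Iteration count.} This part of the proof of~\Cref{ThmTrackMulti} uses only the formula for $\rho_\bind$ in terms of $(\Sinfo_\bind, \Hinfo_\bind, \PartH)$ together with $\sum_\bind \Sinfo_\bind = \log(\Tfinal/\Tinit)$; it never uses any property of the true $\Hinfo_\bind$. It therefore transfers verbatim. Block $\bind$ uses $N_\bind = \lceil \Sinfo_\bind/\log(1+\rhohat_\bind)\rceil$ steps. Bounding $1/\log(1+\rho) \leq 1/\rho + 1$ for $\rho\in(0,1]$ and treating separately the blocks where the minimum equals $1$ gives
\[
\sum_\bind N_\bind \leq \Btot + \log(\Tfinal/\Tinit) \cdot \mathcal{O}(1) + \frac{\Nscore}{4\sqrt{\PartCompHat(\Partition)}} \sum_\bind \sqrt{\Sinfo_\bind \widetilde{\Hinfo}_\bind}.
\]
The last term equals $\Nscore/4$, and the budget condition $\Nscore \geq 2\{\Btot + 2\log(\Tfinal/\Tinit)\}$ then absorbs the first two terms. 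This holds deterministically, whatever the data.

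\emph{(b) KL bound.} Apply~\Cref{ThmMaster} blockwise, using additivity~\eqref{EqnHinfoAdditive} and the fact that within block $\bind$ every ratio satisfies $t_\iter/t_{\iter+1}-1 \leq \rhohat_\bind$. This gives $\KL(\Prob_\Tinit\|\Qprob_\Tinit) \leq \sum_\bind \rhohat_\bind \Hinfo_\bind + \KL(\Prob_\Tfinal\|\Qprob_\Tfinal)$, which holds for any data-chosen multipliers because the scheme conditions on the data. On $\mathcal{A}$ we have $\rhohat_\bind \Hinfo_\bind \leq \rhohat_\bind \widetilde{\Hinfo}_\bind$. Using $\rhohat_\bind \leq 4\sqrt{\PartCompHat}\,\Nscore^{-1}\sqrt{\Sinfo_\bind/\widetilde{\Hinfo}_\bind}$ then yields
\[
\sum_\bind \rhohat_\bind \widetilde{\Hinfo}_\bind \leq \frac{4\sqrt{\PartCompHat}}{\Nscore}\sum_\bind \sqrt{\Sinfo_\bind\widetilde{\Hinfo}_\bind} = \frac{4\PartCompHat(\Partition)}{\Nscore},
\]
which is~\eqref{EqnCertifiedMulti}.

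The main subtlety, rather than a real obstacle, is independence. The estimates $\Hhat_\bind$ must be built from held-out samples of $\Zvar$, independent of the sampler's randomness, so that~\Cref{ThmMaster} can be applied conditionally on the realized multipliers. The guarantee is then a statement about the conditional law $\Qprob_\Tinit$ given the data, holding on $\mathcal{A}$. A second point to check is step (a): I should verify that the constants in the proof of~\Cref{ThmTrackMulti} depend only on the algebraic form of the multipliers and not on any relation between $\Hinfo_\bind$ and $\Sinfo_\bind$. If they do, the surrogate substitution is fully legitimate.
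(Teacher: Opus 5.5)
Your proposal is correct and is essentially the paper's proof. On the union-bound event where $\Hinfo_\bind \le \Hhat_\bind + \rhat_\bind$ for every block, you apply the blockwise bound \eqref{EqnMultiUpper}, replace $\Hinfo_\bind$ by the surrogate, and use the form of $\rhohat_\bind$ to obtain $4\PartCompHat(\Partition)/\Nscore$. The iteration count then follows verbatim from the argument for \Cref{ThmTrackMulti}, which never uses any property of the true increments. Your explicit treatment of conditioning on held-out data, and your use of $1/\log(1+\rho) \le 1 + 1/\rho$ in place of the paper's $\log(1+\rho) \ge \rho/2$, are harmless refinements of details the paper leaves implicit.
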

}
\noindent We provide the proof here, since it is elementary, and
illustrates a natural robustness of the geometric multiplier scheme to
perturbations.
\begin{proof}
By applying the upper bound~\eqref{EqnMultiUpper} with the geometric
multipliers $\{\rhohat_\bind \}_{\bind=0}^{\Btot-1}$ from
equation~\eqref{EqnCertifiedMultiRho}, we find that
\begin{align*}
\KL( \Prob_\Tinit \| \Qprob_\Tinit) \leq \sum_{\bind = 0}^{\Btot-1}
\rhohat_\bind \Hinfo_\bind + \KL( \Prob_\Tfinal \| \Qprob_\Tfinal) &
\stackrel{(i)}{\leq} \sum_{\bind = 0}^{\Btot-1} \rhohat_\bind
\big(\Hhat_\bind + \rhat_\bind \big) + \KL( \Prob_\Tfinal \|
\Qprob_\Tfinal),
\end{align*}
where inequality (i) holds with probability at least $1 - \eta$, using
each of the block bounds~\eqref{EqnHhatBound}.  From our
choice~\eqref{EqnCertifiedMultiRho} of geometric multipliers, we have
\begin{align*}
  \sum_{\bind = 0}^{\Btot-1} \rhohat_\bind \big(\Hhat_\bind +
  \rhat_\bind \big) \; \leq \; 4 \sum_{\bind = 0}^{\Btot-1} \left \{
  \frac{\sqrt{\PartCompHat(\Partition)}}{\Nscore}
  \sqrt{\frac{\Sinfo_\bind}{\Hhat_\bind + \rhat_\bind}} \right\} \;
  \big(\Hhat_\bind + \rhat_\bind \big) & = 4 \sum_{\bind =
    0}^{\Btot-1} \frac{\sqrt{\PartCompHat(\Partition)}}{\Nscore}
  \sqrt{\Sinfo_\bind (\Hhat_\bind + \rhat_\bind)} \\
  & = 4 \frac{\PartCompHat(\Partition)}{\Nscore}
\end{align*}
where the last step uses the definition~\eqref{EqnDefnPartCompHat} of
the surrogate complexity.

Finally, the same argument used to prove~\Cref{ThmTrackMulti} can be
used to show that the overall procedure uses at most $N$ score
evaluations, thereby meeting the budget.
\end{proof}


\subsubsection{Optimal choice of block boundaries}
\label{SecTrackingBlock}

Our analysis thus far has focused on optimal schemes for a fixed set
of $\Btot$ blocks. In this section, we formulate the problem of
optimizing the split points that define a (near)-optimal set of
$\Btot$ blocks via a simple dynamic program.

More precisely, for an integer $\Jtot > \Btot$ that defines the
resolution, suppose that we fix a fine deterministic grid of
\emph{candidate heat times}
$\tau_0 = \Tinit < \tau_1 < \cdots < \tau_\Jtot = \Tfinal$.
Our goal is to select indices $\{ i_\bind \}_{\bind=1}^{\Btot-1}$
from the set $\{ 1, \ldots, \Jtot - 1 \}$, and use the corresponding
heat times $t_\bind = \tau_{i_\bind}$, together with the endpoints
$t_0 = \Tinit$ and $t_\Btot = \Tfinal$, to define a $\Btot$-block
partition. We seek to minimize the square root of the partition
complexity
\begin{align*}
  \sqrt{\PartComp(\Partition)} & = \sum_{\bind=0}^{\Btot-1}
  \sqrt{\Sinfo_\bind \Hblk_\bind}, \qquad \mbox{where $\Sinfo_\bind
    \defn \log(t_{\bind+1}/t_\bind)$, and $\Hblk_\bind \defn
    \Hinfo(t_\bind, t_{\bind+1})$.}
\end{align*}

Given the naturally sequential structure, this optimization problem
can be solved by dynamic programming, as we now formalize. For
candidate intervals $0 \leq i < j \leq \Jtot$, define the edge cost
\begin{subequations}
\begin{align}
\label{EqnKBlockEdgeCost}
e(i, j) & \defn \sqrt{\Sinfo(\tau_i, \tau_j) \Hinfo(\tau_i, \tau_j)},
\qquad \mbox{where $\Sinfo(a, b) \defn \log(b/a)$.}
\end{align}
With this notation, our ultimate goal is to compute
\begin{align}
  \label{EqnFinalDP}
  \ValFun_\Btot(\Jtot) & \defn \min_{0 = j_0 < j_1 < \cdots < j_\Btot
    = \Jtot} \sum_{\bind=0}^{\Btot-1} e(j_\bind, j_{\bind+1}).
\end{align}
\end{subequations}
This quantity is the minimum cost of a $\Btot$-block partition of the
interval $[\tau_0, \tau_\Jtot] = [\Tinit, \Tfinal]$. The dynamic
program involves the intermediate quantities $\ValFun_\bind(j)$,
defined for each $\bind \in \{ 0, \ldots, \Btot \}$ and $j \in \{ 0,
\ldots, \Jtot \}$ as the minimum cost of a $\bind$-block partition of
the subinterval $[\tau_0, \tau_j]$, with value $+\infty$ when no such
partition exists.

In order to compute the optimal value~\eqref{EqnFinalDP}, we perform
the following forward recursion over stages \mbox{ $\bind = 0, \ldots,
  \Btot - 1$:}
\mygraybox{\paragraph{Dynamic program for selecting an optimal
    $\Btot$-block partition.}
\begin{subequations}
\begin{enumerate}
\item[(1)] At stage $\bind = 0$, initialize the value function
$\ValFun_0$ as
\begin{align}
  \label{EqnKBlockDPInitialization}
  \ValFun_0(j)
  & =
  \begin{cases}
    0, & \mbox{if $j = 0$,} \\
    +\infty, & \mbox{if $j \in \{ 1, \ldots, \Jtot \}$.}
  \end{cases}
\end{align}
\item[(2)] For stages $\bind = 0, \ldots, \Btot - 1$, perform the
updates
\begin{align}
  \label{EqnKBlockDPRecursion}
  \ValFun_{\bind+1}(j)
  & = \min_{i \in \{ \bind, \ldots, j - 1 \}}
  \left\{
    \ValFun_\bind(i) + e(i, j)
  \right\}
  \qquad
  \mbox{for all $j \in \{ \bind + 1, \ldots, \Jtot \}$,}
\end{align}
and set $\ValFun_{\bind+1}(j) = +\infty$ for
$j \in \{ 0, \ldots, \bind \}$.
\end{enumerate}
\end{subequations}
}
\noindent The standard DP reasoning can be used to show that upon
termination, the value $\ValFun_{\Btot}(\Jtot)$ satisfies the required
identity~\eqref{EqnFinalDP}. Direct evaluation of the dynamic program
costs $O(\Btot \Jtot^2)$ arithmetic operations, where $\Jtot$ controls
the discretization accuracy (relative to the idealized continuous time
DP).

Finally, we observe that the edge costs~\eqref{EqnKBlockEdgeCost}
depend on the \dgc increments $\Hinfo(\tau_i, \tau_j)$.  As in the
previous section, we can obtain confidence bounds for increments of
this type using the estimator from~\Cref{PropDataSingle}, thereby
obtaining a data-dependent version of the dynamic program.


\subsection{Log-time \dgc density and the partition-complexity sandwich}
\label{SecFine}

This section is devoted to a deeper exploration of the gaps between
the multi-block guarantees from~\Cref{ThmTrackMulti} and the
single-block guarantee from~\Cref{CorNewSingle} over a given interval
$[\Tinit, \Tfinal]$.  In particular, by considering the fine partition
limit, we can understand these gaps in an integral form, which
involves a reparameterization into log heat-time.

Recall the definition~\eqref{EqnDefnOrigQdens} of the log-time \dgc
  density: here we make direct use of the equivalent form
\begin{subequations}
  \begin{align}
    \label{EqnDefnDGCDensity}
  \qdens(r) & \defn \hfun' \big( \Tinit e^r \big).
  \end{align}
See~\Cref{FigQplots} in~\Cref{SecLogTimeDGC} for some illustrative
plots.  By construction, we have the representation $\Hinfo( \Tinit
e^u, \Tinit e^v) = (1/2) \int_u^v \qdens(r) dr$.  As a particular
case, for the choices $u = 0$ and $v = \log(\Tfinal/\Tinit)$, we have
the equivalence
\begin{align*}
 \Hinfo(\Tinit, \Tfinal) & = (1/2) \int_0^{\log(\Tfinal/\Tinit)}
 \qdens(r) dr.
\end{align*}
\end{subequations}
This expression shows that $\qdens$ corresponds to the amount of \dgc mass
that is accumulated per unit of log-heat-time, hence the name log-time
\dgc density.

\subsubsection{The partition-complexity sandwich}
\label{SecSandwich}

In this section, we show that the partition-complexity $\PartH$ can be
sandwiched between two integrals based on the density $\qdens$.  The
upper bound corresponds to the single-block
complexity~\eqref{EqnNewSingle}, whereas the lower bound is defined by
the \emph{log-time \dgc spread}
\begin{align}
  \label{EqnDGCSpread}
\PartHfine(\Tinit, \Tfinal) & \defn \frac{1}{2} \left(
\int_0^{\log(\Tfinal/\Tinit)} \sqrt{\qdens(r)} \, dr \right)^2.
\end{align}
Recall that the single-block complexity takes the form
$\PartComp([\Tinit, \Tfinal]) = \Hinfo(\Tinit, \Tfinal) \;
\log(\Tfinal/\Tinit)$.  The following result shows that the partition
complexity is always sandwiched between the two extremes.

\mygraybox{
\begin{lemma}[Partition-complexity sandwich]
\label{LemFinePartSandwich}
Suppose that $\qdens$ is integrable.  Then for any partition
$\Partition$, the partition complexity $\PartH$ is sandwiched as
\begin{align}
  \label{EqnFinePartSandwich}
  \underbrace{\PartHfine(\Tinit, \Tfinal)}_{\mbox{\small \dgc spread}}
\; \leq \; \PartH \; \stackrel{(ii)}{\leq} \; \underbrace{\PartComp([\Tinit,
    \Tfinal])}_{\mbox{Single block}}
\end{align}
\end{lemma}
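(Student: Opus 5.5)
The upper bound (ii) is exactly the Cauchy--Schwarz computation~\eqref{EqnCauchy} already carried out after \Cref{ThmTrackMulti}. There, $\sum_\bind \Sinfo_\bind = \log(\Tfinal/\Tinit)$, and the additivity property~\eqref{EqnHinfoAdditive} gives $\sum_\bind \Hblk_\bind = \Hinfo(\Tinit,\Tfinal)$. So I will simply cite that argument, and the real work is the lower bound.

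For the lower bound, I will pass to log heat-time. Write $r_\bind \defn \log(\block_\bind/\Tinit)$, so that $0 = r_0 < r_1 < \cdots < r_\Btot = \log(\Tfinal/\Tinit)$ and $\Sinfo_\bind = r_{\bind+1} - r_\bind$. By the representation $\Hinfo(\Tinit e^u, \Tinit e^v) = \tfrac12\int_u^v \qdens(r)\,dr$ stated after~\eqref{EqnDefnDGCDensity}, each block satisfies $\Hblk_\bind = \tfrac12 \int_{r_\bind}^{r_{\bind+1}} \qdens(r)\,dr$. Here $\qdens \geq 0$, since $\hfun' \geq 0$ as noted in the proof of \Cref{LemDinfoSandwich}, so square roots make sense.

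The key step is Cauchy--Schwarz within each block:
\begin{align*}
\int_{r_\bind}^{r_{\bind+1}} \sqrt{\qdens(r)}\,dr \;\leq\; \Big(\int_{r_\bind}^{r_{\bind+1}} 1\,dr\Big)^{1/2} \Big(\int_{r_\bind}^{r_{\bind+1}} \qdens(r)\,dr\Big)^{1/2} \;=\; \sqrt{\Sinfo_\bind}\,\sqrt{2\Hblk_\bind}.
\end{align*}
Integrability of $\qdens$ guarantees that $\sqrt{\qdens}$ is integrable on the finite interval, so both sides are finite. Summing over $\bind$ gives $\int_0^{\log(\Tfinal/\Tinit)} \sqrt{\qdens}\,dr \leq \sqrt{2}\sum_\bind \sqrt{\Sinfo_\bind \Hblk_\bind} = \sqrt{2\,\PartH}$. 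Squaring and dividing by two yields $\PartHfine(\Tinit,\Tfinal) \leq \PartH$, which is the claimed lower bound.

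No step is genuinely hard. The only point needing care is the normalization: the factor $\tfrac12$ in the definition~\eqref{EqnDGCSpread} of $\PartHfine$ must match the factor $\tfrac12$ in $\Hblk_\bind = \tfrac12\int \qdens$. I will track that constant explicitly, along with the sign of $\qdens$. As a remark, equality in the lower bound holds when $\qdens$ is constant on each block. This shows that fine partitions approach $\PartHfine$, although the lemma only requires the inequality.
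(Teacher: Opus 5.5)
Your proof is correct and is essentially the paper's. Part (ii) is the Cauchy--Schwarz bound~\eqref{EqnCauchy}, and part (i) is the per-block log-time inequality $\int_{r_\bind}^{r_{\bind+1}} \sqrt{\qdens(r)}\,dr \le \sqrt{2\Sinfo_\bind \Hblk_\bind}$ followed by summing, squaring and halving; the paper gets this inequality from Jensen for the concave square root, while you get the identical inequality from Cauchy--Schwarz against the constant $1$. One small caveat: your closing remark that fine partitions approach the spread does not follow from the equality case alone, since for general $\qdens$ the paper needs an $L^1$ approximation of $\qdens$ by block averages to obtain~\eqref{EqnFineLimit}, but that claim is not part of the lemma.
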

}
\paragraph{Achieving the \dgc spread:}  Our 
proof below also establishes that
\begin{align}
  \label{EqnFineLimit}
\PartHfine(\Tinit, \Tfinal) & = \inf_{\Partition} \PartH
\end{align}
where the infimum is taken over all partitions.  Thus, by choosing
successively finer partitions, we can make $\PartH$ arbitrarily close
to the \dgc spread~\eqref{EqnDGCSpread}. \\

\noindent We give the proof of~\Cref{LemFinePartSandwich} here, since
it is simple and illustrates exactly how the $\Btot$-block procedure
leads to a $\Btot$-piece upper bound on the integral that defines
$\PartHfine(\Tinit, \Tfinal)$.

\begin{proof}
\noindent The bound (ii) follows via the Cauchy--Schwarz inequality;
see equation~\eqref{EqnCauchy} for our earlier argument.  Accordingly,
we now prove the bound (i).  Due to the change of variables involved
in defining $\qdens$, we define partitions of the interval $[0,
  \log(\Tfinal/\Tinit)]$ in log-time as $0 = r_0 < r_1 < \cdots < r_K
= \log(\Tfinal/\Tinit)$.  In terms of this parameterization, the pair
$(\Sinfo_k, \Hinfo_k)$ from equation~\eqref{EqnDefnSinfo} then has
the equivalent (and very convenient) representation
\begin{align*}
\Sinfo_k = r_{k + 1} - r_k \quad \mbox{and} \quad \Hblk_k =
\frac{1}{2} \int_{r_k}^{r_{k + 1}} \qdens(r) \, dr.
\end{align*}
For each block, the concavity of the square-root function
and Jensen's inequality ensure that
\begin{align}
  \label{EqnBlockUpper}
\int_{r_k}^{r_{k + 1}} \sqrt{\qdens(r)} \, dr & \leq \Sinfo_k
\sqrt{\frac{1}{\Sinfo_k} \int_{r_k}^{r_{k + 1}} \qdens(r) \, dr }
= \sqrt{2 \Sinfo_k \Hblk_k}.
\end{align}
Summing over the blocks yields the upper bound
\begin{align*}
  \int_0^{\log(\Tfinal/\Tinit)} \sqrt{\qdens(r)} \, dr
  & \leq \sum_{k=0}^{\Btot - 1} \sqrt{2 \Sinfo_k \Hblk_k}
  = \sqrt{2} \sum_{k=0}^{\Btot - 1} \sqrt{\Sinfo_k \Hblk_k},
\end{align*}
and squaring both sides, followed by division by $2$, yields the bound
(i) from equation~\eqref{EqnFinePartSandwich}.

To prove the fine-block limit~\eqref{EqnFineLimit}, let
$\{\Partition_n\}$ be a sequence of uniform refinements of the
log-time interval, let $\qdens_n$ be the piecewise-constant function
equal on each block to the average of $\qdens$ over that block, so
that $\|\qdens_n - \qdens\|_{L^1} \to 0$.  We have the upper bound
\begin{align*}
\left| \int_0^{\log(\Tfinal/\Tinit)} \sqrt{\qdens_n(r)} \, dr -
\int_0^{\log(\Tfinal/\Tinit)} \sqrt{\qdens(r)} dr \right | & \leq
\int_0^{\log(\Tfinal/\Tinit)} \left| \sqrt{\qdens_n(r)} -
\sqrt{\qdens(r)} \right| \, dr \\
&\leq \sqrt{ \log(\Tfinal/\Tinit) \|\qdens_n - \qdens\|_{L^1}} \to 0,
\end{align*}
where we used the inequality $|\sqrt{x} - \sqrt{y}|^2 \leq |x - y|$
combined with the Cauchy--Schwarz inequality.  Noting that $\sqrt{2
  \PartComp(\Partition_n)} = \int_0^{\log(\Tfinal/\Tinit)}
\sqrt{\qdens_n(r)} \, dr$ by the definition of partition complexity, it
follows that $\PartComp(\Partition_n)$ converges to the \dgc spread
$\PartHfine(\Tinit, \Tfinal)$, so the infimum over all partitions
equals the lower bound already established.
\end{proof}

\paragraph{Meaning of the \dgc spread:}
Using the shorthand notation $L \defn \log(\Tfinal/\Tinit)$, we have
the two expressions
\begin{align*}
  \PartComp([\Tinit, \Tfinal]) & = \frac{L}{2} \int_0^{L} \qdens(r) \,
  dr, \qquad \mbox{and} \qquad \PartHfine(\Tinit, \Tfinal) =
  \frac{1}{2} \left( \int_0^L \sqrt{\qdens(r)} \, dr \right)^2.
\end{align*}
Consequently, in the fine partition limit, the gains of multi-block
approaches are captured by the spread ratio
\begin{align}
\label{EqnRatio}  
\frac{L \int_0^{L} \qdens(r) \, dr}{ \left( \int_0^L \sqrt{\qdens(r)}
  \, dr \right)^2 } & \geq 1 \qquad \mbox{where $L \defn
  \log(\Tfinal/\Tinit)$.}
\end{align}
This ratio is equal to one if and only if $\qdens$ is constant almost
everywhere on $[0, L]$.  The slack in this inequality---and hence the
gains from the multi-block approach---become large when $\qdens$
spreads its mass in a highly non-uniform way across the interval.  It
is for this reason that we refer to it as the \dgc spread.

\subsubsection{Multi-block separation for Gaussian mixtures}
\label{SecSeparation}

The spread ratio~\eqref{EqnRatio} captures the gains achievable by a
multi-block approach, and it is natural to wonder how large it can be
made.  To give a sense of the gains that can be achieved, let us
consider a very simple example. In particular, suppose that a scalar
$\Zvar$ takes values on the two points $\{-R, R \}$ with equal probability, and
consider the forward heat path $X_t = \Zvar + B_t$ where $\{B_t \}_{t
  \geq 0}$ is a Brownian motion.  Suppose that our goal is to sample
from the $2$-component Gaussian mixture distribution $X_\Tinit = \Zvar
+ B_\Tinit$.  We fix the terminal time $\Tfinal = 2 R^2$, at which
$\Prob_\Tfinal$ is strongly log-concave with condition number at most
$2$.  The following result characterizes the single-block, $2$-block,
and \dgc spread complexities up to universal constants.  The bounds
all involve the ratio $R^2/\delta$, which is a natural complexity
measure, since it corresponds to the variance-normalized distance
between the two Gaussian components.

\mygraybox{
  \begin{lemma}[Multi-block gains for Gaussian mixture model]
    \label{LemTwoPointSep}
    For all $R^2/\Tinit$ sufficiently large, we have: \\
    \begin{center}
      \begin{tabular}{lcl}
        \bf{\mbox{Single block:}} & \hspace{0.2in} &
        $\PartComp([\Tinit, 2 R^2])  = \Theta(\log (R^2/\delta))$ \\[0.5em]
        \bf{\mbox{Two block:}} && $\PartComp(\Partition_2)  = \Theta(\log\log (R^2/\delta))$ \\[0.5em]
        \bf{\mbox{\dgc spread:}} && $\PartHfine(\Tinit, 2 R^2) =
        \Theta(1)$.
      \end{tabular}
    \end{center}
    Here the $2$-block result is achieved by the partition
    \begin{align}
      \label{Eqn2Partition}
      \Partition_2 \defn \{ [\delta, \tau_R], [\tau_R, 2 R^2] \} \qquad
      \mbox{where $\tau_R \defn \frac{R^2}{\log(R^2/\delta)}$.}
    \end{align}
  \end{lemma}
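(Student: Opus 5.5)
The plan is to reduce everything to the scalar binary-input MMSE curve. For $\Zvar \in \{-R, R\}$ uniform, write the signal-to-noise ratio as $s = R^2/t$. Then $\hfun(t) = R^2\, m(R^2/t)$, where $m(s) \defn 1 - \Exs \tanh(s + \sqrt{s}\, W)$ with $W \sim \Normal(0,1)$ is the normalized binary MMSE. Hence $\hfun'(t) = s^2 |m'(s)|$, and the log-time \dgc density~\eqref{EqnDefnDGCDensity} is $\qdens(r) = s^2 |m'(s)|$ evaluated at $s = (R^2/\Tinit)e^{-r}$. As $r$ ranges over $[0, \log(2R^2/\Tinit)]$, the variable $s$ sweeps from $R^2/\Tinit$ down to $1/2$. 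I will use two standard facts about $m$. First, $m$ is smooth and strictly decreasing with $m(0)=1$, so $|m'| \asymp 1$ on $[1/2, 4]$. Second, $m(s) \le C e^{-s/8}$ for $s \ge 1$, which follows from a Gaussian tail bound on the sign error. The picture is that $\qdens$, viewed as a function of $\log s$, is a single bump of width $O(1)$ near $s \asymp 1$ with an exponentially small tail for large $s$. All three claims should follow from this shape.

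\textbf{Upper bounds on \dgc mass.} To avoid controlling $m'$ pointwise in the tail, I will bound block \dgc values through the dyadic sandwich~\eqref{EqnNewSandwich} together with \Cref{LemDinfoSandwich}. On a dyadic piece $[v, 2v]$ we have $2\Hinfo(v,2v) \le \Dinfo(v,2v)/v \le \hfun(2v)/v = 2 s' m(s')$ with $s' = R^2/(2v)$. Summing over dyadic pieces with $s' \ge \sigma$ gives $\Hinfo \lesssim \sum_{j} 2^j\sigma\, e^{-2^j\sigma/8} \lesssim \sigma e^{-\sigma/8}$ for $\sigma \ge 1$. The near-$s\asymp1$ region contributes $O(1)$ because $\hfun(2R^2)/R^2 \le 1$. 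This gives $\Hinfo(\Tinit, 2R^2) = O(1)$, and hence the single-block value $O(\log(R^2/\Tinit))$. For the first block of $\Partition_2$, the SNR satisfies $s \ge \ell \defn \log(R^2/\Tinit)$, so $\Hinfo(\Tinit,\tau_R) \lesssim \ell e^{-\ell/8} = \ell (R^2/\Tinit)^{-1/8}$. Multiplying by $\Sinfo_0 \le \ell$ gives $o(1)$. The second block has $\Sinfo_1 = \log(2\ell) \asymp \log\log(R^2/\Tinit)$ and $\Hinfo_1 = O(1)$, so $\PartComp(\Partition_2) = O(\log\log(R^2/\Tinit))$. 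For the spread, I will apply~\eqref{EqnBlockUpper} to the dyadic log-time blocks. This gives $\int \sqrt{\qdens} \le \sum_j \sqrt{2 \Sinfo_j \Hinfo_j}$ with $\Sinfo_j = \log 2$ and $\Hinfo_j \lesssim 2^j e^{-2^j/8}$ in the tail. The series is summable, so $\PartHfine = O(1)$.

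\textbf{Lower bounds.} The key input is that the bump near $s \asymp 1$ lies inside the second block $[\tau_R, 2R^2]$ and carries constant mass. Applying the lower half of~\eqref{EqnDinfoSandwich} on $[R^2/4, R^2]$ gives $2\Hinfo \ge (\hfun(R^2)-\hfun(R^2/4))/R^2 = m(1) - m(4) > 0$. This is an absolute constant, so $\Hinfo(\Tinit,2R^2) \ge c$, and the single-block lower bound follows. For the two-block claim, $\PartComp(\Partition_2) \ge \Sinfo_1 \Hinfo_1 \ge c\log(2\ell)$. For the spread, continuity and positivity of $s^2|m'(s)|$ on $[1,4]$ give $\qdens \ge c$ on a log-time interval of length $\log 4$. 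Hence $\int\sqrt{\qdens} \ge c'$.

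\textbf{Main obstacle.} The step I expect to be delicate is the tail estimate $m(s) \le C e^{-cs}$ for the binary channel. Once it holds, every upper bound reduces to bookkeeping through the sandwich relation and never requires controlling $m'$ in the tail. I would prove it by comparing with the sign estimator: the MMSE is at most the mean-squared error of $R\,\mathrm{sign}(X_t)$, which equals $4R^2\,\Prob(W > \sqrt{s})$. This gives $m(s) \le 4 e^{-s/2}$, which is even stronger than needed. A second point needing care is that positivity of $|m'|$ on $[1,4]$ should be justified via the I--MMSE relation or strict monotonicity of $m$ rather than only numerically.
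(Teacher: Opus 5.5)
Your proposal is correct, but it reaches the three $\Theta$-claims by a genuinely different route from the paper's proof.

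\textbf{The paper's route.} The paper works in noise-variance coordinates and controls the density pointwise. It invokes the Gaussian-channel MMSE derivative identity to get $0 \le m'(s) \le m(s)/s^2$. It combines this with the sign-estimator/Mills-ratio bound on $m$ to obtain super-exponential pointwise decay of $\sqrt{\psi}$ at small noise. It then concludes from integrability, strict positivity of $\psi$ (again from the derivative identity), and monotone convergence. This shows that $\Hinfo(1, 2R^2)$ and the spread converge to finite positive limits. It also gives the sharper first-block estimate $\Hinfo(1, \tau_R) \lesssim \sqrt{\log R}/R$.

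\textbf{Your route.} You never control $m'$ in the tail. You use only the tail of $m$ itself, fed through the paper's sandwich $2\Hinfo(v, 2v) \le \Dinfo(v, 2v)/v \le \hfun(2v)/v$ on dyadic pieces. You bound the spread by the complexity of an explicit dyadic log-time partition via the Jensen step~\eqref{EqnBlockUpper}. Your lower bounds come from the lower half of~\eqref{EqnDinfoSandwich}, with the explicit constant $m(1) - m(4)$.

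\textbf{What each buys.} Your argument is more elementary and gives explicit constants. It also adapts directly to other sources whose MMSE decays quickly along dyadic SNR scales. The cost is a cruder first-block rate, which is still ample. As a side remark, the bound $\Hinfo(\Tinit, 2R^2) = O(1)$ also follows in one line from \Cref{PropGoodRateDistor} with $\Zhat = \Zvar$, giving $\log 2 + 1/2$.

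\textbf{Two small repairs.}
\begin{enumerate}
\item On $[\Tinit, \tau_R]$ the dyadic right-endpoint SNRs are only $\ge \ell/2$, not $\ge \ell$. This is harmless.
\item As you suspected, a smooth strictly decreasing $m$ need not have $|m'| > 0$ pointwise. So your first ``standard fact'' does not follow from the reasons you give. You can either cite $m'(s) = -\mathbb{E}[\operatorname{sech}^4(s + \sqrt{s}\, W)]$, which is the content of the paper's positivity step, or avoid pointwise positivity entirely via $\int_I \sqrt{\qdens} \ge \big(\int_I \qdens\big)/\sqrt{\sup_I \qdens}$. Here $I$ is the log-time window with SNR in $[1,4]$. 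On it, $\int_I \qdens = 2\Hinfo(R^2/4, R^2) \ge m(1) - m(4)$ by your own sandwich bound, and $\sup_I \qdens < \infty$ by smoothness.
\end{enumerate}
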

}
\noindent We prove this result in~\Cref{SecProofLemTwoPointSep}.  Note
that the single-block scaling and the \dgc spread scaling imply that
the spread ratio~\eqref{EqnRatio} grows as $\log (R^2/\Tinit)$.

Moreover, as the proof makes clear, it is possible to establish a more
general result for arbitrary $K$-block partitions.  For any $K = 2, 3,
\ldots$, one can construct a partition $\Partition_K$ such that
$\PartComp(\Partition_K) = \Theta \big( \log^{\otimes K}(R^2/\delta)
\big)$, where $\log^{\otimes K}$ denotes the composition of the
logarithm $K$ times.  As an example, for $K = 3$, we have
$\PartComp(\Partition_3) = \Theta(\log \log \log (R^2/\Tinit))$.  Note
that for this example, a \emph{very small} block number $\Btot$
suffices to render the dependence on $R^2/\delta$ essentially
negligible.

\subsubsection{Accuracy of uniform $\Btot$-block approximations}

In the previous argument, we made use of a carefully chosen
partition~\eqref{Eqn2Partition} with $\Btot = 2$ blocks; in practice,
finding such a partition would require running the dynamic program
described in~\Cref{SecTrackingBlock}.  As opposed to this type of
adaptively chosen partition, it is also interesting to study the
accuracy of the simplest type of $\Btot$-block approximation---namely,
one that makes use of blocks of constant size over the interval $[0,
  \log(\Tfinal/\Tinit)]$.  In terms of the shorthand $\Len \defn
\log(\Tfinal/\Tinit)$, this uniform choice leads to a partition $\UniK$
consisting of $\Btot$ blocks, each of length $\Len/\Btot$.
\Cref{FigKblock} gives plots of the $\UniK$-partition approximation
for $\Btot \in \{1, 2, 4 \}$ for two different underlying densities
$\qdens$: a $2$-component Gaussian mixture with relatively small
component variances in the top row; and the hierarchical mixture model
(see~\Cref{FigHier}) in the bottom row.
 \begin{figure}[h]
  \begin{center}
    \begin{tabular}{ccc}
      \widgraph{0.3\textwidth}{\figdir/fig_two_gaussian_k_01} &
      \widgraph{0.3\textwidth}{\figdir/fig_two_gaussian_k_02} &
      \widgraph{0.3\textwidth}{\figdir/fig_two_gaussian_k_04} \\
      \widgraph{0.3\textwidth}{\figdir/fig_hier_k_01} &
      \widgraph{0.3\textwidth}{\figdir/fig_hier_k_02} &
      \widgraph{0.3\textwidth}{\figdir/fig_hier_k_04}
    \end{tabular}    
    \caption{Plots of approximations of $\int_0^L \sqrt{\qdens(r)} dr$
      based on the uniform $K$-block partitions $\UniK$ for $K \in
      \{1, 2, 4 \}$.  The top row corresponds to a two-component
      Gaussian mixture with small variances, whereas the bottom row
      corresponds to a hierarchical mixture model.}
        \label{FigKblock}
  \end{center}
\end{figure}

 Let us now quantify how rapidly the block approximations based on
 $\UniK$ approach the fine-partition limit under some simple global
 conditions.  Suppose that $\fdens = \sqrt{\qdens}$ is continuous on
 $[0, \Len]$, and define its global modulus of continuity
\begin{align}
\label{EqnBlockAccuracyModulus}
  \omega_\fdens(h) & \defn \sup_{\substack{r, s \in [0, \Len]\\ |r -
      s| \leq h}} |\fdens(r) - \fdens(s)|.
\end{align}
As a particular case, when $\fdens$ is Lipschitz, then we have
$\omega_\fdens(h) \leq \Lip(\fdens) h$.
\mygraybox{
\begin{lemma}[Accuracy of $\Btot$-block approximations]
\label{LemBlockAccuracy}
For the uniform $\Btot$-block partition $\UniK$, we have
\begin{subequations}
\begin{align}
\label{EqnBlockAccuracyEqual}
\sqrt{2 \PartComp(\UniK)} & \leq \sqrt{2 \PartHfine(\Tinit, \Tfinal)}
+ \frac{\Len}{2} \omega_\fdens\left( \frac{\Len}{\Btot} \right).
\end{align}
If, in addition, the square-root density is lower bounded as
$\fdens(r) \geq m > 0$ on $[0, \Len]$, then
\begin{align}
\label{EqnBlockAccuracyIntegrated}
\sqrt{2 \PartComp(\UniK)} & \leq \sqrt{2 \PartHfine(\Tinit, \Tfinal)}
+ \frac{\Len}{2 m h^2} \int_0^h (h - u) \omega_\fdens(u)^2 \, du
\qquad \mbox{where $h \defn \frac{\Len}{\Btot}$.}
\end{align}
\end{subequations}
\end{lemma}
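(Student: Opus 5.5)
The plan is to compare, block by block, the quantity $\sqrt{2\PartComp(\UniK)}$ with $\int_0^\Len \fdens(r)\,dr = \sqrt{2\PartHfine(\Tinit,\Tfinal)}$, reducing both claims to bounds on the variance of $\fdens$ within each block. Write $h = \Len/\Btot$ and let $I_k = [r_k, r_{k+1}]$ be the $k$-th block, so that $\Sinfo_k = h$. As in the proof of \Cref{LemFinePartSandwich}, we have $\Hblk_k = \frac12 \int_{I_k} \qdens = \frac12 \int_{I_k} \fdens^2$. Let $\overline{\fdens}_k$ and $\overline{\fdens^2}_k$ denote the averages of $\fdens$ and $\fdens^2$ over $I_k$. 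Then
\begin{align*}
\sqrt{2\PartComp(\UniK)} = \sum_{k=0}^{\Btot-1} \sqrt{2\Sinfo_k\Hblk_k} = \sum_{k} h \sqrt{\overline{\fdens^2}_k}, \qquad \int_0^\Len \fdens = \sum_k h\, \overline{\fdens}_k .
\end{align*}
It therefore suffices to bound each gap $\sqrt{\overline{\fdens^2}_k} - \overline{\fdens}_k \geq 0$ in terms of the block variance $\mathrm{V}_k \defn \overline{\fdens^2}_k - \overline{\fdens}_k^2$.

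For the bound~\eqref{EqnBlockAccuracyEqual}, I would use subadditivity of the square root, $\sqrt{a+b} \leq \sqrt a + \sqrt b$, with $a = \overline{\fdens}_k^2$ and $b = \mathrm{V}_k$. This gives $\sqrt{\overline{\fdens^2}_k} - \overline{\fdens}_k \leq \sqrt{\mathrm{V}_k}$. On $I_k$ the function $\fdens$ takes values in an interval of length at most $\omega_\fdens(h)$, since any two points of $I_k$ are within distance $h$. Popoviciu's inequality (variance at most a quarter of the squared range) then yields $\sqrt{\mathrm{V}_k} \leq \omega_\fdens(h)/2$. Multiplying by $h$ and summing over the $\Btot$ blocks gives a total slack of $\Btot h\, \omega_\fdens(h)/2 = \frac{\Len}{2}\omega_\fdens(\Len/\Btot)$, as claimed.

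For~\eqref{EqnBlockAccuracyIntegrated}, I would instead use the exact identity
\begin{align*}
\sqrt{\overline{\fdens^2}_k} - \overline{\fdens}_k = \frac{\mathrm{V}_k}{\sqrt{\overline{\fdens^2}_k} + \overline{\fdens}_k} \leq \frac{\mathrm{V}_k}{2m},
\end{align*}
which uses $\fdens \geq m$ on the block. The main step is then the integrated variance bound. Write
\begin{align*}
\mathrm{V}_k = \frac{1}{2h^2}\int_{I_k}\int_{I_k} \big(\fdens(r)-\fdens(s)\big)^2 \, dr\, ds \leq \frac{1}{2h^2}\int_{I_k}\int_{I_k} \omega_\fdens(|r-s|)^2 \, dr\, ds .
\end{align*}
Change variables to $u = |r-s|$: on a square of side $h$, the set of pairs with $|r-s| \in [u, u+du]$ has measure $2(h-u)\,du$. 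This gives $\mathrm{V}_k \leq h^{-2}\int_0^h (h-u)\,\omega_\fdens(u)^2\,du$. Multiplying by $h/(2m)$ and summing over the $\Btot$ blocks produces $\frac{\Len}{2mh^2}\int_0^h (h-u)\,\omega_\fdens(u)^2\,du$.

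The only delicate point is this last computation, where the factor $(h-u)$ and the constants must come out exactly. Everything else is elementary: continuity of $\fdens$ ensures that all the integrals are finite, and $\omega_\fdens$ is measurable because it is monotone.
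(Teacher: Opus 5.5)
Your proposal is correct and follows the same route as the paper's proof. Both decompose the gap blockwise via the mean and variance of $\fdens$ on each block. For the first claim, both use $\sqrt{\mu^2+v}-\mu \leq \sqrt{v}$ together with the range bound $v \leq \tfrac{1}{4}\omega_\fdens(h)^2$. For the second, both use the exact identity with denominator at least $2m$ and the double-integral variance representation, where your explicit measure computation $2(h-u)\,du$ fills in a step the paper leaves implicit.
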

}
\noindent See~\Cref{SecProofLemBlockAccuracy} for the proof. \\

As an important special case, when $\fdens$ is Lipschitz with constant
$\Lip(\fdens)$, then these bounds imply that
\begin{subequations}
\begin{align}
\label{EqnBlockAccuracyLipschitz}
  \sqrt{2 \PartComp(\UniK)} \; \stackrel{(a)}{\leq} \; \sqrt{2
    \PartHfine(\Tinit, \Tfinal)} + \frac{\Lip(\fdens) \Len^2}{2
    \Btot}, \qquad \mbox{and} \quad \sqrt{2 \PartComp(\UniK)} \;
  \stackrel{(b)}{\leq} \; \sqrt{2 \PartHfine(\Tinit, \Tfinal)} +
  \frac{\Lip(\fdens)^2 \Len^3}{24 m \Btot^2}.
\end{align}
\end{subequations}
with inequality (b) holding when $\fdens(r) \geq m > 0$.  Observe that
this positivity condition improves the convergence rate from
$\Order(1/\Btot)$ to $\Order(1/\Btot^2)$.


\section{Proofs}
\label{SecProofs}

In this section, we collect the proofs of our main results, including
the proof of~\Cref{ThmMaster} in~\Cref{SecProofThmMaster}; and the
proof of~\Cref{ThmTrackMulti} in~\Cref{SecProofThmTrackMulti}.
Central to the proof of~\Cref{ThmMaster} is the one-step upper bound
on the KL error, stated as~\Cref{LemGeoOneStep} in~\Cref{LemOneStep}.
Proofs of all other claims, including all the model-specific results
stated in~\Cref{SecInfoUpper}, are deferred to the appendices.
  

\subsection{\texorpdfstring{Proof of~\Cref{ThmMaster}}%
  {Proof of the master theorem}}
\label{SecProofThmMaster}

We have stated our results using the standard heat path $X_t = \Zvar +
\sqrt{t} W$ parameterization of diffusion models.  For our analytical
purposes, it is more convenient to use the equivalent stochastic
localization (SL)~\cite{Eld13,Mon23} parameterization $\Ylam = \lam
X_{1/\lam}$ for $\lam > 0$. By construction, traversing the heat path
\emph{backwards} from $X_\Tfinal$ to $X_\Tinit$ is equivalent to
traversing the SL path \emph{forwards} from $Y_{\lamzero}$ to
$Y_{\lamone}$ where $\lamzero = 1/\Tfinal$ and $\lamone = 1/\Tinit$.
Since heat time represents variance, the inverse variable $\lambda$
represents the \emph{precision}.

\subsubsection{Converting to innovations space}

Define the denoiser $\Denoise_\lam(y) \defn \Exs[\Zvar \mid \Ylam =
  y]$.  By classical results in nonlinear filtering
theory~\cite{KalStr68,FujKalKun72}, the process $\{\Ylam \}_{\lam \geq
  0}$ can be represented as the solution of the \emph{innovations SDE}
\begin{subequations}
\begin{align}
  \label{EqnInnovationsSDE}
  d \Ylam & = \Denoise_\lam(\Ylam) \, d\lam + d \Blam, \qquad
  \mbox{where $\Blam$ is a Brownian motion.}
\end{align}
For a given set of grid points
$\lamzero = s_0 < s_1 < \cdots < s_N = \lamone$, the standard
Euler--Maruyama discretization of this stochastic innovations SDE is
given by
\begin{align}
  \label{EqnSIEuler}
  \YhatS{\jind + 1} & = \YhatS{\jind} + (s_{\jind + 1} - s_{\jind})
  \Denoise_{s_{\jind}}(\YhatS{\jind}) + \sqrt{s_{\jind + 1} -
    s_{\jind}} \, \xi_{\jind},
\end{align}
where $\xi_{\jind} \sim \Normal(0, \IdMat_\usedim)$.

This Euler scheme~\eqref{EqnSIEuler} has a one-to-one correspondence
with the modified heat-time Euler scheme~\eqref{EqnHeatAlgorithm}.  In
particular, let $\{\Yhat_\jind \}_{\jind \geq 0}$ be the SI-Euler
update~\eqref{EqnSIEuler} with grid points $\{s_\jind \}_{\jind \geq
  0}$, and let $\{\Xhat_\jind \}_{\jind \geq 0}$ be the heat-time
sequence~\eqref{EqnHeatAlgorithm}.  At each iteration $\jind$, the
correspondence is given by $ t_\jind = \frac{1}{s_\jind}$ and
$\Xhat_\jind = \frac{\Yhat_\jind}{s_\jind}$.  At the terminal
precision $s_N = \lamone = 1/\Tinit$, this correspondence gives
$\Xhat_N = \frac{\Yhat_N}{s_N} = \Tinit \Yhat_N$.  The precision-space
and heat-time denoisers are related by $\Denoise_\lam(y) =
\DenTime_{1/\lam}(y/\lam)$.  Finally, the precision-space MSE function
is $\gfun(\lam) \defn \Exs \big[ \| \Zvar - \Denoise_\lam(\Ylam)
  \|_2^2 \big]$, and the precision-space \dgc by
\begin{align}
  \label{EqnJinfoEquiv}
  \Jinfo(a, b) & \defn -\frac{1}{2}
  \int_a^b \lam \gfun'(\lam) \, d\lam,
  \quad \mbox{from which it follows that} \quad \Jinfo(a, b) =
  \Hinfo\left( \frac{1}{b}, \frac{1}{a} \right).
\end{align}
\end{subequations}

\subsubsection{Analysis of the one-step KL deficit} 
\label{LemOneStep}
For any pair $s, h > 0$, let $\Ptrue_{s, h}(y,\cdot)$ be the exact
transition kernel of the $\{\Ylam \}_{\lam > 0}$
process~\eqref{EqnInnovationsSDE} from $s$ to $s+h$.  The transition kernel defined by the discretized Euler
update~\eqref{EqnSIEuler} between $s$ and $s + h$ is
\begin{align}
 \label{EqnEulerOneStep}
\Qhat_{s, h}(u,\cdot) \defn \Normal \left(u + h \Denoise_s(u),h
\IdMat_\usedim\right),
\end{align}
where $\Normal(a,\mathbf{Q})$ denotes a Gaussian distribution with
mean $a$ and covariance $\mathbf{Q}$.  In particular, we establish the
following control:
\mygraybox{
\begin{lemma}[Multiplicative control of Euler one-step defect]
  \label{LemGeoOneStep}
For any $s, h > 0$, we have
\begin{align}
  \label{EqnGeoOneStep}
\underbrace{\Exs_{\Yvar_s} \left[\KL\left( \Ptrue_{s,
      h}(\Yvar_s,\cdot) \,\middle\|\, \Qhat_{s, h}(\Yvar_s,\cdot)
    \right) \right]}_{\equiv \EulKL(s, s + h) } & \leq \frac{h}{s}
\Jinfo(s, s + h).
\end{align}
\end{lemma}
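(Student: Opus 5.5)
The plan is to compare, conditionally on $\Yvar_s = y$, the law of the exact innovations path $\{\Ylam\}_{\lam \in [s, s+h]}$ solving~\eqref{EqnInnovationsSDE} with the law of the frozen-drift process $d\Ytil_\lam = \Denoise_s(y)\, d\lam + dB_\lam$, started from $\Ytil_s = y$. At time $s+h$ these two laws have marginals $\Ptrue_{s,h}(y,\cdot)$ and $\Qhat_{s,h}(y,\cdot)$ respectively. By the data-processing inequality, the KL divergence between the time-$(s+h)$ marginals is at most the KL divergence between the path measures.

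\textbf{Step 1 (Girsanov).} Girsanov's theorem identifies the path-space KL divergence with an integrated squared drift mismatch. Since the drift $\Denoise_\lam(\Ylam)$ is a function of the current state, the exact process is Markov. After averaging over $\Yvar_s$, this gives
\begin{align*}
\EulKL(s, s+h) \; \leq \; \frac{1}{2} \int_s^{s+h} \Exs \big\| \Denoise_\lam(\Ylam) - \Denoise_s(\Yvar_s) \big\|_2^2 \, d\lam.
\end{align*}

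\textbf{Step 2 (martingale identity).} I evaluate the integrand using the stochastic-localization analogue of~\eqref{EqnReverseMartingale}. The sequence $Z \to \Yvar_s \to \Yvar_\lam$ is not Markov in this order. Instead, as $\lam$ increases the observations $\Yvar_\lam$ generate an increasing filtration with $\Denoise_\lam(\Ylam) = \Exs[Z \mid \mathcal{F}_\lam]$, so $\lam \mapsto \Denoise_\lam(\Ylam)$ is a forward martingale. The Pythagorean argument used in the proof of~\Cref{LemDinfoSandwich} then yields
\begin{align*}
\Exs \|\Denoise_\lam(\Ylam) - \Denoise_s(\Yvar_s)\|_2^2 \; = \; \gfun(s) - \gfun(\lam) \; = \; -\int_s^\lam \gfun'(u)\, du \qquad \text{for } \lam \geq s,
\end{align*}
where $-\gfun' \geq 0$ because the MSE is nonincreasing in precision.

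\textbf{Step 3 (Fubini and a weight comparison).} Fubini's theorem gives
\begin{align*}
\frac{1}{2}\int_s^{s+h} (\gfun(s) - \gfun(\lam))\, d\lam \; = \; \frac{1}{2}\int_s^{s+h} (s + h - u)\,(-\gfun'(u))\, du.
\end{align*}
For $u \in [s, s+h]$ we have $s + h - u \leq h \leq \frac{h}{s}\, u$. Since $-\gfun' \geq 0$, the right-hand side is therefore at most
\begin{align*}
\frac{h}{s} \cdot \frac{1}{2} \int_s^{s+h} u\,(-\gfun'(u))\, du \; = \; \frac{h}{s}\, \Jinfo(s, s+h),
\end{align*}
which is exactly~\eqref{EqnGeoOneStep}. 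This last comparison is where the factor $h/s$, and hence the multiplicative structure of the bound, comes from.

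\textbf{Main obstacle.} The main obstacle is justifying Step 1 rigorously. Novikov's condition need not hold, since the only assumption is that $Z$ has finite second moments. I would first try the standard workaround: prove the KL bound via a localization/approximation argument (stopping times or truncated drifts), or use the known result that the KL chain rule gives the Girsanov bound whenever the right-hand side is finite. That right-hand side is bounded by $\frac{h}{2}\,\gfun(s) \leq \frac{h}{2}\,\Exs\|Z\|_2^2 < \infty$. Differentiability and integrability of $\gfun$, inherited from $\hfun$ via~\eqref{EqnJinfoEquiv}, justify the use of Fubini in Step 3.
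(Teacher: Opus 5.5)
Your argument is correct. Its final step coincides with the paper's: after Fubini, your comparison $s+h-u\le h\le \frac{h}{s}u$ is exactly how the paper passes from $\frac12\int_s^{s+h}\{\gfun(s)-\gfun(r)\}\,dr$ to $\frac{h}{s}\Jinfo(s,s+h)$.

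The difference lies in how you reach that intermediate quantity. You bound the one-step defect by a path-space KL (data processing plus Girsanov), then evaluate the drift mismatch with the forward-martingale identity $\mathbb{E}\|\Denoise_\lam(\Ylam)-\Denoise_s(\Yvar_s)\|_2^2=\gfun(s)-\gfun(\lam)$. The paper never leaves the two one-step laws. Conditionally on $\Yvar_s=u$, the exact kernel has entropy $\frac{d}{2}\log(2\pi e h)+I(\Zvar;\Yvar_{s+h}\mid \Yvar_s=u)$. Its cross-entropy against the Gaussian Euler kernel is $\frac{d}{2}\log(2\pi e h)+\frac{h}{2}\operatorname{tr}\operatorname{cov}(\Zvar\mid\Yvar_s=u)$. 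The conditional I--MMSE identity then turns $I(\Zvar;\Yvar_{s+h}\mid\Yvar_s)$ into $\frac12\int_s^{s+h}\gfun(r)\,dr$. This yields the intermediate expression as an exact identity rather than an upper bound.

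The paper's route is elementary: it uses no stochastic calculus, so the Novikov and localization issues you flag never arise. Its equality also supports the remark that the bound is sharp up to a factor of two for short steps.

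Your route has its own advantages:
\begin{itemize}
\item It follows the standard Girsanov template and reuses the martingale structure behind Lemma~\ref{LemDinfoSandwich} instead of I--MMSE.
\item It gives the learned-score extension at once. The cross term $\mathbb{E}\langle \Denoise_\lam(\Ylam)-\Denoise_s(\Yvar_s),\,\Denoise_s(\Yvar_s)-\widehat{\Denoise}_s(\Yvar_s)\rangle$ vanishes by the martingale property, leaving exactly the additive $\frac{h}{2}\mathbb{E}\|\widehat{\Denoise}_s(\Yvar_s)-\Denoise_s(\Yvar_s)\|_2^2$.
\end{itemize}

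You also lose nothing in the data-processing step. Conditionally on $\Yvar_s=y$, the exact path on $[s,s+h]$ is a posterior mixture over $z$ of Brownian motions with constant drift $z$. Conditioned further on $\Yvar_{s+h}$, both path laws are therefore the same Brownian bridge, so your inequality is in fact the paper's equality.

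The same representation resolves your main obstacle without any approximation argument. Both path measures have explicit Cameron--Martin densities with respect to Wiener measure that depend only on the endpoint. Hence they are genuine densities, and no Novikov condition is needed. The Girsanov expression for the KL then follows because $\mathbb{E}\|\Denoise_\lam(\Ylam)\|_2^2\le\mathbb{E}\|\Zvar\|_2^2<\infty$, which makes the stochastic-integral term mean zero.
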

}
\noindent See~\Cref{SecProofLemGeoOneStep} for the proof. \\

It is straightforward to complete the proof of~\Cref{ThmMaster} using
this result.  In particular, by combining the data processing
inequality with the chain rule for KL divergences, we have
\begin{align}
\KL(\Prob_\Tinit \| \Qprob_\Tinit) \; \leq \;
\sum_{\jind=0}^{\Nscore-1} \EulKL(s_\jind, s_{\jind+1}) +
\KL(\Prob_\Tfinal \| \Qprob_\Tfinal) & \; \stackrel{(i)}{\leq} \;
\sum_{\jind=0}^{\Nscore-1} \big \{ \frac{s_{\jind+1}}{s_\jind} - 1
\big \} \Jinfo(s_\jind, s_{\jind +1 }) + \KL(\Prob_\Tfinal \|
\Qprob_\Tfinal) \notag \\
\label{EqnFred}
& \stackrel{(ii)}{=} \sum_{\jind=0}^{\Nscore-1} \big \{
\frac{t_{\jind}}{t_{\jind+1}} - 1 \big \} \Hinfo(t_{\jind + 1},
t_{\jind} ) + \KL(\Prob_\Tfinal \| \Qprob_\Tfinal),
\end{align}
where step (i) follows by applying the bound~\eqref{EqnGeoOneStep}
with the choices $h = s_{\jind+1} - s_\jind$ and $s = s_\jind$ for
each block; and step (ii) follows by the stepsize conversion $t =
1/s$, and the relation $\Jinfo(a,b) = \Hinfo(1/b, 1/a)$ from
equation~\eqref{EqnJinfoEquiv}.

\paragraph{Extension to noisy scores:}  The extension to noisy
scores is straightforward.  Suppose that instead of using the true
denoising functions $\Denoise_s$, we make use of an estimate
$\widehat{\Denoise}_s$.  The Euler method then has the modified
one-step Gaussian transition kernel $\widehat{\Qhat}_{s, h}(u, \cdot)
\defn \Normal\left( u + h \widehat{\Denoise}_s(u), h \IdMat_\usedim
\right)$.  As shown in the proof of~\Cref{LemGeoOneStep}, the estimated
denoising functions lead to an additive perturbation, in that we have
the perturbed upper bound
\begin{align}
  \label{EqnGeoOneStepNoisy}
  \Exs_{\Yvar_s} \left[ \KL\left( \Ptrue_{s, h}(\Yvar_s, \cdot)
    \,\middle\|\, \widehat{\Qhat}_{s, h}(\Yvar_s, \cdot) \right)
    \right] & \leq \frac{h}{s} \Jinfo(s, s + h) + \frac{h}{2} \Exs
  \left[ \left\| \widehat{\Denoise}_s(\Yvar_s) - \Denoise_s(\Yvar_s)
    \right\|_2^2 \right].
\end{align}
Consequently, we can simply repeat the proof leading to the bound~\eqref{EqnFred},
and these score error terms will accumulate additively across blocks, leading
to the correction term~\eqref{EqnScoreError}.


\subsubsection{\texorpdfstring{Proof of~\Cref{LemGeoOneStep}}%
  {Proof of the geometric one-step lemma}}
\label{SecProofLemGeoOneStep}

Our proof is based on the exact representation
\begin{subequations}
\begin{align}
\label{EqnEulerExact}
\EulKL(s, s + h) & = \frac{h}{2} \gfun(s) - \frac{1}{2}
\int_{s}^{s + h} \gfun(r) \, dr,
\end{align}
which we prove below.  Taking this representation as given, let us
prove the claimed inequality.  Since we can write $\frac{h}{2}
\gfun(s) = \frac{1}{2} \int_{s}^{s + h} \gfun(s) \, dr$, we have
\begin{align}
  \label{eq:block-kl-one-interval}
  \EulKL(s, s + h) & = \frac{1}{2} \int_{s}^{s + h}
  \big\{ \gfun(s) - \gfun(r) \big\} \, dr.
\end{align}
\end{subequations}
Since the MSE function $\gfun$ is non-increasing, we have $\gfun'(u)
\leq 0$ pointwise, and hence
\begin{align*}
  \gfun(s) - \gfun(r) & = - \int_{s}^{r} \gfun'(u) \, du
  \leq - \int_{s}^{s + h} \gfun'(u) \, du,
\end{align*}
  using the fact that $\gfun'(u) \leq 0$ almost everywhere.  Combining
  this inequality with the
  representation~\eqref{eq:block-kl-one-interval} yields $\EulKL(s,
  s+h) \leq - \frac{h}{2} \int_{s}^{s + h} \gfun'(r) dr$. For any $r
  \geq s$, we have $h \{-\gfun'(r)\} = \frac{h}{r}\{-r \gfun'(r)\}
  \leq \frac{h}{s}\{-r\gfun'(r)\}$, and hence
  \begin{align*}
    \EulKL(s, s+h) & \leq \frac{h}{s} \; \left \{ -\frac{1}{2}
    \int_{s}^{s +h} r\gfun'(r) dr \right \} \; = \; \frac{h}{s} \;
    \Jinfo(s, s + h),
  \end{align*}
as claimed.

\paragraph{Proof of the exact representation~\eqref{EqnEulerExact}:}
Conditional on $\Yvar_s = u$, the exact transition has the
representation $\Yvar_{s+h} = u + h \Zvar+\sqrt h\,\xi$, where $\Zvar$
follows its posterior law given $\Yvar_s=u$ and the Gaussian noise
$\xi\sim\Normal(0,\IdMat_\usedim)$ is independent of $\Zvar$.  Taking
expectations with respect to $\Yvar_{s+h}$, we find that the entropy
of the exact transition is given by
\begin{align*}
-\Exs_{Y_{s+h}}[\log \Ptrue_{s,h}(u, Y_{s+h})] & = \frac{\usedim}{2}
\log(2\pi e h) + I(\Zvar; \Yvar_{s+h} \mid \Yvar_s = u),
\end{align*}
using the fact that conditional on both $\Zvar$ and $\Yvar_s=u$,
the transition noise is Gaussian with covariance $h \IdMat_\usedim$.
On the other hand, from the Gaussian form~\eqref{EqnEulerOneStep} of
the one-step Euler law, we can compute the cross-entropy
\begin{align*}
- \Exs_{Y_{s+h}}[\log \Qhat_{s,h}(u, Y_{s+h})] & = \frac{\usedim}{2}
\log(2\pi eh) + \frac{h}{2} \Tr \Cmat_s(u),
\end{align*}
where $\Cmat_s(u) \defn \Cov( \Zvar \mid \Yvar_s = u)$.

Using these two equations, we have
\begin{align*}
\KL\left( \Ptrue_{s, h}(u,\cdot) \,\middle\|\, \Qhat_{s, h}(u,\cdot)
\right) = \Exs_{Y_{s+h}} \Big[ \log \frac{\Ptrue_{s,h}(u,
    Y_{s+h})}{\Qhat_{s,h}(u; Y_{s+h})} \Big] & = \frac{h}{2} \Tr
\Cmat_s(u) - \ShannonInfo(\Zvar;\Yvar_{s+h}\mid\Yvar_s = u).
\end{align*}
Taking expectations over the marginal distribution of $\Yvar_s$ yields
\begin{align}
\label{EqnMina}  
\EulKL(s, s + h) & = \frac{h}{2} \gfun(s)
- \ShannonInfo(\Zvar; \Yvar_{s + h} \mid \Yvar_s).
\end{align}
Finally, by the conditional I--MMSE identity for the Gaussian
observation process~\cite{GuoEtAl05}, for almost every $r > s$, we
have $\frac{1}{2} \gfun(r) = \frac{d}{d r} \ShannonInfo(\Zvar; \Yvar_r
\mid \Yvar_s)$.  Integrating from $s$ to $s + h$ yields
\begin{align*}
\ShannonInfo(\Zvar; \Yvar_{s + h} \mid \Yvar_s) & =
\underbrace{\ShannonInfo(\Zvar; \Yvar_s \mid \Yvar_s)}_{= 0} \; + \;
\frac{1}{2} \int_s^{s + h} \gfun(r) \, dr.
\end{align*}
Combining with equation~\eqref{EqnMina} completes the proof.

\paragraph{Extension to learned scores:}
For the learned kernel, expanding the Gaussian cross-entropy after
shifting the Euler mean from $u + h \Denoise_s(u)$ to $u + h
\widehat{\Denoise}_s(u)$ shows that the cross term vanishes exactly,
since $\Exs\left[ \Yvar_{s + h} - \Yvar_s - h \Denoise_s(\Yvar_s)
  \,\middle|\, \Yvar_s \right] = 0$.  Consequently, the increase in
expected one-step KL is given by the additive term $\frac{h}{2} \Exs
\left[ \left\| \widehat{\Denoise}_s(\Yvar_s) - \Denoise_s(\Yvar_s)
  \right\|_2^2 \right]$, as claimed.



\subsection{Proof of~\Cref{ThmTrackMulti}}
\label{SecProofThmTrackMulti}

We split our proof into two parts, corresponding to the upper
bound~\eqref{EqnFixedBudgetKL} and the lower bound~\eqref{EqnDGCLower}.

\subsubsection{Proof of the upper bound~\eqref{EqnFixedBudgetKL}}

It is convenient to introduce the shorthand $a_\bind \defn 4
\frac{\sqrt{\PartH}}{\Nscore}
\sqrt{\frac{\Sinfo_\bind}{\Hblk_\bind}}$, so that $\rho_\bind =
\min\{1, a_\bind\}$.

\paragraph{Certifying the accuracy:}  We first prove that
$\sum_{\bind =0}^{\Btot-1} \rho_\bind \Hinfo_k \leq \frac{4
  \PartH}{\Nscore}$.  Since $\rho_\bind \leq a_\bind$, we have
\begin{align*}
  \sum_{\bind = 0}^{\Btot - 1} \rho_\bind \Hblk_\bind & \leq 4
  \frac{\sqrt{\PartH}}{\Nscore} \sum_{\bind = 0}^{\Btot - 1}
  \sqrt{\Sinfo_\bind \Hblk_\bind} = \frac{4 \PartH}{\Nscore},
\end{align*}
where we have used the definition of $\PartH$.

\paragraph{Certifying the iteration count:}  Next we show
that our choice of multipliers $\rho_\bind$ leads to block iteration
counts that satisfy the constraint $\sum_{\bind=0}^{\Btot-1} N_\bind \leq \Nscore$.
Traversing block $\bind$ requires a total of $N_\bind = \left\lceil
\Sinfo_\bind/ \log(1 + \rho_\bind) \right\rceil$ score evaluations.
Thus, we have
\begin{align*}
\sum_{\bind = 0}^{\Btot - 1} N_\bind \leq \Btot + \sum_{\bind =
  0}^{\Btot - 1} \frac{\Sinfo_\bind}{\log(1 + \rho_\bind)} \;
\stackrel{(i)}{\leq} \; \Btot + 2 \sum_{\bind = 0}^{\Btot - 1}
\frac{\Sinfo_\bind}{\rho_\bind} & \stackrel{(ii)}{\leq} \Btot + 2
\sum_{\bind = 0}^{\Btot - 1} \Big \{ \Sinfo_\bind +
\frac{\Sinfo_\bind}{a_\bind} \Big \} \\
& \stackrel{(iii)}{\leq} \Btot + 2 \log(\Tfinal/\Tinit) +
\frac{\Nscore}{4} \\
& \stackrel{(iv)}{\leq}  \Nscore,
\end{align*}
where step (i) follows since $\log(1 + \rho) \geq \rho/2$ for $\rho
\in [0,1]$; step (ii) follows since $\frac{1}{\rho_\bind} =
\max\left\{ 1, \frac{1}{a_\bind} \right\} \leq 1 + \frac{1}{a_\bind}$;
and step (iii) follows since $\sum_{\bind=0}^{\Btot - 1} \Sinfo_\bind
= \log(\Tfinal/\Tinit)$, and
\begin{align*}
  \sum_{\bind =0}^{\Btot-1} \frac{\Sinfo_\bind}{a_\bind} = \frac{1}{4}
  \sum_{\bind=0}^{\Btot-1} \Sinfo_\bind \frac{\Nscore}{\sqrt{\PartH}}
  \sqrt{\frac{\Hblk_\bind}{\Sinfo_\bind}} \; = \; \frac{N}{2}
  \frac{1}{\sqrt{\PartH}} \sum_{\bind=0}^{\Btot-1} \sqrt{\Sinfo_\bind
    \Hblk_\bind} \; = \; \frac{\Nscore}{2}
\end{align*}
using the definition of $\PartH$. Finally, step (iv) follows from the
assumed lower bound on the iteration number $\Nscore \geq 2 \big(
\Btot + 2 \log(\Tfinal/\Tinit) \big)$.

\subsubsection{Proof of the lower bound~\eqref{EqnDGCLower}}
Fix an arbitrary feasible integer allocation $(N_0, \ldots, N_{\Btot -
  1})$ in equation~\eqref{EqnKblockDP}, so that we have $N_\bind \geq
1$ and $\sum_{\bind = 0}^{\Btot - 1} N_\bind = N$.  Using these block
iteration counts, we define the geometric multiplier
$\rho_\bind^{\mathrm{alloc}} \defn \exp\left\{
\frac{\Sinfo_\bind}{N_\bind} \right\} - 1$.  Since $e^x - 1 \geq x$,
we have the lower bound
\begin{align}
\label{EqnDumpling}  
  \sum_{\bind = 0}^{\Btot - 1} \rho_\bind^{\mathrm{alloc}} \Hblk_\bind
  & \geq \sum_{\bind = 0}^{\Btot - 1} \frac{\Sinfo_\bind
    \Hblk_\bind}{N_\bind}.
\end{align}
Consequently, we have the upper bound
\begin{align*}
\PartH = \left( \sum_{\bind = 0}^{\Btot - 1} \sqrt{\Sinfo_\bind
  \Hblk_\bind} \right)^2 & \stackrel{(i)}{\leq} \left( \sum_{\bind =
  0}^{\Btot - 1} \frac{\Sinfo_\bind \Hblk_\bind}{N_\bind} \right)
\underbrace{\left( \sum_{\bind = 0}^{\Btot - 1} N_\bind \right)}_{= N}
\; \stackrel{(ii)}{\leq} \; \left (\sum_{\bind = 0}^{\Btot - 1}
\rho_\bind^{\mathrm{alloc}} \Hblk_\bind \right) \; N,
\end{align*}
where step (i) follows from the Cauchy--Schwarz inequality; and step
(ii) follows from inequality~\eqref{EqnDumpling}.  Re-arranging yields
the lower bound~\eqref{EqnDGCLower}.


\subsubsection*{Acknowledgements} This work was partially supported by
a Guggenheim Fellowship, an NSF grant (DMS-2311072), and the Ford
Professorship at MIT.


\bibliographystyle{alpha_initials}

{\small{
\bibliography{merged_clean}
}}


\appendix



\section{Properties of the \dgc function}
\label{AppPath}

In this section, we collect together the proofs of various properties
of the \dgc function~\eqref{EqnDefnHinfo}.

\subsection{Proof of the information representation~\eqref{EqnHinfoInfoRep}}
\label{SecProofEqnHinfoRep}

By the chain rule for derivatives, we have the identity
$\frac{\hfun'(t)}{t} = \frac{d}{dt} \left\{ \frac{\hfun(t)}{t}
\right\} + \frac{\hfun(t)}{t^2}$.  Consequently, we can write
\begin{align*}
  \Hinfo(a, b) & \stackrel{(i)}{=} \frac{1}{2} \int_a^b
  \frac{\hfun'(t)}{t} dt \; \stackrel{(ii)}{=}\; \frac{1}{2} \left\{
  \frac{\hfun(b)}{b} - \frac{\hfun(a)}{a} \right\} + \frac{1}{2}
  \int_a^b \frac{\hfun(t)}{t^2} \, dt,
\end{align*}
where step (i) follows from the definition~\eqref{EqnDefnHinfo} of
$\Hinfo$, and step (ii) follows by substituting our chain rule
expression and computing the integral for the first term.  For the heat
channel $X_t = \Zvar + \sqrt{t} W$, the I--MMSE
identity~\cite{GuoEtAl05}
asserts that $\frac{d}{dt} \Info(\Zvar; X_t) = -\frac{\hfun(t)}{2
  t^2}$.  Integrating from $a$ to $b$ and substituting into the
preceding display yields
\begin{align*}
  \Hinfo(a, b) & = \Info(\Zvar; X_a) - \Info(\Zvar; X_b) + \frac{1}{2}
  \left\{ \frac{\hfun(b)}{b} - \frac{\hfun(a)}{a} \right\},
\end{align*}
as claimed.


\subsection{Proof of~\Cref{LemDinfoSandwich}}
\label{SecProofLemDinfoSandwich}

Recall the forward heat path $X_t = \Zvar + B_t$, where $\{B_t \}_{t
  \geq 0}$ is a Brownian motion.  We observe that the denoising
operator $\DenTime_t$ satisfies the orthogonality property $\Exs
\big[\inprod{\Zvar - \DenTime_t(X_t)}{\DenTime_t(X_t)} \big] = 0$,
from which it follows that $\hfun(t) = \Exs \|\Zvar - \DenTime_t(X_t)\|_2^2
= \Exs \| \Zvar \|_2^2 - \Exs \|\DenTime_t(X_t)\|_2^2$.
Consequently, we have the equivalence
\begin{align}
\label{EqnHfunEquiv}  
\hfun(t) - \hfun(s) = \Exs \|\DenTime_s(X_s)\|_2^2 - \Exs
\|\DenTime_t(X_t)\|_2^2,
\end{align}
which we use in the proof below.

Beginning with the definition~\eqref{EqnDefnDinfo} of $\Dinfo$,
expanding the squared norm yields
\begin{align*}
\Dinfo(s, t) = \Exs \| \DenTime_s(X_s) - \DenTime_t(X_t) \|_2^2 & =
\Exs \| \DenTime_s(X_s) \|_2^2 - 2 \Exs \big[
  \inprod{\DenTime_s(X_s)}{\DenTime_t(X_t)} \big] + \Exs \|
\DenTime_t(X_t) \|_2^2.
\end{align*}
From equation~\eqref{EqnReverseMartingale}, recall the reverse
martingale property $\Exs [\DenTime_s(X_s) \mid X_t ] =
\DenTime_t(X_t)$ for pairs $0 < s < t$.  Using this fact combined with
the tower relation for conditional expectations, we can compute
\begin{align*}
\Exs \big[ \inprod{\DenTime_s(X_s)}{\DenTime_t(X_t)} \big] & = \Exs
\big[ \inprod{\Exs[\DenTime_s(X_s) \mid X_t]}{\DenTime_t(X_t)} \big]
  \; = \; \Exs \| \DenTime_t(X_t) \|_2^2,
\end{align*}
and as a consequence $\Dinfo(s, t) = \Exs \| \DenTime_s(X_s) \|_2^2 -
\Exs \| \DenTime_t(X_t) \|_2^2$.  Combining with the
equivalence~\eqref{EqnHfunEquiv} proves the
claim~\eqref{EqnDinfoExplicit}.


\section{Results on data-dependent estimation}

In this section, we collect together the proofs of various results in
data-dependent estimation, including the proof of~\Cref{PropDataSingle}
in~\Cref{SecProofPropDataSingle} and the proof of the perturbed
sandwich~\eqref{EqnRobustSandwich}
in~\Cref{SecProofEqnRobustSandwich}.


\subsection{Proof of~\Cref{PropDataSingle}}
\label{SecProofPropDataSingle}

We claim that it suffices to establish the two-sided tail bound
\begin{align}
  \label{EqnPogacar}
  \left| \Hupper(a,b) - \Hhat(a,b) \right| & \leq \HackErr \qquad
  \mbox{with probability at least \(1 - \eta\),}
\end{align}
with the residual $\HackErr$ from equation~\eqref{EqnDefnHackErr}.
Indeed, on the event that the bound~\eqref{EqnPogacar} holds,
the sandwich
relation~\eqref{EqnNewSandwich} between the pair $\Hinfo(a,b)$ and
$\Hupper(a,b)$ gives
\begin{align*}
  \Hinfo(a,b) & \leq \Hupper(a,b) \leq \Hhat(a,b) + \HackErr, \quad
  \mbox{as well as} \\
  \Hhat(a,b) + \HackErr & \leq \Hupper(a,b) + 2 \HackErr \leq 2 \big\{
  \Hinfo(a,b) + \HackErr \big\}.
\end{align*}
Combining these two inequalities yields the sandwich
claim~\eqref{EqnDataSingle} of~\Cref{PropDataSingle}.

\paragraph{Proof of the tail bound~\eqref{EqnPogacar}:} Introduce
the shorthand $\Delta_\ell \defn
\DenTime_{\dyadic_\ell}(X_{\dyadic_\ell}) - \DenTime_{\dyadic_{\ell +
    1}}(X_{\dyadic_{\ell + 1}})$ and recall that $\Qfun = \frac{1}{2}
\sum_{\ell = 0}^{\DyaTot - 1}
\frac{\|\Delta_\ell\|_2^2}{\dyadic_\ell}$.  By Minkowski's inequality
in $L^{p/2}$, together with the triangle inequality in $L^p$, we have
\begin{align*}
  \left( \Exs\|\Delta_\ell\|_2^p \right)^{1/p} \leq \left( \Exs\left\|
  \DenTime_{\dyadic_\ell}(X_{\dyadic_\ell}) \right\|_2^p \right)^{1/p}
  + \left( \Exs\left\| \DenTime_{\dyadic_{\ell + 1}}(X_{\dyadic_{\ell
      + 1}}) \right\|_2^p \right)^{1/p} \leq 2 M_p,
\end{align*}
and consequently
\begin{align*}
  \left( \Exs[\Qfun^{p/2}] \right)^{2/p} & \leq \frac{1}{2} \sum_{\ell
    = 0}^{\DyaTot - 1} \frac{ \left( \Exs\|\Delta_\ell\|_2^p
    \right)^{2/p} }{\dyadic_\ell} \; \leq \; 2 M_p^2 \sum_{\ell =
    0}^{\DyaTot - 1} \frac{1}{\dyadic_\ell} \leq \frac{4 M_p^2}{a},
\end{align*}
where we used the dyadic relation $\dyadic_\ell = 2^\ell a$.

We now bound the bias in the truncated variable $\Qfun_\tau^{(i)}
\defn \min\{ \Qfunup{i}, \tau \}$.  Since $p/2 > 1$, we have
\begin{align}
  \label{EqnTruncatedBias}
  \Exs[\Qfun] - \Exs[\Qfun_\tau^{(i)}] & = \Exs[(\Qfun - \tau)_+] \leq
  \frac{\Exs[\Qfun^{p/2}]}{\tau^{p/2 - 1}} \leq \frac{ \left( 4 M_p^2
    / a \right)^{p/2} }{ \tau^{p/2 - 1} }.
\end{align}

Next we apply the empirical Bernstein inequality of Maurer and
Pontil~\cite[Theorem~4]{MauPont09}.  Apply this result to the
independent variables $\Qfun_\tau^{(i)} / \tau \in [0,1]$, and then to
$1 - (\Qfun_\tau^{(i)} / \tau)$, in each case with failure probability
$\eta / 2$.  Since the two collections have the same sample variance,
a union bound over the pair yields
\begin{align*}
  \left| \Exs[\Qfun_\tau^{(i)}] - \Hhat(a,b) \right| & \leq \sqrt{
    \frac{ 2 \Vhat \log(4 / \eta) }{\numobs} } + \frac{ 7 \tau \log(4
    / \eta) }{ 3(\numobs - 1) }
\end{align*}
with probability at least $1 - \eta$.  Recalling that $\Exs[\Qfun] =
\Hupper(a,b)$, and combining this inequality with the bias
bound~\eqref{EqnTruncatedBias} gives us
\begin{align*}
  \left| \Hupper(a,b) - \Hhat(a,b) \right| & \leq \sqrt{ \frac{ 2
      \Vhat \log(4 / \eta) }{\numobs} } + \frac{ 7 \tau \log(4 / \eta)
  }{ 3(\numobs - 1) } + \frac{ \left( 4 M_p^2 / a \right)^{p/2} }{
    \tau^{p/2 - 1} }.
\end{align*}
This bound holds for every deterministic choice of the truncation
level \(\tau\).  Substituting the choice of \(\tau\) from
equation~\eqref{EqnDefnHhat}, and using $\sup_{p \geq 4} \frac{p}{p -
  2} \left( \frac{p - 2}{2} \right)^{2/p} \leq 2$ yields the
claim~\eqref{EqnPogacar}.

  
\subsection{Proof of the perturbed sandwich relation}
\label{SecProofEqnRobustSandwich}

Define the score error increments $e_\ell \defn
\DenTimeHat_{\dyadic_\ell}(X_{\dyadic_\ell}) -
\DenTime_{\dyadic_\ell}(X_{\dyadic_\ell})$ for \mbox{$\ell = 0,
  \ldots, \DyaTot$,} and for $\ell = 0, \ldots, \DyaTot - 1$, the
exact and learned increments
\begin{align*}
  U_\ell & \defn \DenTime_{\dyadic_\ell}(X_{\dyadic_\ell}) -
  \DenTime_{\dyadic_{\ell + 1}}(X_{\dyadic_{\ell + 1}}), \quad
  \mbox{and} \quad \widehat U_\ell \defn
  \DenTimeHat_{\dyadic_\ell}(X_{\dyadic_\ell}) -
  \DenTimeHat_{\dyadic_{\ell + 1}}(X_{\dyadic_{\ell + 1}}).
\end{align*}
Observe that these definitions ensure that $\widehat U_\ell = U_\ell +
e_\ell - e_{\ell + 1}$.  In terms of the weighted quadratic norm
$\|V\|_{\mathcal H}^2 \defn \frac{1}{2} \sum_{\ell = 0}^{\DyaTot - 1}
\frac{\Exs \|V_\ell\|_2^2}{\dyadic_\ell}$, we have the convenient
equivalences
\begin{align*}
\Hupper(a,b) = \|U\|_{\mathcal H}^2, \quad \mbox{and} \quad \Hmod(a,b)
= \|\widehat U\|_{\mathcal H}^2.
\end{align*}
Hence, the reverse triangle inequality yields
\begin{align*}
  \left| \sqrt{\Hmod(a, b)} - \sqrt{\Hupper(a, b)} \right| & \leq
  \left\| \{e_\ell - e_{\ell + 1}\}_{\ell = 0}^{\DyaTot - 1}
  \right\|_{\mathcal H}.
\end{align*}
Using the upper bound $\|e_\ell - e_{\ell + 1}\|_2^2 \leq 2
\|e_\ell\|_2^2 + 2 \|e_{\ell + 1}\|_2^2$, the dyadic relation
$\dyadic_{\ell + 1} \leq 2 \dyadic_\ell$, and shifting the index in
the second sum, we obtain
\begin{align*}
  \left\| \{e_\ell - e_{\ell + 1}\}_{\ell = 0}^{\DyaTot - 1}
  \right\|_{\mathcal H}^2 & \leq \sum_{\ell = 0}^{\DyaTot - 1}
  \frac{\Exs \|e_\ell\|_2^2}{\dyadic_\ell} + 2 \sum_{\ell =
    1}^{\DyaTot} \frac{\Exs \|e_\ell\|_2^2}{\dyadic_\ell} \leq 3
  \DenError(a, b).
\end{align*}
This bound implies that
\begin{align}
\label{EqnHuez}  
  \left( \sqrt{\Hmod(a, b)} - \sqrt{3 \DenError(a, b)} \right)_+ \;
  \leq \; \sqrt{\Hupper(a, b)} \; \leq \; \sqrt{\Hmod(a, b)} + \sqrt{3
    \DenError(a, b)}.
\end{align}
Recall that $\frac{1}{2} \Hupper(a, b) \leq \Hinfo(a, b) \leq
\Hupper(a, b)$ from the sandwich guarantee~\eqref{EqnNewSandwich}.
Squaring the bounds~\eqref{EqnHuez} and combining with this sandwich
yields
\begin{align*}
\frac{1}{2} \left( \sqrt{\Hmod(a, b)} - \sqrt{3 \DenError(a, b)}
\right)_+^2 \; \leq \; \Hinfo(a, b) \; \leq \; \left( \sqrt{\Hmod(a,
  b)} + \sqrt{3 \DenError(a, b)} \right)^2,
\end{align*}
as claimed in equation~\eqref{EqnRobustSandwich}.


\section{Proofs of model-specific results}

In this section, we collect together the proofs of the propositions
from~\Cref{SecInfoUpper}.


\subsection{Proof of~\Cref{PropCovariance}}
\label{SecProofPropCovariance}

Let $\Gvar_1$ and $\Gvar_2$ be independent standard Gaussian vectors,
independent of $\Zvar$, and consider the coupled heat-path
observations $\Xvar_\Tinit \defn \Zvar + \sqrt{\Tinit} \Gvar_1$, and
$\Xvar_\Tfinal \defn \Xvar_\Tinit + \sqrt{\Tfinal - \Tinit} \Gvar_2$.
This coupling produces the Markov chain $\Zvar \longrightarrow
\Xvar_\Tinit \longrightarrow \Xvar_\Tfinal$, so that by the chain rule
for mutual information, we can write
\begin{subequations}
  \begin{align}
    \label{EqnConditionalGaussianChainRule}
    \ShanInfo(\Zvar; \Xvar_\Tinit) - \ShanInfo(\Zvar; \Xvar_\Tfinal) &
    = \ShanInfo(\Zvar; \Xvar_\Tinit \mid \Xvar_\Tfinal) \; = \;
    \Ent(\Xvar_\Tinit \mid \Xvar_\Tfinal) - \Ent(\Xvar_\Tinit \mid
    \Zvar, \Xvar_\Tfinal).
  \end{align}
  Conditional on $\Zvar$, the two noise vectors $\Xvar_\Tinit - \Zvar$
  and $\Xvar_\Tfinal - \Zvar$ are jointly Gaussian, with respective
  covariance matrices $\Tinit \IdMat_\usedim$ and $\Tfinal
  \IdMat_\usedim$ and cross-covariance $\Tinit \IdMat_\usedim$.  By
  the formula for Gaussian conditional covariances, we have
  $\Cov(\Xvar_\Tinit \mid \Zvar, \Xvar_\Tfinal) = \Tinit
  \IdMat_\usedim - \Tinit \IdMat_\usedim (\Tfinal \IdMat_\usedim)^{-1}
  \Tinit \IdMat_\usedim = \frac{\Tinit(\Tfinal - \Tinit)}{\Tfinal}
  \IdMat_\usedim$.  By the formula for multivariate Gaussian entropy,
  we have
  \begin{align}
    \label{EqnConditionalGaussianBridgeEntropy}
    \Ent(\Xvar_\Tinit \mid \Zvar, \Xvar_\Tfinal)
    & = \frac{\usedim}{2}
    \log\left(
      2 \pi e \frac{\Tinit(\Tfinal - \Tinit)}{\Tfinal}
    \right).
  \end{align}

Translating $\Zvar$, $\Xvar_\Tinit$, and $\Xvar_\Tfinal$ does not
change any of the entropies or mutual informations above, so we may
assume without loss of generality that $\Exs \Zvar = 0$.  Define the
matrix $\Amat \defn (\CovZ + \Tinit \IdMat_\usedim) (\CovZ + \Tfinal
\IdMat_\usedim)^{-1}$.  Translation invariance of conditional entropy
and the fact that conditioning reduces entropy yields
\begin{align*}
  \Ent(\Xvar_\Tinit \mid \Xvar_\Tfinal)
  & = \Ent(\Xvar_\Tinit - \Amat \Xvar_\Tfinal
  \mid \Xvar_\Tfinal) \; \leq \; \Ent(\Xvar_\Tinit - \Amat \Xvar_\Tfinal).
\end{align*}
Under the Markov coupling, we can compute the covariance matrices
$\Cov(\Xvar_\Tinit) = \CovZ + \Tinit \IdMat_\usedim$,
\mbox{$\Cov(\Xvar_\Tfinal) = \CovZ + \Tfinal \IdMat_\usedim$,} and
$\Cov(\Xvar_\Tinit, \Xvar_\Tfinal) = \CovZ + \Tinit \IdMat_\usedim$.
It follows that the residual covariance is
\begin{align}
    \Cov(\Xvar_\Tinit - \Amat \Xvar_\Tfinal) & = \CovZ + \Tinit
    \IdMat_\usedim - (\CovZ + \Tinit \IdMat_\usedim) (\CovZ + \Tfinal
    \IdMat_\usedim)^{-1} (\CovZ + \Tinit \IdMat_\usedim) \notag\\
    \label{EqnConditionalGaussianResidualCovariance}    
    & = (\Tfinal - \Tinit) (\CovZ + \Tinit \IdMat_\usedim) (\CovZ +
    \Tfinal \IdMat_\usedim)^{-1}.
  \end{align}
Applying the Gaussian maximum-entropy inequality to the residual in
equation~\eqref{EqnConditionalGaussianResidualCovariance} gives
\begin{align*}
  \Ent(\Xvar_\Tinit \mid \Xvar_\Tfinal) & \leq
\frac{1}{2} \log \det\left[ 2 \pi e (\Tfinal - \Tinit) (\CovZ + \Tinit
  \IdMat_\usedim) (\CovZ + \Tfinal \IdMat_\usedim)^{-1} \right].
  \end{align*}
Subtracting equation~\eqref{EqnConditionalGaussianBridgeEntropy} from
equation~\eqref{EqnConditionalGaussianChainRule} and collecting scalar
determinant factors yields
  \begin{align*}
\ShanInfo(\Zvar; \Xvar_\Tinit) - \ShanInfo(\Zvar; \Xvar_\Tfinal) &
    \leq \frac{1}{2} \log \det\left[ \frac{\Tfinal}{\Tinit} (\CovZ +
      \Tinit \IdMat_\usedim) (\CovZ + \Tfinal \IdMat_\usedim)^{-1}
      \right] \; = \; \frac{1}{2} \log \det\left[ \left(
      \IdMat_\usedim + \frac{\CovZ}{\Tinit} \right) \left(
      \IdMat_\usedim + \frac{\CovZ}{\Tfinal} \right)^{-1} \right].
  \end{align*}
Diagonalizing $\CovZ$ to expose the eigenvalues $\{\eigval_j
\}_{j=1}^d$ yields the claimed bound.
\end{subequations}


\subsection{Proof of~\Cref{PropGoodRateDistor}}
\label{SecProofPropGoodRateDistor}

For any encoding $\Prob_{\Zhat \mid \Zvar, X_T}$, the chain rule for
mutual information gives
\begin{align}
  \label{EqnLao}
  \ShanInfo(\Zvar; X_\Tinit \mid X_T) & \leq \ShanInfo(\Zvar; \Zhat
  \mid X_T) + \ShanInfo(\Zvar; X_\Tinit \mid \Zhat, X_T).
\end{align}
Now let $\{\Bvar_\lambda\}_{\lambda \geq 0}$ be a standard
$\usedim$-dimensional Brownian motion independent of $\Zvar$, and
consider the process $\Yvar_\lambda = \lambda \Zvar + \Bvar_\lambda$.
We form the coupling $X_T = T \Yvar_{1/T}$ and $X_\Tinit = \Tinit
\Yvar_{1/\Tinit}$, and generate $\Zhat$ from the conditional
distribution $\Prob_{\Zhat \mid \Zvar, X_T}$ using auxiliary
randomness independent of the future Brownian increments.  Conditional
on $X_T$, the observations $\Ylam$ for $\lam \in [1/\Tfinal,
  1/\Tinit]$ correspond to additional independent Gaussian
observations of $\Zvar$.  Consequently, we can write
\begin{align*}
  \ShanInfo(\Zvar; X_\Tinit \mid \Zhat, X_T) \; \stackrel{(i)}{=} \;
  \frac{1}{2} \int_{1/T}^{1/\Tinit} \Exs \left\| \Zvar - \Exs[\Zvar
    \mid \Zhat, X_T, \Yvar_\lambda] \right\|_2^2 \, d\lambda \; &
  \stackrel{(ii)}{\leq} \; \frac{1}{2} \int_{1/T}^{1/\Tinit} \Exs
  \| \Zvar - \Zhat \|_2^2 \, d\lambda \\
  & = \frac{1}{2} \big \{ \frac{1}{\delta} - \frac{1}{T} \big \}
  \Exs \| \Zvar - \Zhat \|_2^2,
\end{align*}
where step (i) follows from the conditional I--MMSE
identity~\cite{GuoEtAl05}; and step (ii) follows since $\Zhat$ is an
admissible estimator of $\Zvar$, and the conditional expectation is
the optimal estimator.  Combining this bound with our initial
decomposition~\eqref{EqnLao} yields the claim.


\subsection{\texorpdfstring{Proof of~\Cref{PropPoincare}}%
  {Proof of the Poincare proposition}}
\label{SecProofPropPoincare}

Letting $g_u(z) = \inprod{u}{z}$ be a linear function, the \Poincare
condition~\eqref{EqnDefnPoincare} implies that
\begin{align*}
  u^T \CovZ u \; = \; \var(g_u(\Zvar)) & \leq \Pcar \Exs \|\nabla
  g_u(Z)\|_2^2 = \Pcar \|u\|_2^2,
\end{align*}
where we have used the fact that $\nabla g_u(z) = u$.  Since this
bound holds for vectors $u \in \real^d$, it follows that $\CovZ
\preceq \Pcar \IdMat$.

Since $\Exs Z = 0$, for the Gaussian initialization $\Qprob_\Tfinal
\sim \Normal(0, \Tfinal \IdMat)$, we have $\KL(\Prob_\Tfinal \|
\Qprob_\Tfinal) \leq \frac{\Tr \CovZ}{2 \Tfinal} \leq
\frac{\varepsilon}{4}$.  Moreover, the \Poincare inequality gives
$\frac{\hfun(\Tfinal)}{\Tfinal} \leq \frac{\Pcar \usedim}{\Tfinal}
\leq \frac{\varepsilon}{2}$.  Consequently,
applying~\Cref{PropCovariance} yields the upper bound $\Hinfo(\Tinit,
\Tfinal) \leq \frac{\usedim}{2} \log\left(1 +
\frac{\Pcar}{\Tinit}\right) + \frac{\varepsilon}{2}$.  We now choose
$N \defn \left\lceil \frac{4}{\varepsilon} \left\{ \usedim \log\left(1
+ \frac{\Pcar}{\Tinit}\right) + 1 \right\}
\log\left(\frac{\Tfinal}{\Tinit}\right) \right\rceil$.  Since
$\varepsilon \in (0,1)$, the single-block discretization bound gives
$\frac{2 \Hinfo(\Tinit, \Tfinal) \log(\Tfinal/\Tinit)}{N} \leq
\frac{\varepsilon}{4}$.  Combining this with the initialization bound,
we find that $\KL(\Prob_\Tinit \| \Qprob) \leq \frac{\varepsilon}{4} +
\frac{\varepsilon}{4} = \frac{\varepsilon}{2}$.  Thus, the
single-block procedure with this choice of $N$ samples from
$\Prob_\Tinit$ with iteration complexity
\begin{align}
 \label{EqnPoincareInter}
\NscoreTot(\varepsilon) & \leq 1 + \frac{4}{\varepsilon} \; \Big \{
\usedim \log \left(1 + \frac{\Pcar}{\Tinit} \right) + 1 \Big \}
\log \left(\frac{\Tfinal}{\Tinit} \right).
\end{align}

Recall that the corollary statement applies to the KL divergence
$\KL(\Prob_\Zvar \| \Qprob)$, whereas the procedure thus far bounds
$\KL(\Prob_\Tinit \| \Qprob)$.  The natural intermediate object is
the divergence $\KL(\Prob_\Zvar \| \Prob_\Tinit)$, but
unfortunately, the KL divergence does not satisfy a triangle
inequality.  The following alternative result provides a way to
link these three objects:

\mygraybox{
  \begin{lemma}[KL transfer]
\label{LemKLTransfer}    
Consider probability distributions $(\Prob, \Qprob, \Sprob)$ such that
$\frac{d\Prob}{d\Sprob} \leq b \quad \Sprob\text{-a.s.}$ for some $b
\in [1, \infty)$. Then the square-root KL divergence satisfies the
  weak triangle inequality
\begin{align}
\label{EqnKLTransfer}  
  \sqrt{\KL(\Prob \| \Qprob)} & \leq \sqrt{\KL(\Prob \| \Sprob)} +
  \sqrt{b \KL(\Sprob \| \Qprob)}.
\end{align}
\end{lemma}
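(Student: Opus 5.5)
The plan is to prove the equivalent family of weighted inequalities
\begin{align*}
\KL(\Prob \| \Qprob) \; \leq \; (1 + \lambda)\, \KL(\Prob \| \Sprob) + \big(1 + \tfrac{1}{\lambda}\big)\, b\, \KL(\Sprob \| \Qprob) \qquad \mbox{for every $\lambda > 0$,}
\end{align*}
and then choose $\lambda = \sqrt{b \KL(\Sprob\|\Qprob)/\KL(\Prob\|\Sprob)}$. With this $\lambda$ the right-hand side equals $\big(\sqrt{\KL(\Prob\|\Sprob)} + \sqrt{b\KL(\Sprob\|\Qprob)}\big)^2$, which is exactly the square of the bound~\eqref{EqnKLTransfer}. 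The degenerate cases where one of the two divergences is $0$ or $+\infty$ are handled by taking limits in $\lambda$. We may assume $\Prob \ll \Qprob$; otherwise $\Sprob \not\ll \Qprob$ because $\Prob \ll \Sprob$, so $\KL(\Sprob\|\Qprob) = \infty$ and there is nothing to prove.

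Write $f \defn d\Prob/d\Sprob \in [0,b]$ and $r \defn d\Qprob/d\Sprob$, where $r$ is the density of the $\Sprob$-absolutely continuous part of $\Qprob$; the singular part only decreases the integrals below. The chain rule for log-likelihood ratios gives the exact decomposition
\begin{align*}
\KL(\Prob\|\Qprob) = \KL(\Prob\|\Sprob) + \Exs_\Sprob\big[f \log(1/r)\big].
\end{align*}
Everything therefore reduces to bounding the cross term $\Exs_\Sprob[f\log(1/r)]$ by $\lambda \KL(\Prob\|\Sprob) + (1+\lambda^{-1}) b \KL(\Sprob\|\Qprob)$.

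I will write both divergences in $\Sprob$-integral form with nonnegative integrands. For $\KL(\Prob\|\Sprob)$ the integrand is $f\log f - f + 1 \geq 0$. For $\KL(\Sprob\|\Qprob)$ it is $\log(1/r) + r - 1 \geq 0$, valid because $\Exs_\Sprob r \leq 1$. The crux is then a pointwise inequality in two variables: for all $0 \leq f \leq b$ and $r > 0$,
\begin{align*}
f\log(1/r) \;\leq\; \lambda\,(f\log f - f + 1) + (1+\lambda^{-1})\, b\,(\log(1/r) + r - 1) + c(f,r),
\end{align*}
where $c(f,r)$ is a correction term with $\Exs_\Sprob[c(f,r)] \leq 0$. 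The natural choice is a multiple of $(f-1)$, which integrates to zero, possibly combined with a multiple of $(r-1)$, which integrates to a nonpositive quantity. Integrating the pointwise bound against $\Sprob$ then closes the argument.

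To find the pointwise bound, I would split into two regimes. If $r \geq 1$, then $f\log(1/r) \leq 0$ and the bound is essentially trivial. If $r < 1$, set $u = \log(1/r) > 0$. I would maximize the left minus right side over $f \in [0,b]$ by calculus: the left side is linear in $f$, and $\lambda f\log f$ is convex in $f$, so the maximizer is explicit. The resulting bound has to be checked against $(1+\lambda^{-1}) b\,(u + e^{-u} - 1)$. Here the density bound $f \leq b$ is what lets the factor $b$ absorb the linear growth of $f u$: Young's inequality in the form $fu \leq \lambda(f\log f - f + 1) + \lambda(e^{u/\lambda} - 1)$ handles large $f$ only at the price of an exponential in $u$, so the bound $f \leq b$ has to take over for large $u$.

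I expect this pointwise inequality to be the main obstacle, specifically its two extreme regimes. For small $u$, the term $u + e^{-u} - 1 \approx u^2/2$ is quadratic while $fu$ is linear, so the correction $(f-1)$ must cancel the first-order part, and the $\lambda(f-1)^2$-type curvature from $f\log f$ must dominate. For large $u$, Young's exponential is useless and one must instead use $f u \leq b u$ directly. If a single clean pointwise inequality proves elusive, I would fall back on Cauchy--Schwarz. The plan is to write $f - 1 = (\sqrt f - 1)(\sqrt f + 1)$ and pair $(\sqrt f - 1)^2$, whose integral is controlled by $\KL(\Prob\|\Sprob)$, with $(\sqrt f + 1)^2 \leq (1+\sqrt b)^2$ times a Hellinger-type function of $r$. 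The remaining step is to bound that function by the KL integrand $\log(1/r) + r - 1$.
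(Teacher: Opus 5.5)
Your overall architecture matches the paper's: the chain rule, the nonnegative integrands $\psi(f)=f\log f-f+1$ and $\phi(u)=u+e^{-u}-1$ with $u=\log(1/r)$, a pointwise bound, and a $\lambda$-weighted AM--GM that is the pointwise form of the paper's Cauchy--Schwarz step. However, the reduction you set up is false as stated.

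Put $B=(1+\lambda^{-1})b$ and $c=\alpha(f-1)+\beta(r-1)$, as you suggest. At $u=0$ your inequality reads $0\le\lambda\psi(f)+\alpha(f-1)$ for all $f\in[0,b]$. Since $\psi$ vanishes to second order at $f=1$, this forces $\alpha=0$ (for $b>1$), so the $(f-1)$ correction cannot cancel anything. At $f=1$ the left side is $u$, while $B\phi(u)=O(u^2)$. Letting $u\to 0$ from both sides therefore forces $\beta=-1$, so $c$ must equal $1-r$. Its $\Sprob$-integral $1-\mathbb{E}_{\Sprob}[r]$ is nonnegative, and strictly positive whenever $\Qprob$ has a part singular to $\Sprob$.

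The integrated claim fails as well. Take $\Prob=\Sprob$ and $\Qprob=r_0\Sprob+(1-r_0)\nu$ with $\nu\perp\Sprob$. The cross term is $\log(1/r_0)$, but your integrated bound is at most $B\{\log(1/r_0)-(1-r_0)\}=O((1-r_0)^2)$. The culprit is replacing all $B$ copies of $\KL(\Sprob\|\Qprob)$ by the lossy integrand $\phi(u)$, which discards exactly the first-order term. The repair is the paper's split $fu=u+(f-1)u$, using $\mathbb{E}_{\Sprob}[u]=\KL(\Sprob\|\Qprob)$ exactly. What then remains is the pointwise bound $(f-1)u\le\lambda\psi(f)+(B-1)\phi(u)$ for $f\in[0,b]$.

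That inequality is true, but it is precisely the step you flag as the main obstacle and leave unproved. The paper supplies the missing ingredient in three pieces:
\begin{itemize}
\item The scalar bounds $-t\le\sqrt{2\phi(t)}$ for $t\le 0$ and $t\le\phi(t)+\sqrt{2\phi(t)}$ for $t\ge 0$, together with $f-1\in[-1,b-1]$, give $(f-1)u\le(b-1)\phi(u)+|f-1|\sqrt{2\phi(u)}$. This absorbs the linear growth in $u$ through $f\le b$ and leaves a remainder comparable to $|f-1|\,|u|$ near $u=0$.
\item AM--GM gives $|f-1|\sqrt{2\phi(u)}\le\frac{\lambda(f-1)^2}{2b}+\frac{b}{\lambda}\phi(u)$.
\item Since $\psi''=1/f\ge 1/b$, we have $(f-1)^2\le 2b\,\psi(f)$ on $[0,b]$.
\end{itemize}
Because $b-1+b/\lambda=B-1$, these close the argument. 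The paper performs the last two steps after integrating, via Cauchy--Schwarz and $\chi^2(\Prob\|\Sprob)\le 2b\,\KL(\Prob\|\Sprob)$, which is equivalent to optimizing your $\lambda$.

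Your Hellinger fallback, as described, would not reach the stated constant. The function of $r$ must behave like $u^2\approx 2\phi(u)$ near $u=0$. With $(\sqrt f+1)^2\le(1+\sqrt b)^2$, the cross coefficient then becomes $\sqrt2(1+\sqrt b)$ instead of $2\sqrt b$, which is too large whenever $b<3+2\sqrt2$.
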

}
\noindent See~\Cref{SecProofLemKLTransfer} for the proof. \\

We apply~\Cref{LemKLTransfer} with the choices $\Prob = \Prob_\Zvar$,
$\Sprob = \Prob_\Tinit$ and $\Qprob$ corresponding to the sampler
output. With the specified iteration count~\eqref{EqnPoincareInter},
we have $\KL(\Prob_\Tinit \| \Qprob) \leq \varepsilon/2$, so that
the bound~\eqref{EqnKLTransfer}  implies that
\begin{align}
\label{EqnLooneyBean}  
  \sqrt{\KL(\Prob_Z \| \Qprob)} & \leq \sqrt{\KL(\Prob_Z \|
    \Prob_\Tinit)} + \sqrt{b \varepsilon/2} \; \leq \; \sqrt{\log b} +
  \sqrt{b \varepsilon/2} \qquad \mbox{where $b = \sup_{A}
    \Prob(A)/\Prob_\Tinit(A)$.}
\end{align}
Thus, it remains to bound $b = b(\Tinit)$ and choose $\Tinit$
appropriately.  By definition of $p_\Tinit$, we can write
\begin{align*}
p_\Tinit(x) = \Exs_W \big[p_Z(x - \sqrt{\Tinit} W) \big] \; = \;
\Exs_W \big[e^{-f(x - \sqrt{\Tinit} W)}] \qquad \mbox{where $f(z) \defn
  - \log p_Z(z)$.}
\end{align*}
Applying the assumed bound~\eqref{EqnLsmooth} with $u = -\sqrt{\Tinit}
W$ yields
\begin{align*}
  p_\Tinit(x) \geq \Exs_W \big[ e^{-f(x) + \sqrt{\Tinit}
      \inprod{\nabla f(x)}{W} - \tfrac{\Tinit L}{2} \|W \|_2^2} \big]
& \stackrel{(i)}{=} p_\Zvar(x) (1 + L \Tinit)^{-\usedim/2} \exp \Big
    \{ \frac{\Tinit \|\nabla f(x)\|_2^2}{2 (1 + L \Tinit)} \Big \} \\
& \stackrel{(ii)}{\geq}  p_\Zvar(x) (1 + L \Tinit)^{-\usedim/2},
\end{align*}
where step (i) follows by computing the expectation over the standard normal $W$; and
step (ii) follows since the exponential term is at least $1$.

Consequently, our required density ratio bound holds with $b(\Tinit) =
(1 + L \Tinit)^{\frac{\usedim}{2}}$, so that the bound~\eqref{EqnLooneyBean} implies
that
\begin{align}
  \label{EqnNearFinal}
  \sqrt{\KL(\Prob_Z \| \Qprob)} & \leq \sqrt{\tfrac{\usedim}{2}
  \log(1 + L \Tinit)} + \sqrt{(1 + L \Tinit)^{\frac{\usedim}{2}} \;
  \frac{\varepsilon}{2}}.
\end{align}
Setting $\Tinit = \frac{\varepsilon}{8 L \usedim}$ for any
$\varepsilon \in (0,1)$, the elementary bounds $\log(1 + x) \leq x$
and $(1 + x)^r \leq e^{r x}$ imply that
\begin{align*}
  \log b = \tfrac{\usedim}{2} \log(1 + L \Tinit) & \leq
  \frac{\varepsilon}{16},
  \qquad\mbox{and}\qquad
  b = (1 + L \Tinit)^{\frac{\usedim}{2}} \leq
  e^{\frac{\varepsilon}{16}}.
\end{align*}
Consequently, we have the upper bound
\begin{align*}
  \sqrt{\KL(\Prob_Z \| \Qprob)} & \leq \sqrt{\frac{\varepsilon}{16}} +
  e^{\frac{\varepsilon}{32}} \sqrt{\frac{\varepsilon}{2}} \leq \left\{
  \frac{1}{4} + \frac{e^{\frac{1}{32}}}{\sqrt{2}} \right\}
  \sqrt{\varepsilon} \leq \sqrt{\varepsilon},
\end{align*}
as claimed.

\subsubsection{\texorpdfstring{Proof of~\Cref{LemKLTransfer}}%
  {Proof of the KL transfer lemma}}
\label{SecProofLemKLTransfer}
By the KL chain rule, we have $\KL(\Prob \| \Qprob) = \KL(\Prob \|
\Sprob) + \KL(\Sprob \| \Qprob) + \Exs_\Sprob[(f-1) u]$, where $u =
\log(d \Sprob/d \Qprob)$ and $f = d \Prob/d \Sprob$.  Suppose that we
can show that
\begin{align}
\label{EqnFlatWhite}
  \Exs_\Sprob[(f-1) u] & \leq (b-1) \KL(\Sprob \| \Qprob) + 2 \sqrt{ b
    \, \KL (\Sprob \| \Qprob) \KL(\Prob \| \Sprob)}.
\end{align}
Then combining the preceding KL chain-rule identity
with~\eqref{EqnFlatWhite} yields
\begin{align*}
  \KL(\Prob \| \Qprob) & \leq \KL(\Prob \| \Sprob) + b \KL(\Sprob \|
\Qprob) + 2 \sqrt{ b \, \KL (\Sprob \| \Qprob) \KL(\Prob \| \Sprob)}
\; = \; \Big( \sqrt{\KL(\Prob \| \Sprob)} + \sqrt{b \KL(\Sprob \|
  \Qprob)} \Big)^2,
\end{align*}
as claimed.

\paragraph{Proof of the bound~\eqref{EqnFlatWhite}:}

Defining $\phi(t)=t + e^{-t} - 1$, we observe
\begin{align*}
  \Exs_\Sprob\big[ \phi\big( \log \tfrac{d \Sprob}{d \Qprob} \big) \Big] & = \KL(\Sprob
  \| \Qprob) + \Exs_\Sprob[ d \Qprob/d \Sprob] -1 \leq \KL(\Sprob \|
  \Qprob).
\end{align*}
Next we observe the bounds
\begin{align*}
-t \leq \sqrt{2\phi(t)} \quad\mbox{for $t \leq 0$, and} \quad t \leq
\phi(t) + \sqrt{2\phi(t)} \quad\mbox{for $t \geq 0$}.
\end{align*}
Recalling that $f = d \Prob/d \Sprob$, our assumption implies that the
function $f-1$ takes values in the interval $[-1, b-1]$.
Consequently, recalling the shorthand $u = \log (d \Sprob/ d \Qprob)$,
we have
\begin{align*}
(f-1) u \leq (b-1) \phi(u) + |f-1|\sqrt{2\phi(u)} 
\end{align*}
in a pointwise sense.  Taking expectations over $\Sprob$, we find that
\begin{align*}
  \Exp_\Sprob[(f-1)u] \leq (b-1) \KL(\Sprob \| \Qprob) +
  \sqrt{\underbrace{\Exs_\Sprob[(f-1)^2]}_{\equiv \chi^2(\Prob \|
      \Sprob)}} \; \sqrt{2 \underbrace{\Exs_\Sprob[\phi(u)]}_{\equiv
      \leq \KL(\Sprob \| \Qprob)}}
\end{align*}

To complete the proof, we need to show that $\chi^2(\Prob \| \Sprob)
\leq 2 b \KL(\Prob \| \Sprob)$.  To do so, we define the
function \mbox{$\psi(t) = t \log t - t +1$,} and observe that $f = d
\Prob/d \Sprob$ satisfies
\begin{align*}
  \Exs_\Sprob[\psi(f)] & = \Exs_\Sprob[ \tfrac{d \Prob}{d \Sprob} \log
    \tfrac{d \Prob}{d \Sprob}] - \Exs_\Sprob[\tfrac{d \Prob}{d
      \Sprob}] + 1 \; = \; \KL(\Prob \| \Sprob).
\end{align*}
Moreover, observe that $\psi(1) = \psi'(1) = 0$, and $\psi''(t) = 1/t
\geq 1/b$ for $t \in [0,b]$.  Since $f$ takes values in $[0,b]$, a
Taylor series expansion around $t = 1$ implies that $\frac{1}{2 b} (f
- 1)^2 \leq \psi(f)$ in a pointwise sense.  Taking expectations yields
$\frac{1}{2 b} \chi^2( \Prob \| \Sprob) \leq \KL (\Prob \| \Sprob)$ as
required.


\section{Proofs on accuracy of $\Btot$-block approximations}

In this section, we collect together the proofs of our results
on $\Btot$-block approximation.

\subsection{Proof of~\Cref{LemTwoPointSep}}
\label{SecProofLemTwoPointSep}

Without loss of generality (via a rescaling argument), it suffices to
prove the result for $\Tinit = 1$.  Write the original heat channel as
$X_t = R U + \sqrt{t} G$, where $U$ is Rademacher and $G \sim
\Normal(0, 1)$, and introduce the normalized binary channel $Y_s = U +
\sqrt{s} G$.  Define the MSE function
\begin{align*}
  m(s) & \defn \Exs\left[ \left( U - \Exs[U \mid Y_s] \right)^2
    \right], \quad \mbox{as well as} \quad \psi(u) \defn \frac{1}{2}
  m'(e^u).
\end{align*}
Letting $\hfun$ denote the MSE for the original channel, we then have
the relations $\hfun(t) = R^2 m(t/R^2)$, and $\hfun'(t) = m'(t/R^2)$.
Since $\qdens(r) = \hfun'(e^r)$ when $\Tinit = 1$, it follows that
$\psi(r - 2 \log R) = \frac{1}{2} \qdens(r)$.  Thus, using the
definitions of $\Hinfo$ and $\PartHfine$, we have
\begin{align}
\label{EqnDelToro}
  \Hinfo(a, b) & = \int_{\log(a/R^2)}^{\log(b/R^2)} \psi(u) \, du,
  \qquad \PartHfine(1, 2 R^2) = \left( \int_{-2 \log R}^{\log 2}
  \sqrt{\psi(u)} \, du \right)^2.
\end{align}

Now by the Gaussian-channel MMSE derivative
identity~\cite[Prop.~9]{GuoEtAl11}, applied in SNR
coordinates at SNR $1/s$, we have the relation $m'(s) = \frac{1}{s^2}
\Exs \big[ \Var\big( U \mid U + \sqrt{s} G \big)^2 \big]$.  Since $m(s) =\Exs \big[ \Var(U \mid Y_s) \big] \in [0,1]$, we have
\begin{align}
\label{EqnPilot}  
  0 \leq m'(s) \leq \frac{m(s)}{s^2}, \qquad 0 \leq \psi(u) \leq
  \frac{1}{2} m(e^u) e^{-2u}.
\end{align}
Moreover, the MSE function can be upper bounded using the estimator
$\widehat U \defn \operatorname{sign}(Y_s)$, whence
\begin{align*}
  m(s) & \stackrel{(i)}{\leq} \Exs\left[ \left( U - \widehat U \right)^2 \right] = 4
  \Phi(-1/\sqrt{s}) \; \stackrel{(ii)}{\leq} C \sqrt{s} e^{-1/(2s)}
\end{align*}
where step (i) follows since the conditional expectation is optimal;
and step (ii) follows from the Gaussian Mills-ratio
inequality~\cite{Gor41} $\Phi(-x) \leq \phi(x)/x$ for $x > 0$.
Combined with the first bound in equation~\eqref{EqnPilot}, we also
have $m'(s) \leq C s^{-3/2} e^{-1/(2s)}$ for $s \in (0, 1)$, and hence
\begin{align*}
  \sqrt{\psi(u)} & \leq C e^{-3u/4} \exp\left( -\frac{e^{-u}}{4}
  \right) \quad \mbox{for $u \leq 0$.}
\end{align*}
The substitution $z = e^{-u}$ shows that the super-exponential term
dominates the prefactor, and the right-hand side and its square are
integrable on $(-\infty, 0]$.

On $[0, \log 2]$, the derivative identity makes $\psi$ finite
and continuous.  It also implies $\psi(u) > 0$ for every finite $u$,
since the posterior variance of the non-degenerate Rademacher variable
is strictly positive at every finite noise level, so that
\begin{align*}
  0 < \int_{-\infty}^{\log 2} \sqrt{\psi(u)} \, du < \infty, \qquad 0
  < \int_{-\infty}^{\log 2} \psi(u) \, du < \infty.
\end{align*}
Using the integral representations~\eqref{EqnDelToro} and monotone
convergence, we conclude that
\begin{align*}
  \PartHfine(1, 2 R^2) & = \Theta(1), \quad \mbox{and} \quad \Hinfo(1,
  2 R^2) = \Theta(1).
\end{align*}
This completes the proof of the fine-partition claim. To prove the
single-block claim, we observe that
\begin{align*}
  \PartComp([1, 2 R^2]) & = \log(2 R^2) \; \Hinfo(1, 2 R^2) =
  \Theta(\log R).
\end{align*}

\paragraph{Two-block claim:}
Introduce the shorthand $L_R \defn \log(R^2)$ so that $\tau_R =
R^2/L_R$.  The two-block partition complexity is given by
$\PartComp(\Partition_2) = \big( \sqrt{\Sinfo_1 \Hinfo_1} +
\sqrt{\Sinfo_2 \Hinfo_2} \big)^2$, where
\begin{align*}
  \Sinfo_1 & = \log \tau_R = 2 \log R - \log L_R, \qquad \Sinfo_2 =
  \log(2 R^2/\tau_R) = \log(2 L_R) = \Theta(\log\log R).
\end{align*}
and $\Hinfo_1 \defn \Hinfo(1, \tau_R)$ and $\Hinfo_2 \defn
\Hinfo(\tau_R, 2 R^2)$.  It suffices to show that $\Sinfo_1 \Hinfo_1 =
o(1)$ and $\Sinfo_2 \Hinfo_2 = \Theta(\log \log R)$.

In terms of $\psi$, the two increments can be expressed as
\begin{align*}
  \Hinfo_1 & = \int_{-2 \log R}^{-\log L_R} \psi(u) \, du, \qquad
  \Hinfo_2 = \int_{-\log L_R}^{\log 2} \psi(u) \, du.
\end{align*}
For the first block, the small-noise estimate and the substitutions $s
= e^u$ and $z = 1/(2s)$ give
\begin{align*}
  \Hinfo_1 & \leq c \int_{R^{-2}}^{1/\log(R^2)} s^{-5/2} e^{-1/(2s)}
  \, ds \leq c \int_{L_R/2}^{R^2/2} z^{1/2} e^{-z} \, dz \leq c
  \frac{\sqrt{\log R}}{R}.
\end{align*}
Here the last inequality uses $\int_a^\infty z^{1/2} e^{-z} \, dz \leq
c a^{1/2} e^{-a}$ for $a$ sufficiently large.  Since $\Sinfo_1 =
\Theta(\log R)$, this gives $\Sinfo_1 \Hinfo_1 = o(1)$.  On the other
hand, monotone convergence, as well as the positivity and
integrability proved above, guarantee that $\Hinfo_2 \longrightarrow
\int_{-\infty}^{\log 2} \psi(u) \, du \in (0, \infty)$.  Putting
together the pieces yields $\Sinfo_2 \Hinfo_2 = \Theta(\log\log R)$,
which completes the proof.


\subsection{Proof of~\Cref{LemBlockAccuracy}}
\label{SecProofLemBlockAccuracy}

For a log-time partition $\Partition = \{ I_\bind \}_{\bind=0}^{\Btot
  - 1}$, where $I_\bind = [r_\bind, r_{\bind + 1}]$ and $\Sinfo_\bind
= r_{\bind + 1} - r_\bind$, we define mean and variance parameters as
\begin{subequations}
\begin{align}
\label{EqnBlockAccuracyMoments}
  \mu_\bind & \defn \frac{1}{\Sinfo_\bind} \int_{I_\bind} \fdens(r) \,
  dr, \qquad v_\bind \defn \frac{1}{\Sinfo_\bind} \int_{I_\bind} \big(
  \fdens(r) - \mu_\bind \big)^2 \, dr.
\end{align}
With this representation, we have the relation $\Hblk_\bind =
\frac{\Sinfo_\bind}{2}(\mu_\bind^2 + v_\bind)$, and hence, from the
definitions of $\PartComp(\Partition)$ and $\PartHfine$, we have an
expression
\begin{align}
\label{EqnBlockAccuracyExactGap}
  \sqrt{2 \PartComp(\Partition)} - \sqrt{2 \PartHfine(\Tinit,
    \Tfinal)} & = \sum_{\bind=0}^{\Btot - 1} \Sinfo_\bind \left(
  \sqrt{\mu_\bind^2 + v_\bind} - \mu_\bind \right) \; = \;
  \sum_{\bind=0}^{\Btot - 1} \frac{\Sinfo_\bind v_\bind}
      {\sqrt{\mu_\bind^2 + v_\bind} + \mu_\bind}.
\end{align}
\end{subequations}
for the error in the approximation at the square-root level.

For each block, the elementary range bound for a variance combined
with the definition of the modulus yields the upper bound $v_\bind \leq
\frac{1}{4} \operatorname{osc}_{I_\bind}^2(\fdens) \leq \frac{1}{4}
\omega_\fdens^2(\Sinfo_\bind)$, whereas $\sqrt{\mu_\bind^2 + v_\bind}
- \mu_\bind \leq \sqrt{v_\bind}$.  Summing these bounds in
equation~\eqref{EqnBlockAccuracyExactGap}, and using
$\sum_{\bind=0}^{\Btot - 1} \Sinfo_\bind = \Len$, gives the
bound~\eqref{EqnBlockAccuracyEqual}.

When $\fdens \geq m$, we have $\mu_\bind \geq m$, so the denominator
in equation~\eqref{EqnBlockAccuracyExactGap} is at least $2 m$.  The
variance identity $v_\bind = \frac{1}{2 \Sinfo_\bind^2} \int_{I_\bind}
\int_{I_\bind} \big( \fdens(r) - \fdens(s) \big)^2 \, dr \, ds$
implies, for every uniform block of length $h = \Len / \Btot$, that
\begin{align*}
  v_\bind \leq \frac{1}{h^2} \int_0^h (h - u) \omega_\fdens(u)^2 \,
  du.
\end{align*}
Substitution into equation~\eqref{EqnBlockAccuracyExactGap}, followed
by summation over the uniform blocks yields the
bound~\eqref{EqnBlockAccuracyIntegrated}.

Finally, the Lipschitz bound $\omega_\fdens(u) \leq \Lip(\fdens) u$
gives $\frac{\Len}{2} \omega_\fdens(h) \leq \frac{\Lip(\fdens) \Len
  h}{2} = \frac{\Lip(\fdens) \Len^2}{2 \Btot}$, which proves the
bound~\eqref{EqnBlockAccuracyLipschitz}(a), while
\begin{align*}
  \frac{\Len}{2 m h^2} \int_0^h (h - u) \omega_\fdens(u)^2 \, du &
  \leq \frac{\Lip(\fdens)^2 \Len}{2 m h^2} \int_0^h (h - u) u^2 \, du
  \; = \; \frac{\Lip(\fdens)^2 \Len h^2}{24 m} = \frac{\Lip(\fdens)^2
    \Len^3}{24 m \Btot^2}
\end{align*}
thereby establishing~\eqref{EqnBlockAccuracyLipschitz}(b).


\section{Guarantees for approximate structure}
\label{SecApproxStructure}
In this section, we describe various extensions of the examples
following~\Cref{PropGoodRateDistor}, in particular allowing for
approximate structure of various types.

\subsection{Consequences for approximate manifold structure}
Assuming that $\Zvar$ lies exactly on a manifold is a rather brittle
condition; it is more realistic to assume that $\Zvar$ can be
approximated by a variable on a manifold.  Concretely, letting
$\Wass_2$ denote the Wasserstein-$2$ distance, suppose that $\Zvar$
lies at $\Wass_2$-distance at most $\eta$ from a compact
$\mdim$-dimensional manifold $\Mani$.  By definition of the
$\Wass_2$-distance, there exists a coupling $(\Zvar, \Ztil)$ with a
random variable $\Ztil$ supported on $\Mani$ such that $\Exs \|\Zvar -
\Ztil\|_2^2 \leq \eta^2$.  Now let $\Zhat$ take values over a
$t$-accurate covering of $\Mani$ with $M$ elements, so that $\log M
\leq c_0 + \mdim \log(1 + c_1/t)$ by the manifold structure.  As in
our earlier argument, we have $\ShanInfo(\Zvar; \Zhat) \leq \log M$,
and moreover, we have $\Exs \|\Zvar - \Zhat\|_2^2 \leq 2 \Exs \|\Zvar
- \Ztil\|_2^2 + 2 \Exs \|\Ztil - \Zhat\|_2^2 \leq 2 \big( \eta^2 + t^2
\big)$.  We then apply the inequality~\eqref{EqnGoodRateDistorWeak}
with these bounds, thereby finding that
\begin{align}
\label{EqnApproxManifold}
\Hinfo(\Tinit, \Tfinal) & \leq c_0 + \inf_{t > 0} \Big \{\mdim \log
\big(1 + (c_1/t) \big) + \frac{t^2}{\delta} \Big \} +
\frac{\eta^2}{\delta}.
\end{align}
When $\eta = 0$, we recover the exact manifold case, whereas in
general, we pay an additional price that scales as $\eta^2/\delta$.

\subsection{Consequences for approximate Gaussian mixture models}
\label{SecApproxStructureMix}
We begin by observing a natural approximation-theoretic guarantee provided by
our theory.  Consider a discrete distribution $\pi$ supported on $K$
points with entropy $\Ent(\pi)$.  Applying the
bound~\eqref{EqnGoodRateDistorWeak} guarantees that
\begin{subequations}
\begin{align}
  \label{EqnInitalBike}
  \Hinfo(\delta, T) & \leq \Ent(\pi) + \frac{1}{2 \delta}
  \Wass_2^2(\Prob_\Zvar, \nu) + \frac{\hfun(T)}{T}.
\end{align}
Consequently, if $\Prob_\Zvar$ is approximated to $\Wass_2$-accuracy
$\eta$ by a $K$-point distribution, then we have
\begin{align}
  \label{EqnBike}
  \Hinfo(\delta, T) & \leq \Ent(\pi) + \frac{\eta^2}{2 \delta} +
  \frac{\hfun(T)}{T}.
\end{align}
\end{subequations}
Now consider sampling from the distribution $\Prob_\delta$, which is a
convolved version of $\Prob_Z$.  Our theory then guarantees that we
can sample $\Xhat_N \sim \Qprob_\delta$ such that $\KL(\Prob_\delta \|
\Qprob_\delta) \leq \varepsilon$ using $N \asymp
\Hinfo(\delta, T)/\varepsilon$ iterations, where $\Hinfo$ satisfies the
upper bound~\eqref{EqnBike}.  Thus, we have obtained an
approximation-theoretic generalization of our
guarantee~\eqref{EqnGaussMixNscore} for sampling from exact Gaussian
mixture distributions, again with the excess term scaling as
$\eta^2/\Tinit$.


\end{document}